\newtheorem{thm}{Theorem}[section]
\newtheorem{lem}[thm]{Lemma}
\newtheorem{prop}[thm]{Proposition}
\theoremstyle{definition}
\theoremstyle{remark}
\newtheorem{rem}[thm]{Remark}
\numberwithin{equation}{section}
\newcommand{\R}{\mathbb{R}}
\begin{document}
\title[]
{Optimal convergence rate of nonrelativistic limit for the nonlinear pseudo-relativistic equations}
\author{Woocheol Choi}
\address{W. Choi, School of Mathematics, Korea Institute for Advanced Study, Seoul 02455, Korea}
\email{wchoi@kias.re.kr}

\author{Younghun Hong}
\address{Y. Hong, Department of mathematics, Yonsei University, Seoul 03722, Korea}
\email{younghun.hong@yonsei.ac.kr}

\author{Jinmyoung Seok}
\address{J. Seok, Department of Mathematics, Kyonggi University, Suwon 16227, Korea}
\email{jmseok@kgu.ac.kr}

\subjclass[2010]{Primary 35J60, 35J10}

\maketitle
\begin{abstract}
In this paper, we are concerned with the nonrelativistic limit of the following pseudo-relativistic equation with Hartree nonlinearity or power type nonlinearity
\[
\left(\sqrt{-\hbar^2c^2 \Delta +m^2c^4} - mc^2 \right) u + \mu u = \mathcal{N}(u),
\]
where $c$ denotes the speed of light. 
We prove that the ground states of this equation converges to the ground state of its nonrelativistic counterpart
\[
-\frac{\hbar^2}{2m}\Delta u + \mu u = \mathcal{N}(u)
\]
with an explicit convergence rate $1/c^2$ in arbitrary order as $c \to \infty$. Moreover, we show that this rate is optimal.

\end{abstract}

\section{introduction}

\subsection{Setup of the problem}
We consider the pseudo-relativistic equation
\begin{equation}\label{eq-i1}
i\hbar \partial_t\psi = (\sqrt{-\hbar^2c^2 \Delta + m^2 c^4}-mc^2)\psi -\mathcal{N}(\psi),
\end{equation}
where
$$\psi=\psi(t,x):\mathbb{R}\times\mathbb{R}^n\to\mathbb{C}$$
is the wave function, $n \geq 1$ is the space dimension, $\hbar$ is the reduced Planck constant, $c > 0$ denotes the speed of light and $m > 0$ represents the particle mass. The operator $\sqrt{-\hbar^2c^2 \Delta + m^2 c^4}$ is defined as the Fourier multiplier with symbol $\sqrt{\hbar^2c^2|\xi|^2 + m^2 c^4}$. The nonlinear term $\mathcal{N}(\psi)$ is assumed to be either of the power-type
$$\mathcal{N}(\psi)=|\psi|^{p-2}\psi$$
or of the Hartree-type
$$\mathcal{N}(\psi)=(|x|^{-1}*|\psi|^2)\psi.$$
The equation \eqref{eq-i1} is referred as the pseudo-relativistic nonlinear Schr\"odinger equation (NLS) (or nonlinear Hartree equation (NLH), respectively) when the nonlinearity is of the power- type (or the Hartree-type, respectively). Throughout the paper, we always assume that
$$n \geq 1\textup{ and }2< p <\tfrac{2n}{n-1}$$
$(\textrm{we set}~\tfrac{2n}{n-1}=\infty~ \textrm{when}~n=1)$ for the power-type nonlinearity, which makes $\mathcal{N}(u)$ be $H^{1/2}(\R^n)$ subcritical, while we assume that $n = 3$ when we refer $\mathcal{N}(u)$ to the Hartree nonlinearity.

This model is considered as a relativistic correction to the nonrelativistic counterpart, because by the Taylor series expansion of the symbol 
\begin{align*}
\sqrt{\hbar^2c^2|\xi|^2 + m^2 c^4}-mc^2&=mc^2\Big(\sqrt{1+\frac{\hbar^2|\xi|^2}{m^2c^2}}-1\Big)=\frac{\hbar^2|\xi|^2}{2m}-\frac{\hbar^4|\xi|^4}{8m^3c^2}+\cdots,
\end{align*}
the nonrelativistic equation
\begin{equation}\label{eq-i1''}
i\hbar \partial_t\psi =-\frac{\hbar^2}{2m}\Delta\psi -\mathcal{N}(\psi)
\end{equation}
formally approximates the equation \eqref{eq-i1} in the nonrelativistic regime
$$|\mathbf{p}|=\hbar|\xi|\ll mc.$$
Physically, the equation \eqref{eq-i1} is derived as a mean-field limit of a system of identical relativistic spin-$0$ bosons. It also provides a good approximation to the Bethe-Salpeter formalism \cite{CLM, Bethe-Salpeter}. In particular, the dynamics of boson stars is described by the three-dimensional pseudo-relativistic NLH, namely the \textit{boson star equation}. We refer \cite{ES, LT, LY} for rigorous derivation and \cite{KnPi, JLee} for the rate of convergence toward the mean-field limit. Because of the theoretical and experimental importance of the object, the dynamical study on the boson star equation also has been received a great attention past years \cite{FJL, FL1, FL2, HeLe, L1, LeMa, Pusateri}.

For consistency of the theory, an important question is whether the limit of a sequence of solutions to \eqref{eq-i1} as $\frac{mc}{\hbar}\to\infty$, called the \textit{nonrelativistic limit} if it exists, indeed solves the nonrelativistic equation. This kind of problem has been investigated in various physical settings \cite{BMS, BMS2, EsSe, MNO, MNO2, MN, MN2, MN3, MN4,Nakanishi, Ts}. A particular interest is on the limit along special solutions such as ground state solutions \cite{CS, EsSe,L2} and scattering solutions \cite{Nakanishi}.

In this paper, we are concerned with the limiting procedure of ground states for the pseudo-relativistic equation \eqref{eq-i1}. To introduce the ground states, we insert a standing wave ansatz $\psi(t,x) = e^{i\frac{\mu}{\hbar} t} u(x)$ in \eqref{eq-i1} and \eqref{eq-i1''} to have the time-independent pseudo-relativistic NLS (or NLH)
\begin{equation}\label{eq-elliptic}
(\sqrt{-\hbar^2c^2\Delta+m^2c^4}-mc^2)u +\mu u=\mathcal{N}(u),
\end{equation}
and the time-independent nonrelativistic NLS (or NLH)
\begin{equation}\label{eq-elliptic'}
-\frac{\hbar^2}{2m}\Delta u + \mu u = \mathcal{N}(u).
\end{equation}
From here, we normalize the equations and assume that $m=\frac{1}{2}$, $\mu=1$ and $c\to\infty$ for simplicity of the exposition. That is, we proceed with the pseudo-relativistic NLS (or NLH) of the form, 
\begin{equation}\label{pseudo-relativistic equation}
P_c(D)u=\mathcal{N}(u),
\end{equation}
where $P_c(D)$ is the pseudo-differential operator with symbol 
\begin{equation}\label{symbol}
P_c(\xi)=\sqrt{c^2|\xi|^2+\tfrac{1}{4}c^4}-\tfrac{c^2}{2}+1,
\end{equation}
and the nonrelativistic NLS (or NLH) of the form, 
\begin{equation}\label{nonrelativistic equation}
-\Delta u+u=\mathcal{N}(u).
\end{equation}
Indeed, the pseudo-relativistic equation is not scaling-invariant, so normalization may change the parameter $c$. However, since we take the nonrelativistic limit ($\frac{mc}{\hbar}\to\infty$), it will not affect our main result.

Now we state the notion of ground states and relevant results. In the nonrelativistic case, we associate the equation \eqref{nonrelativistic equation} with the functional
\begin{equation}\label{eq-E'}
I_\infty(u):= \frac{1}{2} \int_{\mathbb{R}^n}|\nabla u|^2dx-\frac{1}{p}\int_{\mathbb{R}^n} \mathcal{N}(u)\bar{u}dx, \qquad u \in H^1(\R^n),
\end{equation}
where $p=4$ for the Hartree nonlinearity. Then, we say that a critical point $u_\infty \in H^1(\R^n)$ of the functional $I_\infty$ is a \textit{ground state} to \eqref{eq-elliptic'} if 
$$I_\infty(u_\infty) = \min\Big\{ I_\infty(v)  ~|~ v \in H^1(\mathbb{R}^n) \setminus \{0\},\, I_\infty'(v) = 0\Big\},$$
where $I_\infty'$ denotes the Fr\'echet derivative of $I_\infty$. Existence of such a ground state is a classical result, see Berestycki-Lions \cite{BL} for NLS, and Lieb \cite {Lieb} for NLH for instance. Its uniqueness and nondegeneracy are established by Kwong \cite{Kw} for NLS, and Lenzmann \cite{L2} for NLH in three dimensions ($n=3$). Similarly, in the pseudo-relativistic case, we define the functional by
\begin{equation}\label{eq-E}
\begin{aligned}
I_c(u):&=\frac{1}{2}\int_{\mathbb{R}^n} P_c(D)u\overline{u}dx-\frac{1}{p}\int_{\mathbb{R}^n} \mathcal{N}(u)\bar{u}dx\\
&=\frac{1}{2}\cdot\frac{1}{(2\pi)^n} \int_{\mathbb{R}^n}(\sqrt{c^2|\xi|^2 +\tfrac{c^4}{4}}-\tfrac{c^2}{2})|{\hat u} (\xi)|^2d\xi-\frac{1}{p}\int_{\mathbb{R}^n} \mathcal{N}(u)\bar{u}dx,
\end{aligned}
\end{equation}
where $\hat{u}$ denotes the Fourier transform of $u$ and $p$ is again chosen to be $4$ for the Hartree nonlinearity. Note that $I_c$ is continuously differentiable on $H^{1/2}(\R^n)$ and its critical points are weak solutions of \eqref{eq-elliptic} and vice versa. We again say a critical point $u_c \in H^{1/2}(\mathbb{R}^n)$ of the functional $I_c$ is a \textit{ground state} to \eqref{eq-elliptic} if it satisfies
$$I_c(u_c) = \min\Big\{ I_c(v)  ~|~ v \in H^{1/2}(\mathbb{R}^n) \setminus \{0\},\, I_c'(v) = 0\Big\}.$$
Such a ground state is known to exist in \cite{LY} for NLH and Choi-Seok \cite{CS} for NLS. See also \cite{CN2,TWY,M}. 
The uniqueness of a ground state for NLH is shown by Lenzmann in \cite{L2} when the parameter $c$ is sufficiently large
by taking advantage of the nondegeneracy of the ground state of the nonrelativistic NLH, which is proved by himself in \cite{L2} as mentioned earlier. 
One also can deduce the uniqueness of a ground state for NLS at least for large $c$ from the nondegeneracy of a ground state of nonrelativistic NLS.

By using the strong maximum principle, it can be shown that both pseudo-relativistic and nonrelativistic ground states have only one sign so we may assume they are strictly positive. Therefore, with abuse of notation, we omit the absolute value sign in the nonlinear term:
\begin{equation}\label{nonlinearity'}
\mathcal{N}(u):=\left\{\begin{aligned}
&\Big(\frac{1}{|x|}*u^2\Big)u&&\textup{(Hartree nonlinearity)}\\
&u^{p-1}&&\textup{(power-type nonlinearity)}.
\end{aligned}
\right.
\end{equation}

Concerning the nonrelativistic limit, by Lenzmann \cite{L2} for NLH and  by Choi-Seok \cite{CS} for NLS, it is proved that a family of ground states of \eqref{eq-elliptic} converges to the ground state of \eqref{eq-elliptic'}. The exact statement is as the following. 
\begin{thm}[$H^1$ convergence \cite{CS,L2}]
Suppose that $n=3$ for NLH and $2<p<\frac{2n}{n-1},\, n \geq 1$ for NLS. Then, for each $c\geq 1$, a positive radially symmetric ground state solution $u_c$ of \eqref{pseudo-relativistic equation} belongs to $H^1(\R^n)$. Moreover, the sequence $\{u_c\}_{c\ge 1}$ converges to the unique positive radially symmetric ground state solution $u_\infty$ of \eqref{nonrelativistic equation} as $c \to \infty$ in the sense that
\begin{equation*}
\lim_{c \rightarrow \infty} \| u_c - u_{\infty}\|_{H^1(\mathbb{R}^n)} = 0.
\end{equation*}
\end{thm}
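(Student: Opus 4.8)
The plan is to combine a variational energy comparison with a compactness argument, exploiting three elementary properties of the symbol \eqref{symbol}, all immediate from the identity $P_c(\xi)=1+|\xi|^2\big(\sqrt{|\xi|^2/c^2+1/4}+1/2\big)^{-1}$: (i) $P_c(\xi)\le|\xi|^2+1=:P_\infty(\xi)$ for every $\xi\in\R^n$ and $c\ge1$, with $c\mapsto P_c(\xi)$ nondecreasing and $P_c(\xi)\to P_\infty(\xi)$ as $c\to\infty$, and more precisely $0\le P_\infty(\xi)-P_c(\xi)\le\min\big(|\xi|^2,\,|\xi|^4/c^2\big)$; (ii) $P_c(\xi)\gtrsim\langle\xi\rangle$ uniformly in $c\ge1$, so that $\langle P_c(D)v,v\rangle\gtrsim\|v\|_{H^{1/2}}^2$ and $P_c(D)^{-1}$ maps $H^s$ to $H^{s+1}$ with operator norm bounded uniformly in $c\ge1$; and (iii) the stronger lower bound $P_c(\xi)\gtrsim c|\xi|$ for $|\xi|>c$, which is what will produce $H^1$ regularity and a decaying high-frequency tail.

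I would first establish the regularity $u_c\in H^1(\R^n)$ together with uniform bounds. Fix a nonrelativistic ground state $u_\infty$ and use it as a competitor in the least-energy (Nehari) characterization of $m_c:=I_c(u_c)$: since the curve $t\mapsto tu_\infty$ meets the Nehari manifold of $I_c$, property (i) gives $m_c\le\max_{t>0}I_c(tu_\infty)\le\max_{t>0}I_\infty(tu_\infty)=I_\infty(u_\infty)=m_\infty$. On the Nehari manifold $I_c(u_c)=\big(\tfrac12-\tfrac1p\big)\langle P_c(D)u_c,u_c\rangle$ (and likewise for $I_\infty$), so $\langle P_c(D)u_c,u_c\rangle\le C$, hence by (ii) $\|u_c\|_{H^{1/2}}\le C$, uniformly in $c$; conversely the $H^{1/2}$-subcritical Sobolev inequality applied to the Nehari identity $\langle P_c(D)u_c,u_c\rangle=\int_{\R^n}\mathcal{N}(u_c)\bar u_c$ forces $\|u_c\|_{H^{1/2}}\ge\delta>0$ uniformly (the Sobolev constants being independent of $c$). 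A standard $L^r$-bootstrap (Moser iteration) on $u_c=P_c(D)^{-1}\mathcal{N}(u_c)$, using that $P_c(D)^{-1}$ is bounded on $L^2$ by $1$ and gains integrability / one full derivative with operator norms uniform in $c\ge1$, then yields $\|u_c\|_{L^\infty}+\|u_c\|_{H^2}\le C$ uniformly in $c$. In particular $\|\mathcal{N}(u_c)\|_{L^2}\le C$, and therefore, by (iii),
\[
\int_{|\xi|>c}|\xi|^2\,|\hat u_c(\xi)|^2\,d\xi=\int_{|\xi|>c}\frac{|\xi|^2}{P_c(\xi)^2}\,\big|\widehat{\mathcal{N}(u_c)}(\xi)\big|^2\,d\xi\;\lesssim\;\frac1{c^2}\,\|\mathcal{N}(u_c)\|_{L^2}^2\longrightarrow 0,
\]
so $u_c\in H^1(\R^n)$, with the top-order part of $\|u_c\|_{H^1}^2$ coming from $|\xi|>c$ being $O(c^{-2})$.

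Next I would pass to the limit. By the uniform $H^1$ bound, radial symmetry, and the compact embedding $H^1_{\mathrm{rad}}(\R^n)\hookrightarrow\hookrightarrow L^q(\R^n)$ for $2<q<\tfrac{2n}{n-1}$, some subsequence of $\{u_c\}$ converges weakly in $H^1$ to a radial $u_*\ge0$ and strongly in $L^q$ (for $n=1$, where radial compactness is unavailable, one uses instead the uniform pointwise decay of the $u_c$, or a concentration-compactness argument, to exclude vanishing). Testing \eqref{pseudo-relativistic equation} against $\varphi\in C_c^\infty(\R^n)$ and using $P_c(D)\varphi\to(-\Delta+1)\varphi$ in $L^2$ (dominated convergence, by (i)) together with the $L^q$-convergence of the nonlinearity shows that $u_*$ is a weak solution of \eqref{nonrelativistic equation}; the uniform lower bound $\|u_c\|_{H^{1/2}}\ge\delta$ and the strong $L^q$-convergence give $u_*\not\equiv0$, hence $u_*>0$ by the strong maximum principle. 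Passing to the limit in the Nehari identity gives $\langle P_c(D)u_c,u_c\rangle=\int_{\R^n}\mathcal{N}(u_c)\bar u_c\to\int_{\R^n}\mathcal{N}(u_*)\bar u_*=\|u_*\|_{H^1}^2$, hence $m_c=\big(\tfrac12-\tfrac1p\big)\langle P_c(D)u_c,u_c\rangle\to I_\infty(u_*)\ge m_\infty$; combined with $\limsup_c m_c\le m_\infty$ from the competitor above, $I_\infty(u_*)=m_\infty$, i.e. $u_*$ is a ground state of \eqref{nonrelativistic equation}, and the uniqueness results of Kwong \cite{Kw} (NLS) and Lenzmann \cite{L2} (NLH) force $u_*=u_\infty$. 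Since every subsequence has a further subsequence converging to the same $u_\infty$, the whole family converges.

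It remains to upgrade weak $H^1$-convergence to strong $H^1$-convergence, for which by the Radon--Riesz property of Hilbert spaces it suffices to show $\|u_c\|_{H^1}\to\|u_\infty\|_{H^1}$. Up to the normalizing constant, $\|u_c\|_{H^1}^2=\int_{\R^n}P_\infty(\xi)|\hat u_c|^2\,d\xi=\langle P_c(D)u_c,u_c\rangle+\int_{\R^n}\big(P_\infty-P_c\big)(\xi)|\hat u_c|^2\,d\xi$; the first term tends to $\|u_\infty\|_{H^1}^2$ by the previous step, and the second tends to $0$: by (i), split at $|\xi|=R$ and bound the low part by $\tfrac{R^2}{c^2}\|\nabla u_c\|_{L^2}^2$ and the high part by $R^{-2}\|u_c\|_{H^2}^2$, then let $c\to\infty$ and $R\to\infty$, using the uniform $H^2$ bound. \textbf{The main obstacle} is precisely the regularity step: because the relativistic symbol grows only linearly, like $c|\xi|$, for $|\xi|\gtrsim c$, the $H^1$-membership of $u_c$ and the uniform $H^1$ (and slightly higher) bounds do not come for free from the $H^{1/2}$-variational framework but must be squeezed out of the equation while verifying that every elliptic and Sobolev constant is independent of $c$; a secondary difficulty is ruling out vanishing of the limit in the non-compact one-dimensional case.
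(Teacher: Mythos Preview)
The paper does not actually prove this theorem: it is quoted from the prior works \cite{CS,L2} and used as a starting point for the sharper rate-of-convergence results that follow. So there is no ``paper's own proof'' to compare against in the strict sense. That said, your outline is a faithful reconstruction of how the argument in \cite{CS,L2} goes, and it dovetails with the tools the present paper independently develops for its later sections: your property~(ii) is exactly Lemma~\ref{symbol bound}, and your uniform higher-Sobolev bootstrap is what the paper carries out in Proposition~\ref{Uniform Sobolev norm bounds for the pseudo-relativistic equations} and Theorem~\ref{elliptic regularity}. Your treatment of the one-dimensional case (concentration-compactness in place of radial compactness) matches the paper's own Remark following the theorem.

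One technical point deserves care. You invoke a ``standard $L^r$-bootstrap (Moser iteration)'' based on $P_c(D)^{-1}$ ``gaining integrability / one full derivative with operator norms uniform in $c\ge1$''. The $L^2$-based version of this is fine and is what the paper uses, but the $L^r$ version is not: as the Remark immediately after Lemma~\ref{symbol bound} points out, the symbol $\langle\xi\rangle/P_c(\xi)$ is \emph{not} a Mikhlin multiplier uniformly in $c$ (its derivatives blow up near $|\xi|\sim c$), so one cannot conclude uniform $L^p\to W^{1,p}$ boundedness of $P_c(D)^{-1}$ for $p\neq2$. The paper therefore runs the regularity iteration entirely inside the $H^s$ scale, via the multilinear estimates of Propositions~\ref{iterated Hartree nonlinear estimates} and~\ref{iterated integer-power nonlinear estimates}. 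Your argument goes through once the bootstrap is rephrased in that way; the $L^\infty$ and $H^2$ bounds you need (hence $\|\mathcal{N}(u_c)\|_{L^2}\le C$) then follow from the uniform $H^s$ bound for $s$ large enough and Sobolev embedding.
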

\begin{rem}
In fact, the existence of a ground state and its $H^1$ convergence for NLS are proved in \cite{CS} only when $n \geq 2$. 
One can however obtain ground states for $n =1$ by arguing similarly to \cite{CS} but adopting the concentration compactness argument \cite{L} instead of
using compact embedding $H^{1/2}_r(\R^n) \hookrightarrow L^p(\R^n)$, which is not true when $n = 1$.
Here, $H^{1/2}_r(\R^n)$ denotes the set of all radial functions in $H^{1/2}(\R^n)$.
\end{rem}

\subsection{Statement of the main result}
In this paper, we further exploit the nonrelativistic limit of a pseudo-relativistic ground state in the earlier works by Lenzmann \cite{L2} and Choi and Seok \cite{CS}. 
Precisely, we find the explicit convergence rate of ground states, in terms of $c > 1$, in an arbitrarily high order Sobolev norm.
In addition, we show that this rate is optimal.

Namely, the main result of this paper is as follows. 

\begin{thm}[Nonrelativistic limit for NLS/NLH]\label{thm-1-2}
Suppose that $n=3$ for NLH and $n \geq 1$,\, $p \in (2,\,\frac{2n}{n-1})\cap \mathbb{N}$ for NLS. Let $u_c$ be a positive radially symmetric ground state of \eqref{pseudo-relativistic equation}, and let $u_\infty$ be the unique positive radially symmetric ground state of \eqref{nonrelativistic equation}. Then $u_c$ belongs to $H^s(\R^n)$ for every $s \geq 1$, and moreover there exist positive constants $A$ and $B$ such that
\begin{equation}
\frac{A}{c^2} \leq \|u_c-u_\infty\|_{H^s(\R^n)} \leq \frac{B}{c^2},
\end{equation}
where  $A$ and $B$ depend only on $s$ for the Hartree nonlinearity and $s,\, n,\, p$ for the power-type nonlinearity. 
\end{thm}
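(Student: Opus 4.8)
The strategy is to linearize the difference of the two equations around $u_\infty$ and invert the nonrelativistic linearized operator on the radial subspace. I set $w_c := u_c-u_\infty$. Subtracting \eqref{nonrelativistic equation} from \eqref{pseudo-relativistic equation} and writing $\mathcal{N}(u_\infty+w_c)-\mathcal{N}(u_\infty)=\mathcal{N}'(u_\infty)w_c+Q(w_c)$, where $Q$ gathers the terms of order $\ge2$ in $w_c$, one obtains
\begin{equation*}
\big(L_\infty+R_c(D)\big)w_c = Q(w_c)-R_c(D)u_\infty ,
\end{equation*}
where $L_\infty := -\Delta+1-\mathcal{N}'(u_\infty)$ is the nonrelativistic linearized operator and $R_c(D)$ is the Fourier multiplier with symbol $R_c(\xi):=P_c(\xi)-|\xi|^2-1$. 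Writing $R_c(\xi)=c^2g(|\xi|^2/c^2)$ with $g(y)=\tfrac12(\sqrt{1+4y}-1)-y$, and noting $g(0)=0$, $g'(y)<0$ for $y>0$, and $|g(y)|\le Cy^2$ on $[0,\infty)$, one gets the pointwise facts $R_c(\xi)<0$ for $\xi\neq0$, $|R_c(\xi)|\le C|\xi|^4/c^2$, and the refined expansion $R_c(\xi)=-|\xi|^4/c^2+O(|\xi|^6/c^4)$; consequently $\|R_c(D)f\|_{H^s}\le \tfrac{C}{c^2}\|f\|_{H^{s+4}}$ with $C$ independent of $c\ge1$. Since $u_c,u_\infty$ are radial, so is $w_c$, and I work throughout in the radial subspace $H^s_r(\R^n)$. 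By the nondegeneracy of $u_\infty$ (Kwong \cite{Kw} for NLS, Lenzmann \cite{L2} for NLH) the kernel of $L_\infty$ consists of the translates $\partial_j u_\infty$, which are non-radial, so $L_\infty$ has trivial kernel on $H^s_r$; together with Fredholmness ($\mathcal{N}'(u_\infty)$ is a relatively compact perturbation of $-\Delta+1$, whose essential spectrum is $[1,\infty)$) and elliptic regularity, $L_\infty$ is an isomorphism $H^{s+2}_r(\R^n)\to H^s_r(\R^n)$ for every $s\ge0$, with inverse denoted $L_\infty^{-1}$.

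The first main step is uniform-in-$c$ control of the regularity of $u_c$: for each $s\ge1$ there is $M_s$, independent of $c\ge1$, with $\|u_c\|_{H^s(\R^n)}\le M_s$. The input is $\|u_c\|_{H^1}\le M_1$, contained in the $H^1$-convergence theorem quoted above. The key structural fact is the uniform lower bound $P_c(\xi)\ge c_0\langle\xi\rangle$ for all $\xi\in\R^n$ and $c\ge1$, with $c_0>0$ absolute, so that $P_c(D)^{-1}$ gains one derivative on $L^2$-based Sobolev spaces uniformly in $c$. Bootstrapping the fixed-point identity $u_c=P_c(D)^{-1}\mathcal{N}(u_c)$ then raises the regularity step by step: in the low-regularity range one uses the $H^{1/2}$-subcriticality of $\mathcal{N}$ (here $p\in\mathbb{N}$ makes $\mathcal{N}(u)=u^{p-1}$ a polynomial, and in the Hartree case $|x|^{-1}*u^2$ and all its derivatives are bounded so the decay of $u$ controls $\mathcal{N}(u)$ in every $H^s$), possibly via a short detour through $L^q$-based spaces to first place $u_c$ in $L^\infty$, and once some $H^s$ with $s>n/2$ is reached the algebra property closes the iteration. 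All constants being $c$-independent, one gets $\|u_c\|_{H^s}\le M_s$ for all $s$; in particular $u_c\in H^s(\R^n)$ for every $s\ge1$. Interpolating these bounds with $\|u_c-u_\infty\|_{H^1}\to0$ upgrades the convergence to $\|u_c-u_\infty\|_{H^s}\to0$ for every $s$.

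Next comes the rate. A self-improvement at a fixed index $\sigma_0>n/2$: from $L_\infty w_c=Q(w_c)-R_c(D)u_c$, applying $L_\infty^{-1}:H^{\sigma_0-2}_r\to H^{\sigma_0}_r$ and using the uniform bound on $\|u_c\|_{H^{\sigma_0+2}}$, the symbol estimate for $R_c$, and that $Q(w_c)$ is at least quadratic in $w_c$ (a finite polynomial sum for NLS since $p\in\mathbb{N}$, a cubic form for NLH) together with $\|w_c\|_{H^{\sigma_0}}\to0$, gives
\begin{equation*}
\|w_c\|_{H^{\sigma_0}}\le C\|w_c\|_{H^{\sigma_0}}^2+\frac{C}{c^2},
\end{equation*}
hence $\|w_c\|_{H^{\sigma_0}}\le B/c^2$ once $c$ is large. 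A bootstrap in two-derivative increments, inserting $\|w_c\|_{H^s}\le B_s/c^2$ into the estimate $\|w_c\|_{H^{s+2}}\le C\|Q(w_c)\|_{H^s}+\tfrac{C}{c^2}\|u_c\|_{H^{s+4}}\le CB_s^2/c^4+CM_{s+4}/c^2$, propagates $\|w_c\|_{H^s}=O(c^{-2})$ to all $s\ge1$. Using this once more --- now $Q(w_c)=O(c^{-4})$ and, by the $H^s$-convergence of $u_c$, $R_c(D)u_c=-c^{-2}\Delta^2u_c+O(c^{-4})=-c^{-2}\Delta^2u_\infty+o(c^{-2})$ in $H^s$ --- one reaches the sharp asymptotics
\begin{equation*}
w_c=\frac{1}{c^2}\,L_\infty^{-1}\!\big(\Delta^2u_\infty\big)+o(c^{-2})\qquad\text{in }H^s(\R^n).
\end{equation*}
Since $\Delta^2u_\infty\neq0$ (otherwise $u_\infty\equiv0$ by Liouville) and $L_\infty^{-1}$ is injective, $v_*:=L_\infty^{-1}(\Delta^2u_\infty)\neq0$, so $\|w_c\|_{H^s}=\|v_*\|_{H^s}\,c^{-2}(1+o(1))$, which yields both inequalities for large $c$. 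For $c$ in a bounded range $[1,c_0]$ one has $u_c\neq u_\infty$ for every such $c$, since $u_c=u_\infty$ would force $R_c(D)u_\infty=0$, impossible because $R_c(\xi)<0$ for $\xi\neq0$; moreover any sequence $u_{c_k}\to u_\infty$ in $H^s$ with $c_k\to c_*\in[1,c_0]$ would, passing to the limit in the equation, make $u_\infty$ solve the $c_*$-pseudo-relativistic equation as well and hence force $R_{c_*}(D)u_\infty=0$, a contradiction. Therefore $\inf_{c\in[1,c_0]}\|w_c\|_{H^s}>0$, and after adjusting the constants $A,B$ the asserted two-sided bound holds for all $c\ge1$.

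The main obstacle is the uniform-in-$c$ higher Sobolev regularity of $u_c$. In contrast with $(1-\Delta)^{-1}$, the operator $P_c(D)^{-1}$ gains only one derivative uniformly in $c$ --- at frequencies $|\xi|\gg c$ the symbol $P_c(\xi)$ behaves like $c|\xi|$ rather than $|\xi|^2$ --- so the regularity bootstrap is longer and one must make sure that no constant degenerates as $c\to\infty$. This is also exactly why the rate cannot be read off from a naive implicit function theorem in a fixed Sobolev space: the perturbation $R_c(D)\sim c^{-2}\Delta^2$ is fourth order while $L_\infty$ is second order, so $R_c(D)$ is not small as an operator between fixed Sobolev spaces; it is small only when applied to the a priori regular functions $u_c$ and $u_\infty$, and supplying that a priori regularity is precisely the role of the uniform bounds.
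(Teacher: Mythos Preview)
Your argument is correct and differs from the paper's in the core step, the rate of convergence. The paper works at the $H^1$ level first: it controls $\int \mathcal{L}w\cdot w\,dx$ via the \emph{quadratic-form} version of nondegeneracy (the spectral-gap inequality for $\mathcal{L}$ on the $H^1$-orthogonal complement of $u_\infty$ in the radial class), which forces a decomposition $w=\lambda u_\infty+v$ and a separate, somewhat delicate estimate on the coefficient $\lambda$ obtained by testing the equation against $u_\infty$; the lower bound is then derived from an independent integral identity $\mathcal{A}=\mathcal{B}$. You instead use the \emph{operator} version of nondegeneracy---$L_\infty$ is an isomorphism $H^{s+2}_r\to H^s_r$ because its kernel consists only of the non-radial translates $\partial_j u_\infty$---and invert directly. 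This avoids the $\lambda$-bookkeeping and yields the sharp first-order asymptotics $w_c=c^{-2}L_\infty^{-1}(\Delta^2 u_\infty)+o(c^{-2})$, from which the upper and lower bounds follow at once. The paper's route is more elementary in that it uses only the spectral-gap inequality rather than the full Fredholm/elliptic package on $H^s_r$, while yours is cleaner and extracts strictly more information, namely the explicit leading-order profile of $u_c-u_\infty$. The remaining ingredients---the uniform symbol lower bound $P_c(\xi)\gtrsim\langle\xi\rangle$, the one-derivative-per-step bootstrap to uniform $H^s$ bounds on $u_c$, and the propagation of the rate from low to high Sobolev norms---are essentially the same in both arguments.
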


\begin{rem}\mbox{~}
\begin{enumerate}
\item Scaling back to the original equations \eqref{eq-elliptic} and \eqref{eq-elliptic'}, we obtain the optimal $O((\frac{mc}{\hbar})^{-2})$ rate of convergence to the non-relativistic ground state.
\item The power-type nonlinearity $u^{p-1}$ admits integer powers in the following cases:
\begin{equation}\label{integer power assumption}
\left\{\begin{aligned}
& n = 1 \text{ and } p\geq 3 \text{ is integer}, \\
& n = 2 \text{ and } p=3.
\end{aligned}
\right.
\end{equation}
\item In the statement of our main theorem, the NLS with fractional power nonlinearity is excluded, because in this case, even the limit $u_\infty$ is not known to be contained in a higher order Sobolev space $H^s$ due to the lack of differentiability of $u^{p-1}$. We report without a proof that when $p$ is fractional, it is possible to obtain the $H^2(\R^n)$ convergence with a weaker $O(1/c)$ rate by a simple modification of the proof of Theorem \ref{thm-1-2}.
\end{enumerate}
\end{rem}

To the best of authors' knowledge, Theorem \ref{thm-1-2} seems the first result showing the explicit convergence rates in a high Sobolev norm for the nonrelativistic limit of ground states. For achieving this, more refined information on the ground state of the limit equation \eqref{nonrelativistic equation} is required. 
Namely,  we exploit carefully the non-degeneracy of the limit solution $u_{\infty}$ of problem \eqref{nonrelativistic equation}.
We recall that  a solution $u_\infty \in H^1_r(\R^n)$ of \eqref{eq-elliptic'} is said to be non-degenerate in radial class if the linearized equation of \eqref{eq-elliptic'} at $u_\infty$,
\[
L_{u_\infty}[\phi] := -\Delta\phi + \phi -\mathcal{N}'(u_\infty)\phi = 0,
\]
only admits the trivial solution $\phi = 0$ on the class $H^1_r(\R^n)$.
This says that the second eigenvalue of the linearized operator $L_{u_\infty}$ is strictly positive so there exists a small constant $c > 0$ satisfying
\[
c\|\phi\|_{H^1(\R^n)}^2  \leq \int_{\R^n}|\nabla\phi|^2 + \phi^2 -\mathcal{N}'(u_\infty)\phi^2\,dx
\]
for every $\phi \in H^1_r(\R^n)$ orthogonal to $u_\infty$ in $H^1$. We will see that this inequality provides with a crucial estimate for proving our main theorem.


Our proof is rather systematic, and it does not depend highly on the structure of the equation \eqref{pseudo-relativistic equation}. Instead, it relies only on sub-criticality of the equations (for a high Sobolev norm convergence) and non-degeneracy of the ground state solution to the limiting equation (for the rate of convergence). We believe that it can be easily adapted to other physical models.

\subsection{Organization of the paper}
In Section \ref{sec-2}, as a preliminary, we prove the iterated nonlinear estimates (Proposition \ref{iterated Hartree nonlinear estimates} and \ref{iterated integer-power nonlinear estimates}) to handle the $H^{1/2}$-subcritical nonlinearities. In Section  \ref{sec-3}, we apply the iterated estimates to obtain a high Sobolev norm uniform bound on the pseudo-relativistic ground state $u_c$ (Proposition \ref{Uniform Sobolev norm bounds for the pseudo-relativistic equations}). In Section \ref{sec-10}, we prove the sharp upper and lower estimates for the nonrelativistic limit in the space $H^1 (\mathbb{R}^n)$. The convergence norm is improved in Section \ref{sec-4} to the Sobolev space of any order. 
In the appendices, for the reader's convenience, we summarize the basic properties of the Lorentz spaces and the fractional Leibniz rule.

\

\noindent \emph{Notation.} We shall use the notation $A\lesssim B$ (resp., $A\gtrsim B$) when $A \leq CB$ (resp., $A \geq CB$) holds with a constant $C>1$ not depending on the parameter $c >1$. Also, we shall denote $A\sim B$ if both $A \lesssim B$ and $A \gtrsim B$ hold.

\subsection{Acknowledgement} 
This research of the first and third authors was supported by the POSCO TJ Park Science Fellowship.
This research of the second author was supported by Basic Science Research Program through the National Research Foundation of Korea(NRF) funded by the Ministry of Education (NRF-2015R1A5A1009350).
This research of the third author was supported by Basic Science Research Program through the National Research Foundation of Korea(NRF) funded by the Ministry of Education (NRF-2014R1A1A2054805).

\section{Iterated nonlinear estimates}\label{sec-2}

We prove the iterated multi-linear estimates in the Sobolev space $H^s$, where $H^s$ is the Sobolev space with the norm 
$$\|u\|_{H^s(\mathbb{R}^n)}: =\Big\{\frac{1}{(2\pi)^n}\int_{\mathbb{R}^n} (1+|\xi|^2)^{s} |\hat{u}(\xi)|^2 d \xi\Big\}^{1/2}.$$
For the Hartree nonlinearity, we show the following iterated trilinear estimates.

\begin{prop}[Iterated estimate for the Hartree nonlinearity]\label{iterated Hartree nonlinear estimates}
There exists a strictly increasing sequence $\{s_0\}_{k=1}^\infty$, with $s_0=\frac{1}{2}$ and $s_k\to\infty$, such that
\begin{equation}\label{eq: iterated Hartree nonlinear estimates}
\Big\|\Big(\frac{1}{|x|}*(v_1 v_2)\Big)v_3\Big\|_{H^{s_{k+1}-1}(\mathbb{R}^3)}\lesssim \prod_{j=1}^3\|v_j\|_{H^{s_k}(\mathbb{R}^3)}.
\end{equation}
\end{prop}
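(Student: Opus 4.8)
\noindent\emph{Sketch of a proof.}
The plan is to exploit that on $\R^3$ the Hartree convolution $\tfrac{1}{|x|}*\,\cdot\,$ is, up to a constant, the Riesz potential $I_2=c(-\Delta)^{-1}$: it gains two derivatives at high frequencies, while at low frequencies it is governed by the Hardy--Littlewood--Sobolev inequality, which we refine to Lorentz spaces in order to reach the relevant endpoints (the Lorentz-space facts used below are collected in the appendix). The estimate that drives the whole argument is that for $v_1,v_2\in H^{1/2}(\R^3)$ the potential $\tfrac1{|x|}*(v_1v_2)$ lies in $L^\infty(\R^3)$, with
\[
\Big\|\tfrac1{|x|}*(v_1v_2)\Big\|_{L^\infty(\R^3)}\lesssim \|v_1\|_{H^{1/2}(\R^3)}\,\|v_2\|_{H^{1/2}(\R^3)};
\]
this follows by chaining the sharp Sobolev--Lorentz embedding $H^{1/2}(\R^3)\hookrightarrow L^{3,2}$, the Lorentz H\"older inequality $L^{3,2}\cdot L^{3,2}\hookrightarrow L^{3/2,1}$, and Young's inequality for Lorentz spaces together with $\tfrac1{|x|}\in L^{3,\infty}(\R^3)$ (equivalently, the endpoint Hardy--Littlewood--Sobolev bound $I_2\colon L^{3/2,1}\to L^\infty$).

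I would take $s_0=\tfrac12$ and $s_k=k$ for $k\ge1$, which is strictly increasing with $s_k\to\infty$. For $k=0$ the claimed bound $\|(\tfrac1{|x|}*(v_1v_2))v_3\|_{L^2}\le\|\tfrac1{|x|}*(v_1v_2)\|_{L^\infty}\|v_3\|_{L^2}$ is immediate from the display and $H^{1/2}(\R^3)\hookrightarrow L^2$. For $k\ge1$, write $w:=\tfrac1{|x|}*(v_1v_2)$; the Leibniz rule followed by H\"older's inequality gives
\[
\|wv_3\|_{H^{k}(\R^3)}\lesssim \sum_{\ell=0}^{k}\|\nabla^\ell w\|_{L^{p_\ell}(\R^3)}\,\|\nabla^{k-\ell}v_3\|_{L^{q_\ell}(\R^3)},\qquad \tfrac1{p_\ell}+\tfrac1{q_\ell}=\tfrac12,\quad p_0=\infty
\]
(the remaining $L^2$ part of the $H^k$ norm is handled identically). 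The $\ell=0$ term is controlled by $\|w\|_{L^\infty}\|v_3\|_{H^k}$ via the display. For $\ell\ge1$: if $k=1$ then $\nabla w=c\,R\,I_1(v_1v_2)$ with $R$ the Riesz transforms, so the Lorentz--Hardy--Littlewood--Sobolev bound $I_1\colon L^{3/2,1}\to L^{3,1}$ yields $\|\nabla w\|_{L^{3}}\lesssim\|v_1\|_{H^{1/2}}\|v_2\|_{H^{1/2}}$, which pairs with $\|v_3\|_{L^6}\lesssim\|v_3\|_{H^1}$; if $k\ge2$ then $H^k(\R^3)$ is an algebra, so $v_1v_2\in H^k$, and after a Littlewood--Paley splitting $w=P_{\le1}w+P_{>1}w$ the high-frequency part of $\nabla^\ell w$ is a bounded (Mikhlin) multiplier applied to $\langle D\rangle^{\ell-2}(v_1v_2)\in H^{k-\ell+2}\hookrightarrow L^{p_\ell}$, the low-frequency part being harmless by Bernstein, and the pairing with $\|\nabla^{k-\ell}v_3\|_{L^{q_\ell}}\lesssim\|v_3\|_{H^k}$ is then routine. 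In all cases the right-hand side is $\lesssim\prod_{j}\|v_j\|_{H^{s_k}}$.

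The heart of the matter is the base level $s_0=\tfrac12$: since $H^{1/2}(\R^3)$ is not an algebra, $v_1v_2$ a priori only lies in $H^{-1/2}$, and both the Sobolev embedding and the Hardy--Littlewood--Sobolev inequality must be used exactly at their endpoints --- which is precisely what forces the Lorentz-space refinements above. The remaining difficulty is bookkeeping: one must check at each step that the H\"older, Sobolev and Hardy--Littlewood--Sobolev exponents lie in their admissible ranges, and in particular treat the inhomogeneous low-frequency contribution of $\tfrac1{|x|}*\,\cdot\,$ (invisible to the homogeneous Riesz potential) via the Littlewood--Paley decomposition indicated above.
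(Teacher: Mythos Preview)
Your proof is correct and hinges on the same key observation as the paper: the Lorentz-space chain $H^{1/2}(\R^3)\hookrightarrow L^{3,2}$, $L^{3,2}\cdot L^{3,2}\hookrightarrow L^{3/2,1}$, and the endpoint Hardy--Littlewood--Sobolev bound $\tfrac{1}{|x|}*\,:L^{3/2,1}\to L^\infty$, which together give $\|\tfrac{1}{|x|}*(v_1v_2)\|_{L^\infty}\lesssim\|v_1\|_{H^{1/2}}\|v_2\|_{H^{1/2}}$.

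The organization, however, is genuinely different. The paper proves a single trilinear estimate
\[
\Big\|\Big(\tfrac{1}{|x|}*(v_1 v_2)\Big)v_3\Big\|_{H^{s}(\R^3)}\lesssim\prod_{j=1}^3\|v_j\|_{H^{s}(\R^3)}\qquad\text{for all }s\ge\tfrac12,
\]
in one shot via the \emph{fractional} Leibniz rule (splitting into the term with $\|w\|_{L^\infty}$ above and a term $\||\nabla|^s(\tfrac1{|x|}*(v_1v_2))\|_{L^6}\|v_3\|_{L^3}$ handled by ordinary HLS), and then simply takes $s_k=k+\tfrac12$, so that $s_{k+1}-1=s_k$ and the proposition is immediate. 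Your argument instead takes $s_k=k$ for $k\ge1$ and proceeds case by case: integer Leibniz plus H\"older, the special computation $\nabla w=cRI_1(v_1v_2)$ at $k=1$, and for $k\ge2$ a Littlewood--Paley splitting $w=P_{\le1}w+P_{>1}w$ with a Mikhlin multiplier bound on the high part and Bernstein on the low part. This is valid, but it introduces machinery (frequency localization, separate treatment of $\ell=0$, $\ell=1$, $\ell\ge2$) that the fractional Leibniz rule absorbs automatically. What your route buys is that it never invokes fractional product rules, so it is arguably more elementary; what the paper's route buys is a uniform argument with no case distinctions and no low/high-frequency splitting.
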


The proposition is immediately deduced from the elementary trilinear estimate with $s_{k}:=k+\frac{1}{2}$.

\begin{lem}[Trilinear estimate for the Hartree nonlinearity]\label{Hartree nonlinear estimates}
$$\Big\|\Big(\frac{1}{|x|}*(v_1 v_2)\Big)v_3\Big\|_{H^{s}(\mathbb{R}^3)}\lesssim\prod_{j=1}^3\|v_j\|_{H^s(\mathbb{R}^3)},\quad \forall s\geq\tfrac{1}{2}.$$
\end{lem}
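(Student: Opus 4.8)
The plan is to compute $\|F\|_{H^s(\R^3)}=\|\langle\nabla\rangle^s F\|_{L^2(\R^3)}$ for $F=w\,v_3$ with $w:=\tfrac1{|x|}*(v_1v_2)$, by distributing the derivative with the inhomogeneous fractional Leibniz (Kato--Ponce) rule recalled in the appendix. Applied with the Hölder pairs $(6,3)$ and $(\infty,2)$, whose reciprocals both sum to $\tfrac12$, it yields
\[
\|\langle\nabla\rangle^s(w\,v_3)\|_{L^2}\ \lesssim\ \|\langle\nabla\rangle^s w\|_{L^6}\,\|v_3\|_{L^3}\ +\ \|w\|_{L^\infty}\,\|\langle\nabla\rangle^s v_3\|_{L^2}.
\]
Since $v_j\in H^s(\R^3)\subset H^{1/2}(\R^3)\subset L^2(\R^3)\cap L^3(\R^3)$, the potential $w$ is a well-defined tempered distribution and all quantities below are finite; I will bound each of the two terms by $\prod_{j=1}^3\|v_j\|_{H^s}$.

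For the second term, $\|\langle\nabla\rangle^s v_3\|_{L^2}=\|v_3\|_{\dot H^s}\le\|v_3\|_{H^s}$ is trivial, so the real point is the $L^\infty$ estimate of $w$. Here I invoke the Lorentz-space facts collected in the appendix: since $\tfrac1{|x|}\in L^{3,\infty}(\R^3)$, the Lorentz--Hölder inequality gives $\|w\|_{L^\infty}=\sup_x\bigl|\int\tfrac{1}{|x-y|}(v_1v_2)(y)\,dy\bigr|\lesssim\|v_1v_2\|_{L^{3/2,1}}$; a second Lorentz--Hölder inequality gives $\|v_1v_2\|_{L^{3/2,1}}\lesssim\|v_1\|_{L^{3,2}}\|v_2\|_{L^{3,2}}$; and the sharp Sobolev embedding $\dot H^{1/2}(\R^3)\hookrightarrow L^{3,2}(\R^3)$, together with $\|v_j\|_{\dot H^{1/2}}\le\|v_j\|_{H^{1/2}}\le\|v_j\|_{H^s}$, closes this term.

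For the first term, $\|v_3\|_{L^3}\lesssim\|v_3\|_{H^{1/2}}\le\|v_3\|_{H^s}$ by the endpoint Sobolev embedding $H^{1/2}(\R^3)\hookrightarrow L^3(\R^3)$. For $\|\langle\nabla\rangle^s w\|_{L^6}$ I use that $\langle\nabla\rangle^s$ and convolution with $\tfrac1{|x|}$ are Fourier multipliers, hence commute, so $\langle\nabla\rangle^s w=\tfrac1{|x|}*\langle\nabla\rangle^s(v_1v_2)$; then the Hardy--Littlewood--Sobolev inequality (convolution with $\tfrac1{|x|}$ maps $L^{6/5}(\R^3)$ into $L^6(\R^3)$) gives $\|\langle\nabla\rangle^s w\|_{L^6}\lesssim\|\langle\nabla\rangle^s(v_1v_2)\|_{L^{6/5}}$, and a further application of the fractional Leibniz rule with the Hölder pair $(2,3)$, followed once more by $\|v_j\|_{L^3}\lesssim\|v_j\|_{H^s}$, yields $\|\langle\nabla\rangle^s(v_1v_2)\|_{L^{6/5}}\lesssim\|v_1\|_{\dot H^s}\|v_2\|_{L^3}+\|v_1\|_{L^3}\|v_2\|_{\dot H^s}\lesssim\|v_1\|_{H^s}\|v_2\|_{H^s}$. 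Combining the two displays proves the lemma; the Hölder exponents were fixed independently of $s$, so the estimate holds for every $s\ge\tfrac12$, with a constant depending only on $s$ through the norm in the Kato--Ponce inequality.

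The main obstacle is the endpoint $s=\tfrac12$. Not only is $H^{1/2}(\R^3)\hookrightarrow L^3(\R^3)$ critical, but, more seriously, the naive bound sending $\tfrac1{|x|}*$ from $L^{3/2}(\R^3)$ to $L^\infty(\R^3)$ is false, so $\|w\|_{L^\infty}$ cannot be obtained from Lebesgue-space Hölder and Hardy--Littlewood--Sobolev alone. This is exactly what forces the passage to Lorentz spaces: one exploits the refinement $\dot H^{1/2}(\R^3)\hookrightarrow L^{3,2}(\R^3)$ and the pairing of $\tfrac1{|x|}\in L^{3,\infty}(\R^3)$ with $L^{3/2,1}(\R^3)$, which is precisely the reason the Lorentz-space preliminaries are recorded in the appendix. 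A secondary, purely technical point is to justify the multiplier identity $\langle\nabla\rangle^s w=\tfrac1{|x|}*\langle\nabla\rangle^s(v_1v_2)$ and the a priori finiteness needed to invoke the fractional Leibniz rule, both of which are routine since $v_1v_2\in L^1(\R^3)\cap L^{3/2}(\R^3)$.
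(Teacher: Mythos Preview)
Your proof is correct and follows essentially the same route as the paper's: split via the Kato--Ponce rule with the H\"older pairs $(\infty,2)$ and $(6,3)$, handle the $L^\infty$ bound on the potential through the Lorentz-space chain $L^{3,\infty}\times L^{3/2,1}\to L^\infty$ with $H^{1/2}\hookrightarrow L^{3,2}$, and treat the other term by commuting the derivative past the convolution, applying Hardy--Littlewood--Sobolev $L^{6/5}\to L^6$, and a second Leibniz estimate. The only slip is notational: $\|\langle\nabla\rangle^s v_3\|_{L^2}=\|v_3\|_{H^s}$, not $\|v_3\|_{\dot H^s}$, and similarly in the final line the $\dot H^s$ should read $H^s$; neither affects the argument.
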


\begin{proof}
By the fractional Leibniz rule, we get
\begin{align*}
\Big\|\Big(\frac{1}{|x|}*(v_1 v_2)\Big)v_3\Big\|_{H^{s}(\mathbb{R}^3)}&=\Big\|\langle\nabla\rangle^s\Big(\frac{1}{|x|}*(v_1 v_2)\Big)v_3\Big\|_{L^2(\mathbb{R}^3)}\\
&\lesssim \Big\|\frac{1}{|x|}*(v_1v_2)\Big\|_{L^\infty(\mathbb{R}^3)}\|v_3\|_{H^s(\mathbb{R}^3)}\\
&\quad+ \Big\||\nabla|^s\Big(\frac{1}{|x|}*(v_1v_2)\Big)\Big\|_{L^{6}(\mathbb{R}^3)}\|v_3\|_{L^3(\mathbb{R}^3)}.
\end{align*}
By the H\"older and Sobolev inequalities in the Lorentz spaces, 
\begin{align*}
\Big\|\frac{1}{|x|}*(v_1v_2)\Big\|_{L^\infty(\mathbb{R}^3)}\|v_3\|_{H^s(\mathbb{R}^3)}&\lesssim\|v_1v_2\|_{L^{\frac{3}{2},1}(\mathbb{R}^3)}\|v_3\|_{H^s(\mathbb{R}^3)}\\
&\lesssim \|v_1\|_{L^{3,2}(\mathbb{R}^3)}\|v_2\|_{L^{3,2}(\mathbb{R}^3)}\|v_3\|_{H^s(\mathbb{R}^3)}\\
&\lesssim\prod_{j=1}^3\|v_j\|_{H^s(\mathbb{R}^3)}
\end{align*}
and
\begin{align*}
&\Big\||\nabla|^s\Big(\frac{1}{|x|}*(v_1v_2)\Big)\Big\|_{L^{6}(\mathbb{R}^3)}\|v_3\|_{L^3(\mathbb{R}^3)}\\
&=\Big\|\frac{1}{|x|}*|\nabla|^s(v_1v_2)\Big\|_{L^{6}(\mathbb{R}^3)}\|v_3\|_{L^3(\mathbb{R}^3)}\\
&\lesssim \||\nabla|^s(v_1v_2)\|_{L^{\frac{6}{5}}(\mathbb{R}^3)}\|v_3\|_{L^3(\mathbb{R}^3)}\\
&\lesssim \Big(\|v_1\|_{H^s(\mathbb{R}^3)}\|v_2\|_{L^3(\mathbb{R}^3)}+\|v_1\|_{L^3(\mathbb{R}^3)}\|v_2\|_{H^s(\mathbb{R}^3)}\Big)\|v_3\|_{L^3(\mathbb{R}^3)}\\
&\lesssim \prod_{j=1}^3\|v_j\|_{H^s(\mathbb{R}^3)},
\end{align*}
where in the first inequality, we used the Hardy-Littlewood-Sobolev  inequality. Therefore, we obtain the trilinear estimate in the lemma.
\end{proof}

We prove an analogous iterated estimate for the integer-power nonlinearity. 

\begin{prop}[Iterated estimate for the integer-power nonlinearity]\label{iterated integer-power nonlinear estimates}
Suppose that $p\in(2,\frac{2n}{n-1})$ is an integer. Then, there exists a strictly increasing sequence $\{s_0\}_{k=1}^\infty$, with $s_0=\frac{1}{2}$ and $s_k\to\infty$, such that
\begin{equation}\label{eq: iterated integer-power nonlinear estimates}
\Big\|\prod_{j=1}^{p-1}v_j\Big\|_{H^{s_{k+1}-1}(\mathbb{R}^n)}\lesssim \prod_{j=1}^{p-1}\|v_j\|_{H^{s_k}(\mathbb{R}^n)}.
\end{equation}
\end{prop}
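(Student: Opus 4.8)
The plan is to mirror the structure of Proposition~\ref{iterated Hartree nonlinear estimates}: reduce \eqref{eq: iterated integer-power nonlinear estimates} to one scale-independent multilinear estimate and then iterate it. First I would establish that there is a loss $\theta=\theta(n,p)\in(0,1)$ such that
\[
\Big\|\prod_{j=1}^{p-1}v_j\Big\|_{H^{s-\theta}(\mathbb{R}^n)}\lesssim\prod_{j=1}^{p-1}\|v_j\|_{H^{s}(\mathbb{R}^n)}\qquad\text{for every }s\ge\tfrac12;
\]
then with $s_k:=\tfrac12+k(1-\theta)$ (strictly increasing since $\theta<1$, with $s_0=\tfrac12$ and $s_k\to\infty$) one has $s_{k+1}-1=s_k-\theta$, so the displayed bound with $s=s_k$ is precisely \eqref{eq: iterated integer-power nonlinear estimates}. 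Unlike the Hartree case, where the convolution with $|x|^{-1}$ returns two derivatives and the analogous estimate is lossless (letting $\{s_k\}$ be arithmetic with step $1$), here the step is the smaller number $1-\theta$, so the iteration genuinely builds regularity upward; the subcriticality hypothesis $p<\tfrac{2n}{n-1}$ enters exactly in forcing $\theta<1$.

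To prove the uniform estimate I would split at $s=\tfrac n2$ (recall that an integer $p\in(2,\tfrac{2n}{n-1})$ forces $n\in\{1,2\}$, so the range $\tfrac12\le s\le\tfrac n2$ is relevant only then). If $s>\tfrac n2$, then $H^s(\mathbb{R}^n)\hookrightarrow L^\infty(\mathbb{R}^n)$, and the fractional Leibniz rule of the appendix --- extended to its multilinear version by repeated use of the identity $f_1\cdots f_m=f_1(f_2\cdots f_m)$ --- with the $p-2$ undifferentiated factors placed in $L^\infty$ gives $\|\prod_j v_j\|_{H^s}\lesssim\prod_j\|v_j\|_{H^s}$, hence the claim for any $\theta\ge0$. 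If $\tfrac12\le s\le\tfrac n2$ and $s-\theta\ge0$, I write $\|\prod_j v_j\|_{H^{s-\theta}}=\|\langle\nabla\rangle^{s-\theta}(v_1\cdots v_{p-1})\|_{L^2}$ and use the multilinear Leibniz rule to bound this by $\sum_i\|\langle\nabla\rangle^{s-\theta}v_i\|_{L^q}\prod_{j\ne i}\|v_j\|_{L^r}$ with $\tfrac1q+(p-2)\tfrac1r=\tfrac12$; choosing $\tfrac1r:=\tfrac12-\tfrac sn+\delta$ for a small fixed $\delta>0$ makes $H^s\hookrightarrow L^r$, while $\tfrac1q=\tfrac12-(p-2)(\tfrac12-\tfrac sn)-(p-2)\delta$ and $H^s\hookrightarrow W^{s-\theta,q}$ holds exactly when $\theta\ge n(\tfrac12-\tfrac1q)=(p-2)(\tfrac n2-s)+n(p-2)\delta$. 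Since $s\ge\tfrac12$, the right-hand side is at most $(p-2)\tfrac{n-1}{2}+n(p-2)\delta$, so $\theta:=(p-2)\tfrac{n-1}{2}+n(p-2)\delta$ works for all $s\ge\tfrac12$; the slack $\delta$ is needed only to keep $\tfrac1r>0$ at $s=\tfrac n2$ (and, when $n=1$, at $s=\tfrac12$). The remaining case $s-\theta\le0$, which occurs at most at the first step or two, is even easier: then $H^{s-\theta}\supset L^2$, so it suffices to bound $\|v_1\cdots v_{p-1}\|_{L^2}$ by H\"older's inequality together with $H^s\hookrightarrow L^{q_j}$.

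What is left is to verify $\theta\in(0,1)$ and that the H\"older exponent $q$ is admissible, i.e. $\tfrac1q>0$; for $\delta$ small both amount to $(p-2)\tfrac{n-1}{2}<1$, that is $p<2+\tfrac{2}{n-1}=\tfrac{2n}{n-1}$, the standing hypothesis. The only genuine labor I foresee is the bookkeeping of the H\"older and Sobolev exponents and the (harmless, because subcriticality is a strict inequality) handling of the endpoint embedding $H^{n/2}\not\hookrightarrow L^\infty$ through the slack parameter $\delta$; beyond the fractional Leibniz rule and the Sobolev embedding, no new ingredient is required --- which is the systematic flavor advertised in the introduction.
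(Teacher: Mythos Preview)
Your argument is correct and rests on the same tools as the paper's proof---the fractional Leibniz rule, Sobolev embedding, and the subcriticality condition $p<\tfrac{2n}{n-1}$---but the bookkeeping is organized differently. The paper proves a three-case lemma (Lemma~\ref{integer-power nonlinear estimates}) giving the sharp Sobolev-scaling gain $\|\prod v_j\|_{H^{(p-1)s-n(p-2)/2}}\lesssim\prod\|v_j\|_{H^s}$ for $\tfrac12\le s<\tfrac n2$, with separate treatment at and above $s=\tfrac n2$, and then builds $\{s_k\}$ by the recursion $s_{k+1}=1-\tfrac{n(p-2)}{2}+(p-1)s_k$ until the threshold $\tfrac n2$ is crossed, switching to step size $1$ thereafter; the resulting step sizes grow with $k$. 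You instead fix a single loss $\theta=(p-2)\tfrac{n-1}{2}+n(p-2)\delta$---precisely the paper's worst-case loss, at $s=\tfrac12$, plus an $\epsilon$ of room for the endpoint embedding---and iterate the uniform estimate $\|\prod v_j\|_{H^{s-\theta}}\lesssim\prod\|v_j\|_{H^s}$ along the arithmetic sequence $s_k=\tfrac12+k(1-\theta)$. Your route is conceptually tidier (one inequality, one sequence, no case-switch), at the cost of a slower-growing $\{s_k\}$; the paper's is sharper at each step but requires tracking the recursion and the handoff at $s=\tfrac n2$. Both arguments hinge on exactly the same inequality $(p-2)\tfrac{n-1}{2}<1$, which is the subcriticality hypothesis.
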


As in the proof of Proposition \ref{iterated Hartree nonlinear estimates}, we show the proposition using the multi-linear estimates.

\begin{lem}[Multi-linear estimates for the power-type nonlinearity]\label{integer-power nonlinear estimates}
Suppose that $p\in(2,\frac{2n}{n-1})$ is an integer.\\
$(i)$ If $\frac{1}{2}\leq s<\frac{n}{2}$ $(\Rightarrow n\geq 2)$, then
$$\Big\|\prod_{j=1}^{p-1}v_j\Big\|_{H^{(p-1)s-\frac{n(p-2)}{2}}(\mathbb{R}^n)}\lesssim \prod_{j=1}^{p-1}\|v_j\|_{H^s(\mathbb{R}^n)}.$$
$(ii)$ For any small $\epsilon>0$,
$$\Big\|\prod_{j=1}^{p-1}v_j\Big\|_{H^{\frac{n-\epsilon}{2}}(\mathbb{R}^n)}\lesssim \prod_{j=1}^{p-1}\|v_j\|_{H^{\frac{n}{2}}(\mathbb{R}^n)}.$$
$(iii)$ If $s>\frac{n}{2}$, then 
$$\Big\|\prod_{j=1}^{p-1}v_j\Big\|_{H^{s}(\mathbb{R}^n)}\lesssim \prod_{j=1}^{p-1}\|v_j\|_{H^s(\mathbb{R}^n)}.$$
\end{lem}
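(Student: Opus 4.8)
The plan is to derive all three inequalities from one mechanism: the fractional Leibniz (Kato--Ponce) rule recorded in the appendices, combined with the Sobolev embeddings $H^s(\R^n)\hookrightarrow L^{\frac{2n}{n-2s}}(\R^n)$ for $0\le s<\frac n2$, $\;H^{n/2}(\R^n)\hookrightarrow L^q(\R^n)$ for every $2\le q<\infty$, and $H^s(\R^n)\hookrightarrow L^\infty(\R^n)$ for $s>\frac n2$. In each case I would place the full target-order derivative $\langle\nabla\rangle^{\sigma}$ on a single factor $v_l$ (controlling it in a Lebesgue space via the Sobolev embedding of a \emph{positive}-order Sobolev space), distribute the remaining $p-2$ factors into Lebesgue spaces by H\"older, choose the Lebesgue exponents so that their reciprocals add up to $\tfrac12$, and finally sum over $l=1,\dots,p-1$. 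The three parts differ only in which of the three embeddings above is invoked.

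Part $(iii)$ is immediate: for $s>\frac n2$ the space $H^s(\R^n)$ is a Banach algebra, since Kato--Ponce gives $\|\langle\nabla\rangle^s(fg)\|_{L^2}\lesssim\|\langle\nabla\rangle^s f\|_{L^2}\|g\|_{L^\infty}+\|f\|_{L^\infty}\|\langle\nabla\rangle^s g\|_{L^2}$ and $\|\cdot\|_{L^\infty}\lesssim\|\cdot\|_{H^s}$; iterating over the $p-1$ factors (each partial product again lying in $H^s$) finishes the proof. For part $(i)$ write $m:=p-1$ and $\sigma:=ms-\tfrac{n(m-1)}{2}$. The key structural fact is that $\sigma\le s$ (equivalently $s\le\frac n2$), so $\langle\nabla\rangle^\sigma v_l$ still belongs to the Sobolev space $H^{s-\sigma}$ of nonnegative order $s-\sigma=(m-1)\big(\tfrac n2-s\big)$, which embeds into $L^{r}$ with $\tfrac1r=\tfrac12-\tfrac{s-\sigma}{n}=\tfrac12-(m-1)\big(\tfrac12-\tfrac sn\big)$; a one-line computation shows that this, together with $m-1$ copies of the Sobolev exponent $\tfrac1q=\tfrac12-\tfrac sn$ of $H^s(\R^n)$, sums to exactly $\tfrac12$. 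Hence applying Kato--Ponce, then H\"older on the undifferentiated factors, then the two Sobolev embeddings closes the estimate. (For an integer power with $n\ge 2$ the hypotheses in fact force $n=2$, $p=3$, so $m=2$ and this is literally one application of the bilinear Kato--Ponce inequality; I keep the general exponent bookkeeping for transparency.)

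Part $(ii)$ is the only genuinely delicate case, precisely because $H^{n/2}(\R^n)\hookrightarrow L^\infty(\R^n)$ \emph{fails}: the algebra argument of $(iii)$ is unavailable and no factor may be placed in $L^\infty$. I would rerun the scheme of $(i)$ with $s=\tfrac n2$ and target $\sigma=\tfrac{n-\ep}{2}$; now $\langle\nabla\rangle^\sigma v_l\in H^{\ep/2}\hookrightarrow L^{r}$ with $\tfrac1r=\tfrac12-\tfrac{\ep}{2n}$, while the remaining $p-2$ factors go into $L^q$ with $\tfrac{p-2}{q}=\tfrac{\ep}{2n}$, i.e.\ $q=\tfrac{2n(p-2)}{\ep}<\infty$, an admissible exponent for $H^{n/2}(\R^n)\hookrightarrow L^q(\R^n)$; the reciprocals again add to $\tfrac12$. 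Thus the $\ep$-loss of regularity is exactly what makes $q$ finite and lets one bypass $L^\infty$.

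The main obstacle I anticipate is bookkeeping-level rather than conceptual: if one uses only the \emph{bilinear} Kato--Ponce rule (as may be needed when $p-1\ge 3$, which occurs for $n=1$ in part $(ii)$), the multilinear estimate must be produced by induction — writing $\prod_{j=1}^{k}v_j=v_k\cdot\prod_{j=1}^{k-1}v_j$, applying Kato--Ponce, handling the term carrying the derivatives on $v_k$ by H\"older on the undifferentiated product $\prod_{j=1}^{k-1}v_j$ and the term carrying the derivatives on $\prod_{j=1}^{k-1}v_j$ by the inductive hypothesis — and one must verify that every Lebesgue exponent stays in $(1,\infty)$ along the recursion. What makes this go through is that in the bilinear rule the undifferentiated factor carries no derivatives, so it splits cleanly by H\"older across the remaining $v_j$'s while the derivatives recurse, and no step ever requires $L^\infty$; once the exponent budget is arranged consistently, all three inequalities follow, and Propositions \ref{iterated Hartree nonlinear estimates}--\ref{iterated integer-power nonlinear estimates} then follow by choosing $s_k$ so that $s_{k+1}-1$ equals the output order in $(i)$ below $\tfrac n2$, in $(ii)$ at $\tfrac n2$, and in $(iii)$ above $\tfrac n2$.
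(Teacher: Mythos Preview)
Your proposal is correct and follows essentially the same route as the paper: apply the fractional Leibniz rule to place all $\sigma$ derivatives on one factor, control that factor in a Lebesgue space via the appropriate Sobolev embedding (the $H^s\hookrightarrow L^{2n/(n-2s)}$ embedding for $(i)$, the $H^{n/2}\hookrightarrow L^q$, $q<\infty$, embedding for $(ii)$, and $H^s\hookrightarrow L^\infty$ for $(iii)$), and distribute the undifferentiated factors by H\"older so that the reciprocals sum to $\tfrac12$ --- your Lebesgue exponents match the paper's $W^{\sigma,\,2n/(n-(n-2s)(p-2))}$ and $L^{2n(p-2)/\ep}$ exactly. Your observation that the integer constraint forces $n=2$, $p=3$ in part $(i)$, and your explicit discussion of how the bilinear Kato--Ponce iterates to the multilinear statement, are both more careful than the paper, which writes the multilinear Leibniz expansion directly and treats the case $\sigma\le 0$ in $(i)$ separately via a pure H\"older/Sobolev argument without Leibniz.
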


\begin{proof}
We prove the lemma using the Sobolev and H\"older inequalities and the fractional Leibniz rule as in the proof of Lemma \ref{Hartree nonlinear estimates}. For $(i)$, when $(p-1)s-\frac{n(p-2)}{2}\leq 0$, we obtain
\begin{align*}
\Big\|\prod_{j=1}^{p-1}v_j\Big\|_{H^{(p-1)s-\frac{n(p-2)}{2}}(\mathbb{R}^n)}&\lesssim \Big\|\prod_{j=1}^{p-1}v_j\Big\|_{L^{\frac{2n}{(p-1)(n-2s)}}(\mathbb{R}^n)}\\
&\lesssim \prod_{j=1}^{p-1}\|v_j\|_{L^{\frac{2n}{n-2s}}(\mathbb{R}^n)}\\
&\lesssim \prod_{j=1}^{p-1}\|v_j\|_{H^s(\mathbb{R}^n)}.
\end{align*}
When $(p-1)s-\frac{n(p-2)}{2}\leq 0$, by fractional Leibniz rule, 
\begin{align*}
\Big\|\prod_{j=1}^{p-1}v_j\Big\|_{H^{(p-1)s-\frac{n(p-2)}{2}}(\mathbb{R}^n)}&\lesssim \sum_{\ell=1}^{k-1}\prod_{j=1, j\neq \ell}^{p-1}\|v_j\|_{L^{\frac{2n}{n-2s}}(\mathbb{R}^n)}\|v_{\ell}\|_{W^{(p-1)s-\frac{n(p-2)}{2},\frac{2n}{n-(n-2s)(p-2)}}(\mathbb{R}^n)}\\
&\lesssim \sum_{\ell=1}^{k-1}\prod_{j=1}^{p-1}\|v_j\|_{H^s(\mathbb{R}^n)}\lesssim\prod_{j=1}^{p-1}\|v_j\|_{H^s(\mathbb{R}^n)}.
\end{align*}
Here, we used that
\begin{align*}
n-(n-2s)(p-2)&>n-(n-2s)\cdot(\tfrac{2n}{n-1}-2)=n-(n-2s)\cdot(\tfrac{2}{n-1})\quad\textup{(by $p<\tfrac{2n}{n-1}$)}\\
&\geq n-(n-1)\cdot\tfrac{2}{n-1}=n-2\geq0\quad\textup{(by $n\geq 2$)}.
\end{align*}
For $(ii)$, by the fractional Leibniz rule and the Sobolev embedding $L^q(\mathbb{R}^n)\hookrightarrow H^{\frac{n}{2}}(\mathbb{R}^n)$ for all $2\leq q<\infty$,
\begin{align*}
\Big\|\prod_{j=1}^{p-1}v_j\Big\|_{H^{\frac{n-\epsilon}{2}}(\mathbb{R}^n)}&=\Big\|\langle\nabla\rangle^{\frac{n-\epsilon}{2}}\prod_{j=1}^{p-1}v_j\Big\|_{L^2(\mathbb{R}^n)}\\
&\lesssim \sum_{\ell=1}^{k-1}\prod_{j=1, j\neq \ell}^{p-1}\|v_j\|_{L^{\frac{2n(p-2)}{\epsilon}}(\mathbb{R}^n)}\|v_{\ell}\|_{W^{\frac{n-\epsilon}{2},\frac{2n}{n-\epsilon}}(\mathbb{R}^n)}\\
&\lesssim \|u\|_{H^{\frac{n}{2}}(\mathbb{R}^n)}^{p-1}.
\end{align*}
For $(iii)$, we just use the Sobolev embedding $L^\infty(\mathbb{R}^n)\hookrightarrow H^s(\mathbb{R}^n)$ to prove
\begin{align*}
\Big\|\prod_{j=1}^{p-1}v_j\Big\|_{H^s(\mathbb{R}^n)}&=\Big\|\langle\nabla\rangle^s\prod_{j=1}^{p-1}v_j\Big\|_{L^2(\mathbb{R}^n)}\\
&\lesssim \sum_{\ell=1}^{k-1}\prod_{j=1, j\neq \ell}^{p-1}\|v_j\|_{L^\infty(\mathbb{R}^n)}\|v_{\ell}\|_{H^s(\mathbb{R}^n)}\\
&\lesssim \|u\|_{H^s(\mathbb{R}^n)}^{p-1}.
\end{align*}
\end{proof}

\begin{proof}[Proof of Proposition \ref{iterated integer-power nonlinear estimates}]
We define $s_1,\cdots, s_K$ by the recursive formula
$$s_{k+1}:=1-\tfrac{n(p-2)}{2}+(p-1)s_k,$$
equivalently $s_k=\frac{n}{2}-\frac{1}{p-2}+(\frac{1}{p-2}-\frac{n-1}{2})\cdot (p-1)^k$, where $K$ is the first integer making $s_K>\frac{n}{2}$. Note that since $p$ is assumed to be contained in the $H^{1/2}$-subcritical range $(2,\frac{2n}{n-1})$, the coefficient $\frac{1}{p-2}-\frac{n-1}{2}$ is strictly positive. Hence, such a $K$ exists. Here, if $s_{K-1}=\frac{n}{2}$, then replacing $s_{K-1}$ and $S_K$ by slightly less numbers, we may assume that $s_{K-1}<\frac{n}{2}<s_K$. For $k\geq K$, we define
$$s_{k+1}:=s_k+1.$$
By construction, $s_k\to\infty$ as $k\to\infty$. The inequality \eqref{eq: iterated integer-power nonlinear estimates} can be proved by Lemma \ref{integer-power nonlinear estimates} $(i)$ when $j\leq K-1$ $(\Rightarrow s_j<\frac{n}{2})$, while it can be proved by Lemma \ref{integer-power nonlinear estimates} $(ii)$ or $(iii)$ when $j\geq K$ $(\Rightarrow s_j\geq\frac{n}{2})$.
\end{proof}

\section{A uniform higher Sobolev norm bound on the ground state}\label{sec-3}

We denote by $u_c$ a ground state solution to the pseudo-relativistic NLS (or NLH), 
$$P_c(D)u=\mathcal{N}(u),$$
where $P_c(D)=\sqrt{-c^2\Delta+\tfrac{1}{4}c^4}-\tfrac{c^2}{2}+1$ and the nonlinearity is given by \eqref{nonlinearity'}. We recall that the ground state $u_c$ is uniformly bounded in $H^{1/2}$. 
See Lemma 3 in Lenzmann \cite{L2} for NLH and Lemma 5.2 in Choi-Seok \cite{CS} for NLS with $n \geq 2$. Arguing similarly to \cite{CS}, this still holds for NLS with $n = 1$.
\begin{lem}[Uniform $H^{1/2}$-bound]\label{uniform H 1/2 bound}
$$\sup_{c\geq 1}\|u_c\|_{H^{1/2}(\mathbb{R}^n)}<\infty.$$
\end{lem}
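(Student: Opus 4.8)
The plan is to combine elementary pointwise bounds on the symbol $P_c(\xi)$ with the variational characterization of the ground state. The starting point is the two–sided estimate
\[
\langle\xi\rangle\lesssim P_c(\xi)\le\langle\xi\rangle^2\qquad\text{for all }\xi\in\R^n\text{ and all }c\ge1,
\]
with implied constants independent of $c$ (here $\langle\xi\rangle=(1+|\xi|^2)^{1/2}$). Writing $P_c(\xi)-1=\frac{c^2|\xi|^2}{\sqrt{c^2|\xi|^2+c^4/4}+c^2/2}$, the upper bound follows from $\sqrt{c^2|\xi|^2+c^4/4}\le|\xi|^2+c^2/2$ (which squares to $0\le|\xi|^4$), and the lower bound from $\sqrt{c^2|\xi|^2+c^4/4}\le c|\xi|+c^2/2$ together with the case split $|\xi|\le c$ versus $|\xi|\ge c$ (using $c\ge1$ in the latter). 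Passing to the associated quadratic forms, this yields, uniformly in $c\ge1$,
\[
\|u\|_{H^{1/2}(\R^n)}^2\lesssim\int_{\R^n}P_c(D)u\,\overline{u}\,dx,\qquad\int_{\R^n}P_c(D)\phi\,\overline{\phi}\,dx\le\|\phi\|_{H^1(\R^n)}^2 .
\]

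Next I would exploit that $u_c$ is a critical point of $I_c$: testing $I_c'(u_c)=0$ against $u_c$ gives $\int P_c(D)u_c\,\overline{u_c}=\int\mathcal N(u_c)\overline{u_c}$, whence $I_c(u_c)=(\tfrac12-\tfrac1p)\int P_c(D)u_c\,\overline{u_c}$. Since $p>2$, combining this with the coercivity in the first display yields $\|u_c\|_{H^{1/2}(\R^n)}^2\lesssim I_c(u_c)$. The problem therefore reduces to establishing a bound $I_c(u_c)\le C$ with $C$ independent of $c$.

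For this upper bound I would use that $u_c$ is a \emph{ground state}, i.e.\ a least–energy nonzero critical point, and that its energy coincides with the Nehari level $m_c:=\inf\{I_c(v):v\ne0,\ \langle I_c'(v),v\rangle=0\}$ — this is exactly how the ground states are produced in \cite{CS,L2} (alternatively, it is the standard fact that the Nehari manifold is a natural constraint). Fix once and for all $\phi\in C_c^\infty(\R^n)$ with $\phi\ge0$ and $\phi\not\equiv0$. By homogeneity of the nonlinearity — $\int\mathcal N(t\phi)\overline{t\phi}=t^p\int\phi^p\,dx$ in the power case and $t^4\int(|x|^{-1}*\phi^2)\phi^2\,dx$ in the Hartree case — there is a unique $t_c>0$ with $t_c\phi\in\mathcal N_c$, and the second display above forces $t_c^{p-2}=\big(\int P_c(D)\phi\,\overline{\phi}\big)\big/\big(\int\mathcal N(\phi)\overline{\phi}\big)\le\|\phi\|_{H^1}^2\big/\int\mathcal N(\phi)\overline{\phi}$, so $t_c$ is bounded uniformly in $c\ge1$. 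Consequently
\[
I_c(u_c)=m_c\le I_c(t_c\phi)=\Big(\tfrac12-\tfrac1p\Big)t_c^2\int P_c(D)\phi\,\overline{\phi}\,dx\le\Big(\tfrac12-\tfrac1p\Big)t_c^2\|\phi\|_{H^1}^2\le C ,
\]
uniformly in $c$, which together with the previous step completes the proof.

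The algebraic symbol bounds are entirely routine; the one delicate point is the identification $I_c(u_c)=m_c$ uniformly in $c$, but since this is precisely what underlies the construction of ground states in \cite{CS,L2}, it can simply be quoted. Note finally that both nonlinearities are handled at once by the same homogeneity computation (with $p=4$ in the Hartree case), and that the test function $\phi$ — hence every constant above — is chosen independently of $c$.
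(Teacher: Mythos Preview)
Your proof is correct. Note that the paper does not actually prove this lemma: it merely recalls the bound from Lemma~3 of \cite{L2} (Hartree case) and Lemma~5.2 of \cite{CS} (power case), so there is no ``paper's own proof'' to compare against. Your argument---the coercivity $\|u\|_{H^{1/2}}^2\lesssim\int P_c(D)u\,\overline{u}$ from the lower symbol bound, the Nehari identity $I_c(u_c)=(\tfrac12-\tfrac1p)\int P_c(D)u_c\,\overline{u_c}$, and the uniform upper bound on the ground-state level via a fixed $H^1$ test function together with $P_c(\xi)\le\langle\xi\rangle^2$---is exactly the standard variational route underlying those references. The one step you flag as delicate, the identification of $I_c(u_c)$ with the Nehari infimum, is indeed how the ground states are produced in \cite{CS,L2}, so quoting it is legitimate.
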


The purpose of this section is to show that the low Sobolev norm uniform bound in Lemma \ref{uniform H 1/2 bound} can be upgraded to a higher Sobolev norm uniform bound.
\begin{prop}[Uniform high Sobolev norm bound]\label{Uniform Sobolev norm bounds for the pseudo-relativistic equations}
$$\sup_{c\geq 1}\|u_c\|_{H^s(\mathbb{R}^n)}<\infty,\quad\forall s\geq\tfrac{1}{2}.$$
\end{prop}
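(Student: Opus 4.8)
The plan is to upgrade the uniform $H^{1/2}$-bound of Lemma~\ref{uniform H 1/2 bound} by a bootstrap along the strictly increasing, divergent sequence $\{s_k\}$ produced by Proposition~\ref{iterated Hartree nonlinear estimates} (for NLH) or Proposition~\ref{iterated integer-power nonlinear estimates} (for NLS), using that $P_c(D)^{-1}$ gains one derivative \emph{uniformly} in $c\geq1$.

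The one new ingredient is the symbol lower bound $P_c(\xi)\gtrsim\langle\xi\rangle$, uniformly in $\xi\in\R^n$ and $c\geq1$. First I would write
\[
P_c(\xi)=\frac{c^2|\xi|^2}{\sqrt{c^2|\xi|^2+\tfrac{c^4}{4}}+\tfrac{c^2}{2}}+1\geq\frac{c|\xi|^2}{|\xi|+c}+1
\]
using $\sqrt{a+b}\leq\sqrt a+\sqrt b$, and then split into three regimes: for $|\xi|\leq1$ one has $P_c(\xi)\geq1\sim\langle\xi\rangle$; for $1\leq|\xi|\leq c$ one has $\tfrac{c|\xi|^2}{|\xi|+c}\geq\tfrac{|\xi|^2}{2}\gtrsim\langle\xi\rangle$; and for $|\xi|\geq c$ one has $\tfrac{c|\xi|^2}{|\xi|+c}\geq\tfrac{c|\xi|}{2}\geq\tfrac{|\xi|}{2}\gtrsim\langle\xi\rangle$ (here $c\geq1$ is used). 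Hence, for every $\sigma\geq0$, the Fourier multiplier $\langle\xi\rangle^\sigma P_c(\xi)^{-1}$ is bounded by a $c$-independent constant times $\langle\xi\rangle^{\sigma-1}$.

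Then I would run the induction. Writing the equation as $\widehat{u_c}(\xi)=P_c(\xi)^{-1}\widehat{\mathcal N(u_c)}(\xi)$, the multiplier bound gives, for every $\sigma\geq0$,
\[
\|u_c\|_{H^\sigma(\R^n)}=\big\|\langle\xi\rangle^\sigma P_c(\xi)^{-1}\widehat{\mathcal N(u_c)}\big\|_{L^2}\lesssim\|\mathcal N(u_c)\|_{H^{\sigma-1}(\R^n)}
\]
uniformly in $c\geq1$. The base case $\sup_{c\geq1}\|u_c\|_{H^{s_0}}=\sup_{c\geq1}\|u_c\|_{H^{1/2}}<\infty$ is Lemma~\ref{uniform H 1/2 bound}. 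Assuming $\sup_{c\geq1}\|u_c\|_{H^{s_k}}<\infty$, Proposition~\ref{iterated Hartree nonlinear estimates} (resp. Proposition~\ref{iterated integer-power nonlinear estimates}), applied with every factor equal to $u_c$, gives $\sup_{c\geq1}\|\mathcal N(u_c)\|_{H^{s_{k+1}-1}}<\infty$; elliptic regularity for fixed $c$ first places $u_c\in H^{s_{k+1}}$, and the displayed inequality with $\sigma=s_{k+1}$ then yields $\sup_{c\geq1}\|u_c\|_{H^{s_{k+1}}}<\infty$. Since $s_k\to\infty$ and $\|\cdot\|_{H^s}\leq\|\cdot\|_{H^{s_k}}$ whenever $s\leq s_k$, this proves $\sup_{c\geq1}\|u_c\|_{H^s(\R^n)}<\infty$ for all $s\geq\tfrac12$.

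I expect the main obstacle to be precisely the uniform-in-$c$ symbol estimate in the second paragraph; the remaining steps are a routine iteration. The crucial structural point is that \emph{all} the constants involved — those in the iterated nonlinear estimates of Section~\ref{sec-2} and the one in the multiplier bound — are independent of $c$, which is what converts the (for each fixed $c$) finite regularity of $u_c$ into a bound uniform in $c$.
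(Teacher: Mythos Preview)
Your proof is correct and follows essentially the same approach as the paper: the paper packages the symbol lower bound as Lemma~\ref{symbol bound} and the bootstrap as a general elliptic-regularity statement (Theorem~\ref{elliptic regularity}), but the content is identical to what you wrote. Your derivation of $P_c(\xi)\gtrsim\langle\xi\rangle$ via rationalizing the conjugate and a three-regime split is a minor variant of the paper's two-regime Taylor-expansion argument, and your iteration is exactly the paper's $\|u\|_{H^{s_{k+1}}}\lesssim\|\mathcal N(u)\|_{H^{s_{k+1}-1}}\lesssim\|u\|_{H^{s_k}}^{p-1}$ chain.
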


Proposition \ref{Uniform Sobolev norm bounds for the pseudo-relativistic equations} improves the $H^1$-uniform bound for NLH in \cite{L2} and that for NLS in \cite{CS}.
Although we believe that this can be obtained by further developing the method employed in \cite{L2,CS}, we shall not follow this approach.
Instead, we pay attention to a key observation that the symbol of the linear part of the equation has a uniform lower bound (Lemma \ref{symbol bound}), and therefore we may apply a rather general elliptic regularity theorem (Theorem \ref{elliptic regularity}) for the desired high Sobolev norm bound. We found that this approach is algebraically simpler in particular when $s$ is large.

\subsection{Uniform bound on the symbol}
The pseudo-relativistic NLS (or NLH) contains the differential operator $P_c(D)$ on the left hand side, whose symbol is 
$$P_c(\xi)=\sqrt{c^2|\xi|^2+\tfrac{1}{4}c^4}-\tfrac{c^2}{2}+1.$$
This symbol obeys a uniform lower bound of order 1.
\begin{lem}[Uniform lower bound on the symbol $P_c$]\label{symbol bound}
There exists a constant $K>0$ such that
$$P_c(\xi)\geq K\langle\xi\rangle,\quad\forall\xi\in\mathbb{R}^n.$$
\end{lem}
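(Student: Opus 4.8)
The plan is to reduce the lemma to an elementary pointwise estimate on the scalar symbol $P_c(\xi)$, the key device being to rationalize the square root. Writing $\sqrt{c^2|\xi|^2+\tfrac{1}{4}c^4}=c\sqrt{|\xi|^2+\tfrac{1}{4}c^2}$ and multiplying and dividing by the conjugate gives the exact identity
$$P_c(\xi)=\frac{c|\xi|^2}{\sqrt{|\xi|^2+\tfrac{1}{4}c^2}+\tfrac{c}{2}}+1.$$
In particular $P_c(\xi)\geq 1$ for every $\xi$, which already handles the bounded-frequency region $|\xi|\leq 1$, where $\langle\xi\rangle\leq\sqrt{2}$. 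So it remains only to produce a bound of the form $P_c(\xi)\gtrsim|\xi|$ for $|\xi|\geq 1$, with implied constant independent of $c\geq 1$.

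For that I would estimate the denominator crudely by $\sqrt{|\xi|^2+\tfrac{1}{4}c^2}+\tfrac{c}{2}\leq|\xi|+c$, so that $P_c(\xi)\geq\frac{c|\xi|^2}{|\xi|+c}+1$, and then split into two regimes. If $|\xi|\leq c$ then $|\xi|+c\leq 2c$, hence $P_c(\xi)\geq\tfrac{1}{2}|\xi|^2+1\geq\tfrac{1}{2}\langle\xi\rangle^2\geq\tfrac{1}{2}\langle\xi\rangle$. If $|\xi|\geq c$ then $|\xi|+c\leq 2|\xi|$ and, using $c\geq 1$, $P_c(\xi)\geq\tfrac{c}{2}|\xi|+1\geq\tfrac{1}{2}(|\xi|+1)\geq\tfrac{1}{2}\langle\xi\rangle$, the last step being just $(|\xi|+1)^2\geq|\xi|^2+1$. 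Putting the two regimes together with the trivial bound on $|\xi|\leq1$ yields $P_c(\xi)\geq\tfrac{1}{2}\langle\xi\rangle$ for all $\xi$, so one may take $K=\tfrac{1}{2}$.

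There is essentially no obstacle here; the one point requiring care is that $K$ must not degrade as $c\to\infty$. This is precisely what the rationalized form buys: the dangerous near-cancellation $\sqrt{c^2|\xi|^2+\tfrac{1}{4}c^4}-\tfrac{c^2}{2}$, which for bounded $|\xi|$ is a difference of two quantities of size $O(c^2)$, is resolved algebraically into the manifestly nonnegative quotient displayed above, and the case split according to whether $|\xi|\leq c$ or $|\xi|\geq c$ keeps that quotient comparable to $\min(|\xi|^2,|\xi|)$ uniformly in the parameter.
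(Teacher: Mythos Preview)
Your proof is correct and follows essentially the same approach as the paper: both split into the regimes $|\xi|\leq c$ and $|\xi|\geq c$, and in the high-frequency regime the two arguments are virtually identical. The only difference is that for $|\xi|\leq c$ the paper invokes the mean value theorem on $f(t)=\sqrt{1+t}-1$ to obtain $f(t)\geq t/(2\sqrt{5})$ for $0\leq t\leq4$, whereas you use the rationalized form $P_c(\xi)=\frac{c|\xi|^2}{\sqrt{|\xi|^2+c^2/4}+c/2}+1$ directly; your version is slightly cleaner since it avoids calculus altogether, but the two are morally the same device for controlling the near-cancellation.
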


\begin{proof}
Factoring out $\frac{c^4}{4}$ from the square root term in the symbol, we write
$$P_c(\xi)=\frac{c^2}{2}\Big(\sqrt{1+\Big|\frac{2\xi}{c}\Big|^2}-1\Big)+1=\frac{c^2}{2}\cdot f\big(|\tfrac{2\xi}{c}|^2\big)+1,$$
where
$$f(t):=\sqrt{1+t}-1.$$
By the first degree Taylor expansion, if $0\leq t\leq 4$, then
\begin{align*}
f(t):&=\sqrt{1+t}-1=f(0)+f'(t_*) t\quad\textup{for some }t_*\in[0,4]\\
&=\frac{t}{2\sqrt{1+t_*}}\geq\frac{t}{2\sqrt{5}}.
\end{align*}
Hence, if $|\xi|\leq c$, then
\begin{align*}
P_c(\xi)&=\frac{c^2}{2}\Big(\sqrt{1+\Big|\frac{2\xi}{c}\Big|^2}-1\Big)+\mu=\frac{c^2}{2}f\big(|\tfrac{2\xi}{c}|^2\big)+\mu\\
&\geq \frac{c^2}{\sqrt{5}}\Big|\frac{2\xi}{c}\Big|^2+1\geq \frac{4|\xi|^2}{\sqrt{5}}+1\gtrsim\langle\xi\rangle.
\end{align*}
On the other hand, if $|\xi|\geq c$, then we have
\begin{align*}
P_c(\xi)&=c|\xi|\sqrt{1+\Big(\frac{c}{2|\xi|}\Big)^2}-\frac{c^2}{2}+1\geq c|\xi|-\frac{c^2}{2}+1\\
&=c|\xi|\Big(1-\frac{c}{2|\xi|}\Big)+1\geq\frac{c|\xi|}{2}+1\geq\frac{|\xi|}{2}+1\geq\frac{\langle\xi\rangle}{2}.
\end{align*}
\end{proof}

\begin{rem}
By Lemma \ref{symbol bound}, one may formally say that $P_c(D)^{-1}\lesssim\langle\nabla\rangle^{-1}$, or $\frac{\langle\nabla\rangle}{P_c(D)}\lesssim 1$. However, rigorously, these formal inequalities make sense only as an operator on the $L^2$-based Sobolev space $H^s$. Indeed, the symbol $\frac{\langle\xi\rangle}{P_c(\xi)}$ is not a Mikhlin multiplier, because the derivative of the symbol $\frac{\langle\xi\rangle}{P_c(\xi)}$ is not bounded uniformly in $c\geq1$. Thus, it is not necessary that $\frac{\langle\nabla\rangle}{P_c(D)}$ is bounded on $L^p$ for $p\geq 2$.
\end{rem}

\subsection{Elliptic regularity}
Now we consider a general class of nonlinear elliptic equations of the form
\begin{equation}\label{general elliptic PDE}
P(D)u=\mathcal{N}(u),
\end{equation}
where $u:\mathbb{R}^n\to\mathbb{R}$, the differential operator $P(D)$ is the multiplier operator with the symbol $P:\mathbb{R}^n\to\mathbb{R}$, i.e.,
$$\widehat{P(D)u}(\xi)=P(\xi)\widehat{u}(\xi),$$
and the nonlinear term is given by \eqref{nonlinearity'}.


The following theorem asserts that an $H^{1/2}$-solution to \eqref{general elliptic PDE} can be upgraded to an $H^s$-solution for any $s\geq\frac{1}{2}$, and moreover its higher Sobolev norm bound depends only on the order and the $H^{1/2}$ Sobolev norm bound.

\begin{thm}[Elliptic regularity]\label{elliptic regularity}
Suppose that
\begin{equation}\label{general symbol bound}
P(\xi)\geq K\langle\xi\rangle.
\end{equation}
Let $u$ be a solution of \eqref{general elliptic PDE} satisfying
$$\|u\|_{H^{1/2}(\mathbb{R}^n)}\leq B<\infty.$$
Then, 
\begin{equation}\label{eq: elliptic regularity}
\|u\|_{H^s(\mathbb{R}^n)}\leq C_{s},\quad\forall s\geq\tfrac{1}{2},
\end{equation}
where the constant $C_s$ depend only on $s, n, B, K$ (and $p$).
\end{thm}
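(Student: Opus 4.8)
The plan is to run a bootstrap argument along the increasing sequence $\{s_k\}$ produced by Proposition \ref{iterated Hartree nonlinear estimates} (for NLH) or Proposition \ref{iterated integer-power nonlinear estimates} (for the integer-power NLS). Since $\|u\|_{H^{s_0}} = \|u\|_{H^{1/2}} \le B$, I would show inductively that $\|u\|_{H^{s_k}} \le C_{s_k}$ for every $k$, with $C_{s_k}$ depending only on the previous constant $C_{s_{k-1}}$ together with $n, K$ and (for the power case) $p$. The inductive step uses the equation $P(D)u = \mathcal{N}(u)$: applying $\langle\nabla\rangle^{s_{k+1}}$ and dividing by the symbol,
\[
\|u\|_{H^{s_{k+1}}} = \Big\|\langle\nabla\rangle^{s_{k+1}} P(D)^{-1}\mathcal{N}(u)\Big\|_{L^2} \le \frac1K \Big\|\langle\nabla\rangle^{s_{k+1}-1}\mathcal{N}(u)\Big\|_{L^2} = \frac1K\|\mathcal{N}(u)\|_{H^{s_{k+1}-1}},
\]
where the inequality is exactly the pointwise bound $\langle\xi\rangle^{s_{k+1}}/P(\xi) \le K^{-1}\langle\xi\rangle^{s_{k+1}-1}$ coming from \eqref{general symbol bound}. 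Then the iterated nonlinear estimate gives $\|\mathcal{N}(u)\|_{H^{s_{k+1}-1}} \lesssim \|u\|_{H^{s_k}}^{q}$ (with $q=3$ for Hartree, $q=p-1$ for the power case), so $\|u\|_{H^{s_{k+1}}} \le \frac{C}{K}\, C_{s_k}^{q} =: C_{s_{k+1}}$, closing the induction.

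Once $\|u\|_{H^{s_k}}\le C_{s_k}$ is established for all $k$, a general $s\ge \tfrac12$ is handled by choosing $k$ with $s_k \ge s$ and using the trivial embedding $H^{s_k}\hookrightarrow H^s$ with norm $1$, so $C_s := C_{s_k}$ works; since $s_k\to\infty$ this covers every order. The constants are manifestly built only from $B, K, n$ (and $p$), since each $C_{s_{k+1}}$ is an explicit function of $C_{s_k}$ and the implicit constants in Propositions \ref{iterated Hartree nonlinear estimates}/\ref{iterated integer-power nonlinear estimates}, which depend only on $n$ and $p$.

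There is one genuine subtlety, which I regard as the main point to be careful about rather than a deep obstacle: the bootstrap only makes sense if we already know $u \in H^{s_{k+1}}$ before estimating its norm — otherwise the manipulation $\|u\|_{H^{s_{k+1}}} = \|\langle\nabla\rangle^{s_{k+1}}P(D)^{-1}\mathcal{N}(u)\|_{L^2}$ could be vacuous (infinite on both sides). To handle this rigorously I would truncate: for $N>0$ apply the Fourier cutoff $P_{\le N}$ (projection onto $|\xi|\le N$), note $\|P_{\le N} u\|_{H^{s_{k+1}}} \le \frac1K \|P_{\le N}\mathcal{N}(u)\|_{H^{s_{k+1}-1}} \le \frac1K\|\mathcal{N}(u)\|_{H^{s_{k+1}-1}}$, obtain a bound uniform in $N$, and let $N\to\infty$ by monotone convergence to conclude both $u\in H^{s_{k+1}}$ and the norm bound simultaneously. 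With this caveat the argument is routine; the only inputs are the symbol lower bound (Lemma \ref{symbol bound}, here abstracted to \eqref{general symbol bound}) and the iterated nonlinear estimates of Section \ref{sec-2}.
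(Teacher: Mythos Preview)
Your argument is correct and matches the paper's proof essentially line for line: bootstrap along the sequence $\{s_k\}$ from Propositions \ref{iterated Hartree nonlinear estimates}/\ref{iterated integer-power nonlinear estimates}, using the symbol lower bound \eqref{general symbol bound} to gain one derivative and the iterated nonlinear estimate to close the induction. The only cosmetic differences are that the paper cites interpolation rather than the monotone embedding $H^{s_k}\hookrightarrow H^s$ for intermediate $s$, and it omits the Fourier-cutoff justification you added for a priori finiteness.
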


\begin{proof}
By interpolation, it suffices to show that there exists a sequence $\{s_k\}_{k=0}^\infty$, with $s_0=\frac{1}{2}$ and $s_k\to\infty$, such that \eqref{eq: elliptic regularity} holds for $s=s_k$. Indeed, if the sequence $\{s_k\}_{k=0}^\infty$ is taken from Proposition \ref{iterated Hartree nonlinear estimates} (resp., \ref{iterated integer-power nonlinear estimates}) 
for the Hartree nonlinearity (resp., the integer-power nonlinearity), then
\begin{align*}
\|u\|_{H^{s_{k+1}}(\mathbb{R}^n)}&=\|P(D)^{-1}\mathcal{N}(u)\|_{H^{s_{k+1}}(\mathbb{R}^n)}\quad\textup{(by \eqref{general elliptic PDE})}\\
&\lesssim_K \|\mathcal{N}(u)\|_{H^{s_{k+1}-1}(\mathbb{R}^n)}\quad\textup{(by \eqref{general symbol bound})}\\
&=\Biggl\{\begin{aligned}
&\Big\|\Big(\frac{1}{|x|}*u^2\Big)u\Big\|_{H^{s_{k+1}-1}(\mathbb{R}^3)}\lesssim\|u\|_{H^{s_k}(\mathbb{R}^3)}^3&&\textup{(by Proposition \ref{iterated Hartree nonlinear estimates})}\\
&\|u^{p-1}\|_{H^{s_{k+1}-1}(\mathbb{R}^n)}\lesssim\|u\|_{H^{s_k}(\mathbb{R}^n)}^{p-1}&&\textup{(by Proposition \ref{iterated integer-power nonlinear estimates})},
\end{aligned}
\Biggr.
\end{align*}
depending on the nonlinearity. Iterating this inequality from $s_0=\frac{1}{2}$, we complete the proof.
\end{proof}

\subsection{Proof of Proposition \ref{Uniform Sobolev norm bounds for the pseudo-relativistic equations}}
Since the set of symbols $\{P_c\}_{c\geq1}$ enjoys the uniform lower bound as in Lemma \ref{symbol bound}, we may apply Theorem \ref{elliptic regularity} to obtain the desired uniform high Sobolev norm bound.

\section{The sharp convergence rate}\label{sec-10}

We denote by $u_\infty \in H^1(\R^n)$ the positive radial ground state solution to
\begin{equation}\label{eq-s-3}
-\Delta u + u = \mathcal{N}(u),
\end{equation}
and let $u_c \in H^{1}(\R^n)$ be a family of positive radial ground state solutions to
\begin{equation}\label{eq-s-4}
P_c(D) u=\Big(\sqrt{-c^2\Delta +\tfrac{c^4}{4}}-\tfrac{c^2}{2}\Big)u_c + u_c = \mathcal{N}(u),
\end{equation}
such that $u_c$ converges $u_\infty$ in $H^1 (\mathbb{R}^n)$ as $c \rightarrow \infty$.

In this section, we obtain the optimal $O(\frac{1}{c^2})$-rate convergence of $u_c$ to the nonrelativistic limit $u_\infty$ in the $H^1 (\mathbb{R}^n)$ space. 

\begin{prop}[Rate of convergence]\label{prop: rate of convergence}
Assume that the nonlinearity $\mathcal{N}(u)$ is either of the Hartree-type or of the power-type $|u|^{p-2}u$ with $p \in (2,\, \frac{n-1}{2n}) \cap \mathbb{N}$. Then, 
\begin{equation}
\| u_c - u_\infty\|_{H^1(\mathbb{R}^n)}=O\left(\frac{1}{c^2}\right).
\end{equation}
\end{prop}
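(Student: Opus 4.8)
The plan is to compare $u_c$ and $u_\infty$ through the difference of the two equations and exploit the coercivity of the linearized operator $L_{u_\infty}$ on the radial class. First I would write $w_c := u_c - u_\infty$ and subtract \eqref{eq-s-3} from \eqref{eq-s-4}, obtaining
\[
P_c(D)w_c + \big(P_c(D) - (-\Delta+1)\big)u_\infty = \mathcal{N}(u_c) - \mathcal{N}(u_\infty).
\]
Writing $\mathcal{N}(u_c) - \mathcal{N}(u_\infty) = \mathcal{N}'(u_\infty)w_c + R_c$, where $R_c$ is a quadratic-in-$w_c$ remainder controlled by $\|w_c\|_{H^1}^2$ (using the uniform $H^s$ bounds of Proposition \ref{Uniform Sobolev norm bounds for the pseudo-relativistic equations} together with the nonlinear estimates of Section \ref{sec-2}), this rearranges to
\[
L_{u_\infty}[w_c] = -\big(P_c(D) - (-\Delta+1)\big)u_\infty + \big(P_c(D)-(-\Delta+1)\big)w_c + R_c + \big((-\Delta+1) - P_c(D)\big)w_c.
\]
More cleanly: $(-\Delta+1)w_c - \mathcal{N}'(u_\infty)w_c = E_c(u_\infty) + E_c(w_c) + R_c$, where $E_c := (-\Delta + 1) - P_c(D)$ is the symbol-error operator. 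The key linear input is the Taylor expansion $P_c(\xi) - (|\xi|^2+1) = -\tfrac{\hbar^4|\xi|^4}{8m^3c^2}+\cdots$, which, after normalization, gives $|P_c(\xi) - (|\xi|^2+1)| \lesssim c^{-2}|\xi|^4 \cdot \langle 2\xi/c\rangle^{-?}$ — more precisely $0 \le (|\xi|^2+1) - P_c(\xi) \lesssim c^{-2}|\xi|^4$ for $|\xi| \le c$, and one checks the bound persists globally in the form needed. Hence $\|E_c(u_\infty)\|_{L^2} \lesssim c^{-2}\|u_\infty\|_{H^4} \lesssim c^{-2}$, using that $u_\infty \in H^4$ by elliptic regularity for the limit equation.

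Next I would test this identity against $w_c$ itself after projecting onto the orthogonal complement of $u_\infty$ in $H^1$. Decompose $w_c = \alpha_c u_\infty + w_c^\perp$ with $w_c^\perp \perp u_\infty$ in $H^1$; since $\|w_c\|_{H^1} \to 0$, the scalar $\alpha_c$ is small, and the normalization/Pohozaev-type identities for the ground states (or a direct argument using that both solve their respective equations with the same mass-like constraint) force $|\alpha_c| \lesssim c^{-2} + \|w_c^\perp\|_{H^1}^2 + \|w_c^\perp\|_{H^1}c^{-2}$. On $w_c^\perp$ the non-degeneracy coercivity
\[
c_0\|w_c^\perp\|_{H^1}^2 \le \int_{\R^n} |\nabla w_c^\perp|^2 + |w_c^\perp|^2 - \mathcal{N}'(u_\infty)|w_c^\perp|^2\, dx = \big\langle L_{u_\infty}[w_c^\perp], w_c^\perp\big\rangle_{L^2}
\]
converts the identity into
\[
c_0\|w_c^\perp\|_{H^1}^2 \lesssim \big|\langle E_c(u_\infty), w_c^\perp\rangle\big| + \big|\langle E_c(w_c), w_c^\perp\rangle\big| + \big|\langle R_c, w_c^\perp\rangle\big| + (\text{terms from }\alpha_c).
\]
The first term is $\lesssim c^{-2}\|w_c^\perp\|_{H^1}$; the remainder term is $\lesssim \|w_c\|_{H^1}^2\|w_c^\perp\|_{H^1} = o(1)\|w_c^\perp\|_{H^1}^2$ and is absorbed for large $c$; the $\alpha_c$-terms are likewise higher order. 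The one requiring care is $\langle E_c(w_c), w_c^\perp\rangle$: here I cannot afford to pay $\|w_c\|_{H^4}$, but $E_c$ has symbol bounded (not small) by $\min(c^{-2}|\xi|^4, |\xi|^2+1) \lesssim \langle\xi\rangle^{2}$ with an extra gain on low frequencies; splitting frequencies at $|\xi|\sim c^{1/2}$, the low part contributes $\lesssim c^{-1}\|w_c\|_{H^1}\|w_c^\perp\|_{H^1}$ (gain $c^{-2}\cdot c = c^{-1}$) and the high part is $\lesssim \|w_c\|_{\dot H^1}\|w_c^\perp\|_{\dot H^1}$ restricted to $|\xi|\gtrsim c^{1/2}$, which is small by the uniform $H^{1+\delta}$ bound (a frequency tail of a set uniformly bounded in a slightly higher norm). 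Either way this term is $o(1)\|w_c^\perp\|_{H^1}^2 + O(c^{-2})\|w_c^\perp\|_{H^1}$.

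Dividing through by $\|w_c^\perp\|_{H^1}$ then yields $\|w_c^\perp\|_{H^1} \lesssim c^{-2}$, and feeding this back into the estimate for $\alpha_c$ gives $|\alpha_c|\lesssim c^{-2}$, hence $\|w_c\|_{H^1}\lesssim c^{-2}$, which is the claimed upper bound. I expect the main obstacle to be the rigorous handling of the $\alpha_c$ (kernel-direction) component: one must show the $u_\infty$-component of $w_c$ is genuinely $O(c^{-2})$ and not merely $o(1)$, which needs an honest second identity — either the $L^2$-norm relation between $u_c$ and $u_\infty$ coming from testing the equations against suitable multipliers, or a Lyapunov--Schmidt-style bookkeeping — rather than the coercivity inequality, which says nothing in the kernel direction. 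The frequency-splitting estimate for $E_c(w_c)$ is the second delicate point, but it is purely technical given the uniform higher-Sobolev bounds already in hand.
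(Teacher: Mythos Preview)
Your approach is essentially the same as the paper's --- decompose $w_c=\alpha_c u_\infty+w_c^\perp$, use non-degeneracy coercivity on $w_c^\perp$, and control $\alpha_c$ by a separate identity --- but you create an avoidable complication in the handling of the symbol-error term. You write $L_{u_\infty}[w_c]=E_c(u_\infty)+E_c(w_c)+R_c$ and then struggle with $E_c(w_c)$ via frequency splitting, needing uniform $H^5$ bounds on $u_c$ to make the high-frequency tail $O(c^{-2})$. The paper instead keeps the error term as $E_c(u_c)$ (not split): from $(-\Delta+1)u_c=E_c(u_c)+P_c(D)u_c=E_c(u_c)+\mathcal N(u_c)$ one gets directly $L_{u_\infty}[w_c]=E_c(u_c)+R_c$, and then the global symbol bound $|(|\xi|^2+1)-P_c(\xi)|\lesssim c^{-2}|\xi|^4$ combined with the uniform $H^3$ bound on $u_c$ from Proposition~\ref{Uniform Sobolev norm bounds for the pseudo-relativistic equations} gives $\|E_c(u_c)\|_{H^{-1}}=O(c^{-2})$ in one line. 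Your frequency-splitting detour works, but it is not needed.

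For the kernel component $\alpha_c$, which you correctly flag as the main obstacle, the paper's ``second identity'' is simply to take the $L^2$ inner product of $(-\Delta+1)u_c-\mathcal N(u_c)$ with $u_\infty$: the orthogonality $\langle v,u_\infty\rangle_{H^1}=0$ kills the $(-\Delta+1)v$ contribution, and after using $\mathcal N'(u_\infty)u_\infty=(p-1)\mathcal N(u_\infty)=(p-1)(-\Delta+1)u_\infty$ one isolates $\lambda(p-2)\|u_\infty\|_{H^1}^2=O(c^{-2})+O(\|w_c\|_{H^1}^2)$. No Pohozaev or mass-constraint argument is needed; it is purely the equation for $u_\infty$ plus $H^1$-orthogonality.
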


The key ingredient of the proof of Proposition \ref{prop: rate of convergence} is the non-degeneracy of the linearized operator
$$\mathcal{L}:=-\Delta +1 -\mathcal{N}'(u_\infty)$$
for radial functions. Here the mapping $\mathcal{N}' (u_{\infty}): H^1(\mathbb{R}^n) \mapsto H^{-1} (\mathbb{R}^n)$ is given by
\begin{align*}
\mathcal{N}'(u_\infty)v =\left\{\begin{aligned}
&\Big(\frac{1}{|x|}*u_\infty^2\Big)v+2u_\infty\Big(\frac{1}{|x|}*(u_\infty v)\Big)&&\textup{(for NLH)}\\
&(p-1)u_\infty^{p-2}\,v &&\textup{(for NLS)}.
\end{aligned}
\right.
\end{align*}

\begin{lem}[Non-degeneracy \cite{Kw, L2}]\label{lem: non-degeneracy}
There exists a small $d > 0$ such that 
\[
\int_{\R^n}|\nabla v|^2 +v^2 -\mathcal{N}'(u_\infty)v^2\,dx\geq d\|v\|_{H^1(\mathbb{R}^n)}^2
\]
for every radial $v \in H^1(\R^n)$ such that $\langle u, v\rangle_{H^1} = 0$.
\end{lem}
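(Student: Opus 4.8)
The plan is to read the asserted coercivity as a spectral statement about the self-adjoint operator $\mathcal{L}=-\Delta+1-\mathcal{N}'(u_\infty)$ on the radial space $L^2_r(\R^n)$, and to feed in the non-degeneracy of $u_\infty$ in the radial class (Kwong \cite{Kw} for NLS, Lenzmann \cite{L2} for NLH), i.e.\ $0\notin\sigma(\mathcal{L}|_{L^2_r})$; working in the radial class is essential, since on all of $\R^n$ the kernel of $\mathcal{L}$ contains the non-radial functions $\partial_j u_\infty$. First I would rewrite the constraint: since $u_\infty$ solves \eqref{eq-s-3}, one has $\langle u_\infty,v\rangle_{H^1}=\langle(1-\Delta)u_\infty,v\rangle_{L^2}=\langle\mathcal{N}(u_\infty),v\rangle_{L^2}$ (reading $u=u_\infty$ in the statement), so the admissible class is $\mathcal{A}:=\{v\in H^1_r(\R^n):\langle\mathcal{N}(u_\infty),v\rangle_{L^2}=0\}$ and the quadratic form in the lemma is $Q(v):=\langle\mathcal{L}v,v\rangle_{L^2}=\int_{\R^n}|\nabla v|^2+v^2-\mathcal{N}'(u_\infty)v^2\,dx$. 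Using Hardy--Littlewood--Sobolev and H\"older as in Section \ref{sec-2} (for NLH) and the boundedness of $u_\infty^{p-2}$ (for NLS), one gets $\big|\int_{\R^n}\mathcal{N}'(u_\infty)v^2\,dx\big|\lesssim\|v\|_{L^2}^2$, hence $\|v\|_{H^1}^2\le Q(v)+C\|v\|_{L^2}^2$; so it suffices to prove the sharper $L^2$-coercivity $Q(v)\ge d_0\|v\|_{L^2}^2$ on $\mathcal{A}$, and then to take a convex combination of the two inequalities to obtain the $H^1$-statement with $d=d_0/(d_0+C)$.

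Next I would pin down $\sigma(\mathcal{L}|_{L^2_r})$. Since $u_\infty\in H^1\cap L^\infty$ decays at infinity, $\mathcal{N}'(u_\infty)$ is a relatively $(-\Delta)$-compact perturbation of $-\Delta+1$ (for NLS it is multiplication by $(p-1)u_\infty^{p-2}$ vanishing at infinity; for NLH it is multiplication by $|x|^{-1}*u_\infty^2\to0$ plus the integral operator with kernel $2u_\infty(x)u_\infty(y)/|x-y|$, which is Hilbert--Schmidt by Hardy--Littlewood--Sobolev), so $\sigma_{ess}(\mathcal{L})=[1,\infty)$ and only finitely many eigenvalues lie below $1$. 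The semigroup $e^{-t\mathcal{L}}$ is positivity preserving (for NLH because the off-diagonal kernel above is nonnegative, so $\mathcal{L}$ is a Schr\"odinger operator minus a positive integral operator), so by Perron--Frobenius the bottom eigenvalue $\lambda_1$ is simple with a strictly positive eigenfunction $\phi_1$; and since $u_\infty$ is a ground state, the classical constrained-minimization argument on the Nehari manifold shows $\mathcal{L}|_{L^2_r}$ has Morse index at most one, while $Q(u_\infty)<0$ (see below) forces it to be exactly one. Together with $0\notin\sigma(\mathcal{L}|_{L^2_r})$ this yields $\delta:=\min(\sigma(\mathcal{L}|_{L^2_r})\setminus\{\lambda_1\})>0$, i.e.\ $Q(w)\ge\delta\|w\|_{L^2}^2$ for all radial $w\perp_{L^2}\phi_1$.

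The core is then a standard codimension-one coercivity argument. Put
\[
\mu^\ast:=\inf\big\{Q(v):v\in H^1_r,\ \|v\|_{L^2}=1,\ \langle\mathcal{N}(u_\infty),v\rangle_{L^2}=0\big\};
\]
I must show $\mu^\ast>0$. Two sign facts drive it. First, $\langle\mathcal{N}(u_\infty),\phi_1\rangle_{L^2}>0$, since both functions are strictly positive. Second, testing \eqref{eq-s-3} gives $\mathcal{L}u_\infty=\mathcal{N}(u_\infty)-\mathcal{N}'(u_\infty)u_\infty=-(p-2)\mathcal{N}(u_\infty)$ for NLS and $=-2\mathcal{N}(u_\infty)$ for NLH, whence $Q(u_\infty)<0$ and $\langle\mathcal{L}^{-1}\mathcal{N}(u_\infty),\mathcal{N}(u_\infty)\rangle_{L^2}=-\tfrac{1}{p-2}\langle u_\infty,\mathcal{N}(u_\infty)\rangle_{L^2}<0$ (resp.\ $-\tfrac12\langle u_\infty,\mathcal{N}(u_\infty)\rangle_{L^2}<0$). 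Suppose $\mu^\ast\le0$. Because $\mu^\ast<1=\inf\sigma_{ess}(\mathcal{L})$, a minimizer $v_0$ exists by the usual weak-compactness argument (an escaping-mass defect would carry energy $\ge1>\mu^\ast$), and $\mathcal{L}v_0=\mu^\ast v_0+\nu\,\mathcal{N}(u_\infty)$ for a multiplier $\nu$. If $\nu=0$, then $\mathcal{L}v_0=\mu^\ast v_0$ with $\mu^\ast\le0$; since $0\notin\sigma(\mathcal{L})$ this forces $\mu^\ast=\lambda_1$ and $v_0=\pm\phi_1$, contradicting $\langle\mathcal{N}(u_\infty),v_0\rangle=0$. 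If $\nu\ne0$, then $\mu^\ast\ne\lambda_1$ (otherwise solvability would require $\langle\mathcal{N}(u_\infty),\phi_1\rangle=0$), so $v_0=\nu(\mathcal{L}-\mu^\ast)^{-1}\mathcal{N}(u_\infty)$ and the constraint reads $g(\mu^\ast)=0$, where $g(\mu):=\langle(\mathcal{L}-\mu)^{-1}\mathcal{N}(u_\infty),\mathcal{N}(u_\infty)\rangle_{L^2}$. But $g$ is smooth and strictly increasing on each component of the resolvent set below $1$ (as $g'(\mu)=\|(\mathcal{L}-\mu)^{-1}\mathcal{N}(u_\infty)\|_{L^2}^2>0$), with $g(0)<0$ and $g(\mu)\to-\infty$ as $\mu\downarrow\lambda_1$ (here $\langle\mathcal{N}(u_\infty),\phi_1\rangle\ne0$ is used), so $g<0$ on $(\lambda_1,0]$; while $g>0$ on $(-\infty,\lambda_1)$. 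Hence $g$ has no zero in $\{\mu\le0\}\setminus\{\lambda_1\}$, a contradiction. Therefore $\mu^\ast>0$, so $Q(v)\ge\mu^\ast\|v\|_{L^2}^2$ on $\mathcal{A}$, and the convex-combination step from the first paragraph finishes the lemma.

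I expect the main obstacle to be the spectral/variational package of the middle step: producing the constrained minimizer $v_0$ with the correct concentration-compactness bookkeeping relative to $\sigma_{ess}(\mathcal{L})=[1,\infty)$, and — for NLH — justifying the qualitative spectrum of the \emph{nonlocal} operator $\mathcal{L}$ (simplicity and positivity of $\phi_1$, and exactly one negative eigenvalue). The non-degeneracy hypothesis is used exactly once but decisively: it is what excludes $\mu^\ast=0$ and, via $\langle\mathcal{N}(u_\infty),\phi_1\rangle\ne0$, pins down the monotone resolvent function $g$ so that it cannot vanish on $(-\infty,0]$.
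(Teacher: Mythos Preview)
The paper does not prove this lemma; it is stated with citations to Kwong \cite{Kw} and Lenzmann \cite{L2} and used as a black box, with only the one-sentence heuristic in the introduction that non-degeneracy forces the second radial eigenvalue of $\mathcal{L}$ to be strictly positive, ``so'' the coercivity holds on $\{v\perp_{H^1}u_\infty\}$.

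Your argument actually fills in what the paper elides: the passage from ``second radial eigenvalue positive'' to coercivity on the codimension-one subspace $\{v:\langle u_\infty,v\rangle_{H^1}=0\}$, which is \emph{not} the $L^2$-orthogonal complement of the first eigenfunction $\phi_1$. Your resolvent-function device $g(\mu)=\langle(\mathcal{L}-\mu)^{-1}\mathcal{N}(u_\infty),\mathcal{N}(u_\infty)\rangle$ is a clean and correct way to do this (it is the Weinstein / Grillakis--Shatah--Strauss mechanism), and the sign bookkeeping --- $g(0)<0$ from $\mathcal{L}u_\infty=-(p-2)\mathcal{N}(u_\infty)$, $g\to-\infty$ as $\mu\downarrow\lambda_1$ since $\langle\mathcal{N}(u_\infty),\phi_1\rangle>0$, and $g>0$ on $(-\infty,\lambda_1)$ --- is right. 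The reduction from $H^1$- to $L^2$-coercivity via $\|v\|_{H^1}^2\le Q(v)+C\|v\|_{L^2}^2$ is also correct; for NLH the bound $\big|\int\mathcal{N}'(u_\infty)v^2\big|\lesssim\|v\|_{L^2}^2$ follows by writing the cross term as $2\iint u_\infty(x)v(x)|x-y|^{-1}u_\infty(y)v(y)\,dx\,dy$ and applying Hardy--Littlewood--Sobolev with exponents $(\tfrac65,\tfrac65)$ together with H\"older $\|u_\infty v\|_{L^{6/5}}\le\|u_\infty\|_{L^3}\|v\|_{L^2}$.

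The places you flag as sketchy are indeed the only soft spots, and they are not genuine gaps: for NLH, simplicity and positivity of $\phi_1$ and Morse index one for the \emph{nonlocal} $\mathcal{L}$ require the positivity-improving property of $e^{-t\mathcal{L}}$, which needs a short argument (done in \cite{L2}); and existence of the constrained minimizer under the contradiction hypothesis $\mu^\ast\le0<1=\inf\sigma_{\mathrm{ess}}$ is the standard Weyl-type dichotomy. The overall strategy is sound.
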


\begin{proof}[Proof of Proposition \ref{prop: rate of convergence}]
For the difference $w:=u_c-u_\infty$, we do algebraic manipulations with the equations for $u_c$ and $u_\infty$,
\begin{align*}
\|w\|_{H^1(\mathbb{R}^n)}^2&=\int_{\mathbb{R}^n}(-\Delta+1)w\cdot w\ dx\\
&=\int_{\mathbb{R}^n}(-\Delta+1)(u_c-u_\infty)\cdot w\ dx\\
&=\int_{\mathbb{R}^n}\Big\{(-\Delta+1-P_c(D))u_c+P_c(D)u_c-(-\Delta+1)u_\infty\Big\}\cdot w\ dx\\
&=\int_{\mathbb{R}^n}\Big\{(-\Delta+1-P_c(D))u_c+\mathcal{N}(u_c)-\mathcal{N}(u_\infty)\Big\}\cdot w\ dx\\
&=\int_{\mathbb{R}^n}\Big\{(-\Delta+1-P_c(D))u_c+\mathcal{N}(u_\infty+w)-\mathcal{N}(u_\infty)\Big\}\cdot w\ dx.
\end{align*}
Taking out the first degree Taylor approximation from $\mathcal{N}(u_\infty+w)-\mathcal{N}(u_\infty)$, that is, $\mathcal{N}'(u_\infty)w$, we write
\begin{align*}
\|w\|_{H^1(\mathbb{R}^n)}^2&=\int_{\mathbb{R}^n}(-\Delta+1-P_c(D))u_c\cdot w\ dx+\int_{\mathbb{R}^n}\mathcal{N}'(u_\infty)w\cdot w\ dx\\
&\quad+\int_{\mathbb{R}^n}\Big\{\mathcal{N}(u_\infty+w)-\mathcal{N}(u_\infty)-\mathcal{N}'(u_\infty)w\Big\}\cdot w\ dx.
\end{align*}
Then, moving the second integral to the left, 
\begin{equation}\label{Lww}
\begin{aligned}
\int_{\mathbb{R}^n}\mathcal{L}w\cdot w\ dx&=\int_{\mathbb{R}^n}(-\Delta+1-P_c(D))u_c\cdot w\ dx\\
&\quad+\int_{\mathbb{R}^n}\Big\{\mathcal{N}(u_\infty+w)-\mathcal{N}(u_\infty)-\mathcal{N}'(u_\infty)w\Big\}\cdot w\ dx,
\end{aligned}
\end{equation}
where
\begin{align*}
\mathcal{L}v&=(-\Delta+1-\mathcal{N}'(u_\infty))v\\
&=\left\{\begin{aligned}
&(-\Delta+1)v-\Big(\frac{1}{|x|}*u_\infty^2\Big)v-2u_\infty\Big(\frac{1}{|x|}*(u_\infty v)\Big)&&\textup{(Hartree nonlinearity)}\\
&(-\Delta+1-(p-1)u_\infty^{p-2})v &&\textup{(power-type nonlinearity)},
\end{aligned}
\right.
\end{align*}
is the linearized operator around the ground state $u_\infty$. For both nonlinearities, 
\begin{equation}\label{algebraic property of L}
\mathcal{L}u_\infty=(-\Delta+1)u_\infty-(p-1)\mathcal{N}(u_\infty)=-(p-2)(-\Delta+1)u_\infty,
\end{equation}
where $p=4$  for the Hartree nonlinearity.

For the first integral on the right hand side of \eqref{Lww}, by Cauchy-Schwartz, 
$$\int_{\mathbb{R}^n}(-\Delta+1-P_c(D))u_c\cdot w\ dx\leq \big\|(-\Delta+1-P_c(D))u_c\big\|_{H^{-1}(\mathbb{R}^n)}\|w\|_{H^1(\mathbb{R}^n)}.$$
We claim that 
\begin{equation}\label{key inequality for rate}
\big\|(-\Delta+1-P_c(D))u_c\big\|_{H^{-1}(\mathbb{R}^n)}=O\left(\frac{1}{c^2}\right).
\end{equation}
Indeed, by the Plancherel theorem and the high Sobolev norm uniform bound (Proposition \ref{Uniform Sobolev norm bounds for the pseudo-relativistic equations}), 
\begin{align*}
\big\|(-\Delta+1-P_c(D))u_c\big\|_{H^{-1}(\mathbb{R}^n)}&=\frac{1}{(2\pi)^{\frac{d}{2}}}\Big\|\frac{1}{\langle\xi\rangle}\Big((|\xi|^2+1)-P_c(\xi)\Big)\widehat{u}_c(\xi)\Big\|_{L_{\xi}^2(\mathbb{R}^n)}\\
&=\frac{1}{(2\pi)^{\frac{d}{2}}}\Big\|\frac{1}{\langle\xi\rangle}O\left(\frac{|\xi|^4}{c^2}\right)\widehat{u}_c(\xi)\Big\|_{L_{\xi}^2(\mathbb{R}^n)}\\
&\leq O\left(\frac{1}{c^2}\right)\frac{1}{(2\pi)^{\frac{d}{2}}}\big\|\langle\xi\rangle^3\widehat{u}_c(\xi)\big\|_{L_{\xi}^2(\mathbb{R}^n)}\\
&=O\left(\frac{1}{c^2}\right)\|u_c\|_{H^3(\mathbb{R}^n)}=O\left(\frac{1}{c^2}\right).
\end{align*}
Thus, by the claim \eqref{key inequality for rate}, we prove that
\begin{equation}\label{Lww estimate1}
\int_{\mathbb{R}^n}(-\Delta+1-P_c(D))u_c\cdot w\ dx= O\left(\frac{1}{c^2}\right)\|w\|_{H^1(\mathbb{R}^n)}.
\end{equation}

For the second integral on the right hand side of \eqref{Lww}, by the H\"older and Sobolev inequalities,
\begin{align*}
&\int_{\mathbb{R}^n}\Big\{\mathcal{N}(u_\infty+w)-\mathcal{N}(u_\infty)-\mathcal{N}'(u_\infty)w\Big\}\cdot w\ dx\\
&\leq\big\|\mathcal{N}(u_\infty+w)-\mathcal{N}(u_\infty)-\mathcal{N}'(u_\infty)w\big\|_{L^{\frac{p}{p-1}}(\mathbb{R}^n)}\|w\|_{L^p(\mathbb{R}^n)}\\
&\lesssim\big\|\mathcal{N}(u_\infty+w)-\mathcal{N}(u_\infty)-\mathcal{N}'(u_\infty)w\big\|_{L^{\frac{p}{p-1}}(\mathbb{R}^n)}\|w\|_{H^1(\mathbb{R}^n)},
\end{align*}
where $p=4$ for the Hartree nonlinearity. We claim that
\begin{equation}\label{w estimate}
\big\|\mathcal{N}(u_\infty+w)-\mathcal{N}(u_\infty)-\mathcal{N}'(u_\infty)w\big\|_{L^{\frac{p}{p-1}}(\mathbb{R}^n)}=O\left(\|w\|_{H^1(\mathbb{R}^n)}^2\right).
\end{equation}
Indeed, for the polynomial nonlinearity, by the H\"older and Sobolev inequalities,
\begin{align*}
&\Big\|(u_\infty+w)^{p-1}-u_\infty^{p-1}-(p-1)u_\infty^{p-2}w\Big\|_{L^{\frac{p}{p-1}}(\mathbb{R}^n)}\\
&=\Big\|\sum_{k=0}^{p-3} \frac{(p-1)!}{k! (p-1-k)!}u_\infty^k w^{p-k-1}\Big\|_{L^{\frac{p}{p-1}}(\mathbb{R}^n)}\\
&\leq\sum_{k=0}^{p-3} \frac{(p-1)!}{k! (p-1-k)!}\|u_\infty\|_{L^p(\mathbb{R}^n)}^k \|w|_{L^p(\mathbb{R}^n)}^{p-k-1}\\
&\lesssim\sum_{k=0}^{p-3} \frac{(p-1)!}{k! (p-1-k)!}\|u_\infty\|_{L^p(\mathbb{R}^n)}^k \|w\|_{H^1(\mathbb{R}^n)}^{p-k-1}\\
&\lesssim\|w\|_{H^1(\mathbb{R}^n)}^2.
\end{align*}
On the other hand, for the Hartree nonlinearity, we have
\begin{equation}\label{eq-4-1}
\begin{split}
&\mathcal{N}(u_\infty+w)-\mathcal{N}(u_\infty)-\mathcal{N}'(u_\infty)w
\\
&=(|x|^{-1} * |u_{\infty} + w|^2 ) (u_{\infty} + w)
\\
&\qquad- (|x|^{-1} * |u_{\infty}|^2)u_{\infty} + 2( |x|^{-1}* (u_{\infty}w)) u_{\infty} + (|x|^{-1} *u_{\infty}^2) w
\\
&=(|x|^{-1} * w^2)u_{\infty} + 2 (|x|^{-1} * (u_{\infty}w)) w + (|x|^{-1} * w^2) w. 
\end{split}
\end{equation}
To estimate this, we use the Hardy-Littlewood-Sobolev inequality with the Sobolev embedding to find the following estimate
\begin{align*}
\| |x|^{-1} (fg) h \|_{L^{4/3} (\mathbb{R}^3)} & \leq \| |x|^{-1} (fg) \|_{L^4 (\mathbb{R}^3)} \| h\|_{L^2 (\mathbb{R}^3)}
\\
&\leq \|fg\|_{L^{12/11}(\mathbb{R}^3)}  \| h\|_{L^2 (\mathbb{R}^3)}
\\
&\leq \|f\|_{L^{24/11}(\mathbb{R}^3)}\|f\|_{L^{24/11}(\mathbb{R}^3)}  \| h\|_{L^2 (\mathbb{R}^3)}
\\
&\leq C\|f\|_{H^1 (\mathbb{R}^3)}\|g\|_{H^1 (\mathbb{R}^3)}\|h\|_{H^1 (\mathbb{R}^3)}.
\end{align*}
By applying this inequality to \eqref{eq-4-1}, we prove the claim for the Hartree nonlinearity, 
\begin{equation*}
\|\mathcal{N}(u_\infty+w)-\mathcal{N}(u_\infty)-\mathcal{N}'(u_\infty)w\|_{L^{4/3}(\mathbb{R}^3)} \leq C \|u_{\infty}\|_{H^1 (\mathbb{R}^3)} \|w\|_{H^1 (\mathbb{R}^3)}^2 +C\|w\|_{H^1 (\mathbb{R}^3)}^3.
\end{equation*} 
Therefore, by the claim \eqref{w estimate}, we obtain
\begin{equation}\label{Lww estimate2}
\int_{\mathbb{R}^n}\Big\{\mathcal{N}(u_\infty+w)-\mathcal{N}(u_\infty)-\mathcal{N}'(u_\infty)w\Big\}\cdot w\ dx=O\left(\|w\|_{H^1(\mathbb{R}^n)}^3\right).
\end{equation}
Going back to \eqref{Lww}, by \eqref{Lww estimate1} and \eqref{Lww estimate2}, we prove that
\begin{equation}\label{Lww'}
\int_{\mathbb{R}^n}\mathcal{L}w\cdot w\ dx=O\left(\frac{1}{c^2}\right)\|w\|_{H^1(\mathbb{R}^n)}+O\left(\|w\|_{H^1(\mathbb{R}^n)}^3\right).
\end{equation}

Now, in order to make use of the non-degeneracy of the linearized operator $\mathcal{L}$ in \eqref{Lww'}, we decompose
$$w=\lambda u_\infty+v\quad\textup{with }\langle v,u_\infty\rangle_{H^1(\mathbb{R}^n)}=0$$
so that $\|w\|_{H^1(\mathbb{R}^n)}^2=\|v\|_{H^1(\mathbb{R}^n)}^2+\lambda^2\|u_\infty\|_{H^1(\mathbb{R}^n)}^2$. Here, $v$ is a radially symmetric function, since $u_\infty$ and $u_c$ are radially symmetric. Thus, by Lemma \ref{lem: non-degeneracy},
\begin{align*}
d\|v\|_{H^1(\mathbb{R}^n)}^2&\leq \int_{\mathbb{R}^n}\mathcal{L}v\cdot v\ dx\\
&=\int_{\mathbb{R}^n}\mathcal{L}w\cdot w\ dx-2\lambda\int_{\mathbb{R}^n}\mathcal{L}u_\infty\cdot v\ dx-\lambda^2\int_{\mathbb{R}^n}\mathcal{L}u_\infty\cdot u_\infty\ dx.
\end{align*}
By \eqref{algebraic property of L} and $\langle u_\infty, v\rangle_{H^1}=0$, we obtain that
\begin{align*}
d\|v\|_{H^1(\mathbb{R}^n)}^2&\leq\int_{\mathbb{R}^n}\mathcal{L}w\cdot w\ dx+2\lambda(p-2)\int_{\mathbb{R}^n}(-\Delta+1)u_\infty\cdot v\ dx\\
&\quad+\lambda^2(p-2)\int_{\mathbb{R}^n}(-\Delta+1)u_\infty\cdot u_\infty\ dx\\
&=\int_{\mathbb{R}^n}\mathcal{L}w\cdot w\ dx+\lambda^2(p-2)\|u_\infty\|_{H^1(\mathbb{R}^n)}^2.
\end{align*}
Therefore, it follows from \eqref{Lww'} and Cauchy-Schwartz that
\begin{align*}
d\|w\|_{H^1(\mathbb{R}^n)}^2&=d\left(\|v\|_{H^1(\mathbb{R}^n)}^2+\lambda^2\|u_\infty\|_{H^1(\mathbb{R}^n)}^2\right)\\
&\leq\int_{\mathbb{R}^n}\mathcal{L}w\cdot w\ dx+\lambda^2(p-1)\|u_\infty\|_{H^1(\mathbb{R}^n)}^2\\
&=O\left(\frac{1}{c^2}\right)\|w\|_{H^1(\mathbb{R}^n)}+O\left(\|w\|_{H^1(\mathbb{R}^n)}^3\right)+O(\lambda^2)\\
&=O\left(\frac{1}{c^4}\right)+\frac{d}{4}\|w\|_{H^1(\mathbb{R}^n)}^2+\frac{}{}O\left(\|w\|_{H^1(\mathbb{R}^n)}^3\right)+O(\lambda^2).
\end{align*}
Since $w\to 0$ in $H^1(\mathbb{R}^n)$ as $c\to\infty$, it implies that
\begin{equation}\label{first error estimate}
\|w\|_{H^1(\mathbb{R}^n)}^2=O\left(\frac{1}{c^4}\right)+O(\lambda^2).
\end{equation}

It remains to estimate $\lambda$. Since $u_c = (1+\lambda)u_\infty+v$, we have
\[
\begin{aligned}
-\Delta u_c +u_c-\mathcal{N}(u_c) &= (1+\lambda)(-\Delta u_\infty+u_\infty-\mathcal{N}(u_\infty))\\
&\quad-\Delta v +v +(1+\lambda)\mathcal{N}(u_\infty)-\mathcal{N}(u_c)
\\
&=-\Delta v +v +(1+\lambda)\mathcal{N}(u_\infty)-\mathcal{N}(u_c).
\end{aligned}
\]
In order to get rid of $-\Delta v+v$, we take the $L^2(\mathbb{R}^n)$-inner product of the equation against $u_\infty$, then
\[
\begin{aligned}
&\int_{\R^n}(-\Delta u_c +u_c-\mathcal{N}(u_c))\cdot u_\infty\,dx 
\\
&= (1+\lambda)\int_{\R^N}\mathcal{N}(u_\infty)\cdot u_\infty\,dx -\int_{\R^n}\mathcal{N}(u_c)\cdot u_\infty\,dx\\
&= \lambda\int_{\R^N}\mathcal{N}(u_\infty)\cdot u_\infty\,dx +\int_{\R^n}(\mathcal{N}(u_\infty)-\mathcal{N}(u_c))\cdot u_\infty\,dx\\
&= \lambda\|u_\infty\|_{H^1(\mathbb{R}^n)}^2 +\int_{\R^n}(\mathcal{N}(u_\infty)-\mathcal{N}(u_c))\cdot u_\infty\,dx,
\end{aligned}
\]
where in the last identity, we used the equation for $u_\infty$. Hence,
\begin{equation}\label{lambda estimate}
\begin{aligned}
 \lambda\|u_\infty\|_{H^1(\mathbb{R}^n)}^2&=\int_{\mathbb{R}^n} (-\Delta u_c+u_c-\mathcal{N}(u_\infty))\cdot u_\infty \ dx\\
&=\int_{\mathbb{R}^n}\Big\{(-\Delta+1-P_c(D))u_c+(P_c(D) u_c-\mathcal{N}(u_\infty))\Big\}\cdot u_\infty \ dx\\
&=\int_{\mathbb{R}^n}\Big\{(-\Delta+1-P_c(D))u_c+(\mathcal{N}(u_c)-\mathcal{N}(u_\infty))\Big\}\cdot u_\infty \ dx\\
&=\int_{\mathbb{R}^n}(-\Delta+1-P_c(D))u_c\cdot u_\infty \ dx+\int_{\mathbb{R}^n} \mathcal{N}'(u_\infty)w\cdot u_\infty \ dx\\
&\quad+\int_{\mathbb{R}^n} \big(\mathcal{N}(u_c)-\mathcal{N}(u_\infty)-\mathcal{N}'(u_\infty)w\big)\cdot u_\infty \ dx,
\end{aligned}
\end{equation}
where in the third identity, we used the equation for $u_c$. For the first integral in \eqref{lambda estimate}, we apply \eqref{key inequality for rate} to get
\begin{align*}
&\int_{\mathbb{R}^n}(-\Delta+1-P_c(D))u_c\cdot u_\infty \ dx\\
&\leq\big\|(-\Delta+1-P_c(D))u_c\big\|_{H^{-1}(\mathbb{R}^n)}\|u_\infty\|_{H^1(\mathbb{R}^n)}\\
&=O\left(\frac{1}{c^2}\right).
\end{align*}
For the second integral, we use \eqref{algebraic property of L} and orthogonality of $u_\infty$ and $v$,  
\begin{align*}
\int_{\mathbb{R}^n} \mathcal{N}'(u_\infty)w\cdot u_\infty \ dx&=\int_{\mathbb{R}^n} \mathcal{N}'(u_\infty)u_\infty\cdot w \ dx=(p-1)\int_{\mathbb{R}^n} \mathcal{N}(u_\infty)\cdot w \ dx\\
&=(p-1)\int_{\mathbb{R}^n} (-\Delta+1)u_\infty\cdot (\lambda u_\infty+v) \ dx\\
&=\lambda(p-1)\|u_\infty\|_{H^1(\mathbb{R}^n)}^2.
\end{align*}
By H\"older inequality and \eqref{w estimate}, the last integral in \eqref{lambda estimate} is bounded by
$$\big\|\mathcal{N}(u_c)-\mathcal{N}(u_\infty)-\mathcal{N}'(u_\infty)w\big\|_{L^{\frac{p}{p-1}}(\mathbb{R}^n)}\|u_\infty\|_{L^p(\mathbb{R}^n)}=O\left(\|w\|_{H^1(\mathbb{R}^n)}^2\right).$$
Collecting all, we prove that
\begin{equation*}
\begin{split}
&\lambda(p-2)\|u_\infty\|_{H^1(\mathbb{R}^n)}^2=O\left(\|w\|_{H^1(\mathbb{R}^n)}^2\right)+ O \left( \frac{1}{c^2}\right) 
\\
&\quad\Rightarrow \lambda=O\left(\|w\|_{H^1(\mathbb{R}^n)}^2\right) + O \left( \frac{1}{c^2}\right).
\end{split}
\end{equation*}
Finally, going back to \eqref{first error estimate}, we conclude that 
$$\|w\|_{H^1(\mathbb{R}^n)}^2=O\left(\frac{1}{c^4}\right)+O\left(\|w\|_{H^1(\mathbb{R}^n)}^4\right)\Rightarrow \|w\|_{H^1(\mathbb{R}^n)}=O\left(\frac{1}{c^2}\right) .$$
\end{proof}

\begin{prop}[Optimality of Proposition \ref{prop: rate of convergence}] There exists a constant $B>0$ such that
\begin{equation}
\|u_c- u_{\infty}\|_{H^1} \geq \frac{B}{c^2}
\end{equation}
for any $c>1$ large enough.
\end{prop}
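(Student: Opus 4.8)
The plan is to extract from $w:=u_c-u_\infty$ its leading-order part, which is produced entirely by the mismatch between the symbols $P_c(\xi)$ and $|\xi|^2+1$, and to show that this part is of size $\sim 1/c^2$ already in the weak norm $H^{-1}(\R^n)$. Since the linearized operator $\mathcal{L}=-\Delta+1-\mathcal{N}'(u_\infty)$ is bounded from $H^1(\R^n)$ into $H^{-1}(\R^n)$, a lower bound on $\|\mathcal{L}w\|_{H^{-1}}$ then propagates to a lower bound on $\|w\|_{H^1}$.

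First I would recall the identity already contained in the proof of Proposition~\ref{prop: rate of convergence}: subtracting the equations for $u_c$ and $u_\infty$ and peeling off the linear part $\mathcal{N}'(u_\infty)w$ of the nonlinear difference yields
\[
\mathcal{L}w=f_c+Q(w),\qquad f_c:=\big((-\Delta+1)-P_c(D)\big)u_c,\qquad Q(w):=\mathcal{N}(u_\infty+w)-\mathcal{N}(u_\infty)-\mathcal{N}'(u_\infty)w.
\]
Next I would analyze $f_c$ on the Fourier side. Rationalizing the square root gives
\[
c^2\big((|\xi|^2+1)-P_c(\xi)\big)=\frac{c^2|\xi|^4}{|\xi|^2+\tfrac{c^2}{2}+\sqrt{c^2|\xi|^2+\tfrac{c^4}{4}}},
\]
so this multiplier is nonnegative, bounded above by $|\xi|^4$ for every $c\geq1$, and converges pointwise to $|\xi|^4$ as $c\to\infty$. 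Combining this with the uniform bound $\sup_{c\geq1}\|u_c\|_{H^4(\R^n)}<\infty$ from Proposition~\ref{Uniform Sobolev norm bounds for the pseudo-relativistic equations}, the $H^1$-convergence $u_c\to u_\infty$, and a dominated-convergence argument carried out after splitting the frequency integral at $|\xi|=R$ (low frequencies are controlled by the $H^1$-convergence, high frequencies by the uniform $H^4$-bound and by $u_\infty\in H^4$, which holds by Theorem~\ref{elliptic regularity}), I would conclude
\[
c^2 f_c\longrightarrow \Delta^2 u_\infty\qquad\text{in }H^{-1}(\R^n)\text{ as }c\to\infty,
\]
where $\Delta^2$ denotes the Fourier multiplier with symbol $|\xi|^4$.

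It then remains to discard the quadratic remainder. By the nonlinear estimate \eqref{w estimate} together with the already-established rate $\|w\|_{H^1}=O(1/c^2)$ of Proposition~\ref{prop: rate of convergence}, one gets $\|Q(w)\|_{L^{p/(p-1)}(\R^n)}=O(\|w\|_{H^1}^2)=O(1/c^4)$, hence, by the embedding $L^{p/(p-1)}(\R^n)\hookrightarrow H^{-1}(\R^n)$, $\|c^2 Q(w)\|_{H^{-1}}=O(1/c^2)\to0$. Therefore $\mathcal{L}(c^2 w)=c^2 f_c+c^2Q(w)\to\Delta^2 u_\infty$ in $H^{-1}(\R^n)$, and in particular $\|\mathcal{L}(c^2 w)\|_{H^{-1}}\to\|\Delta^2 u_\infty\|_{H^{-1}}>0$, the strict positivity being clear since $u_\infty\not\equiv0$. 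Finally $\mathcal{L}$ maps $H^1(\R^n)$ boundedly into $H^{-1}(\R^n)$ (because $\mathcal{N}'(u_\infty)$ maps $H^1(\R^n)$ into $L^2(\R^n)\hookrightarrow H^{-1}(\R^n)$, $u_\infty$ being a bounded, rapidly decaying function), so
\[
c^2\|w\|_{H^1(\R^n)}\;\geq\;\frac{\|\mathcal{L}(c^2 w)\|_{H^{-1}(\R^n)}}{\|\mathcal{L}\|_{H^1\to H^{-1}}}\;\longrightarrow\;\frac{\|\Delta^2 u_\infty\|_{H^{-1}(\R^n)}}{\|\mathcal{L}\|_{H^1\to H^{-1}}}\;>\;0,
\]
which gives $\|u_c-u_\infty\|_{H^1(\R^n)}\geq B/c^2$ for all sufficiently large $c$, with $B:=\tfrac12\|\Delta^2 u_\infty\|_{H^{-1}}/\|\mathcal{L}\|_{H^1\to H^{-1}}$.

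The step I expect to be the main obstacle is the $H^{-1}$-convergence $c^2 f_c\to\Delta^2 u_\infty$: the multiplier $c^2\big((|\xi|^2+1)-P_c(\xi)\big)$ is unbounded in $\xi$, so passing to the limit demands uniform-in-$c$ control of the high frequencies of $u_c$, which is exactly what the uniform higher-Sobolev bound of Section~\ref{sec-3} supplies; the remaining ingredients (the symbol computation, the embedding used for $Q(w)$, and the boundedness of $\mathcal{L}$) are routine.
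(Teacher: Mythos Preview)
Your argument is correct but follows a genuinely different route from the paper's. The paper tests the equation for $w=u_c-u_\infty$ against $u_\infty$ (not against $w$): after rearranging, one obtains the identity
\[
\int_{\R^n}\Big(|\xi|^2-\big(\sqrt{c^2|\xi|^2+\tfrac{c^4}{4}}-\tfrac{c^2}{2}\big)\Big)|\hat u_\infty(\xi)|^2\,d\xi
=\int_{\R^n}\Big(P_c(D)w+w+\mathcal{N}(u_\infty)-\mathcal{N}(u_\infty+w)\Big)u_\infty\,dx,
\]
notes that the left-hand symbol is nonnegative everywhere and $\geq |\xi|^4/c^2$ on $\{|\xi|\leq1\}$, and bounds the right-hand side by $C\|w\|_{H^1}$ using only $0\leq P_c(\xi)-1\leq|\xi|^2$ and elementary H\"older estimates. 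This yields $\|w\|_{H^1}\gtrsim 1/c^2$ directly, without invoking the already-proved upper bound $\|w\|_{H^1}=O(1/c^2)$ and without the uniform $H^4$ bound on $u_c$.

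By contrast, you bootstrap from both of those ingredients: the upper bound is needed to discard $c^2Q(w)$, and the uniform $H^4$ control on $u_c$ is what lets you pass to the limit in $c^2f_c$. The payoff is that your argument actually identifies the limit $\mathcal{L}(c^2w)\to\Delta^2u_\infty$ in $H^{-1}$, which is strictly more information than the mere size estimate and points toward a next-order expansion $u_c=u_\infty+c^{-2}\mathcal{L}^{-1}\Delta^2u_\infty+o(c^{-2})$. The paper's proof is shorter and self-contained; yours is more structural.
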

\begin{proof}
Let $w=u_c- u_{\infty}$ and we write \eqref{eq-s-4} as
\[
\left[\sqrt{-c^2\Delta +\frac{c^4}{4}}-\frac{c^2}{2}\right](u_{\infty}+w) + (u_{\infty}+w) = \mathcal{N}(u_{\infty}+w).
\]
Rearranging this with \eqref{eq-s-3}, we find that 
\begin{equation*}
\begin{split}
&\left[\sqrt{-c^2\Delta +\frac{c^4}{4}}-\frac{c^2}{2}\right](w) + w + \mathcal{N}(u_{\infty}) -\mathcal{N}(u_{\infty}+w)
\\
&\qquad = -\Delta u_{\infty}  -\left[\sqrt{-c^2\Delta +\frac{c^4}{4}}-\frac{c^2}{2}\right](u_{\infty}).
\end{split}
\end{equation*}
Multiplying both sides by $u_{\infty}$ and integrating over $\mathbb{R}^n$, we get
\begin{equation*}
\begin{split}
\mathcal{A}:=&\int_{\mathbb{R}^n} |\nabla u_{\infty}|^2 - u_{\infty}(x) \left[\sqrt{-c^2\Delta +\frac{c^4}{4}}-\frac{c^2}{2}\right](u_{\infty})dx
\\
& =\int_{\mathbb{R}^n} \Biggl(\left[\sqrt{-c^2\Delta +\frac{c^4}{4}}-\frac{c^2}{2}\right](w) + w + \mathcal{N}(u_{\infty}) -\mathcal{N}(u_{\infty}+w)\Biggr) u_{\infty} (x) dx :=\mathcal{B}.
\end{split}
\end{equation*}
We now proceed to obtain upper and lower bounds in the above for the proof. First, we shall find a lower bound of $A$. By the Plancherel theorem,
\begin{equation*}
\mathcal{A} = \int_{\mathbb{R}^n} \widehat{u_{\infty}}(\xi)^2 \left[ \xi^2 - \Bigl( \sqrt{ c^2 \xi^2 + \frac{c^4}{4}} - \frac{c^2}{2}\Bigr) \right] \ d\xi.
\end{equation*}
Notice that for $\xi \in \mathbb{R}^n$,
\begin{equation}\label{eq-4-2}
\xi^2 -\left(\sqrt{ c^2 \xi^2 + \frac{c^4}{4}} - \frac{c^2}{2} \right)= \xi^2- \frac{c^2 \xi^2}{ \sqrt{ c^2 \xi^2 + \frac{c^4}{4}} + \frac{c^2}{2}} \geq 0.
\end{equation}
In addition, a Taylor expansion shows that for $|\xi| \leq 1$, 
\begin{equation*}
\xi^2 - \sqrt{ c^2 \xi^2 + \frac{c^4}{4}} - \frac{c^2}{2} = \frac{2\xi^4}{c^2} + O \left( \frac{1}{c^4}\right) \geq \frac{\xi^4}{c^2}
\end{equation*}
provided $c>1$ is large enough. Therefore, we have
\begin{equation}\label{eq-4-3}
\mathcal{A} \gtrsim \frac{1}{c^2} \int_{|\xi|\leq 1} |\xi|^4\, \widehat{u_{\infty}}(\xi)^2 \, d\xi.
\end{equation}
On the other hand, using the estimates in the proof of Proposition \ref{prop: rate of convergence} and \eqref{eq-4-2}, we deduce
\begin{equation*}
\begin{split}
\mathcal{B} &= \int_{\mathbb{R}^n} \Biggl(  \mathcal{N}(u_{\infty}) -\mathcal{N}(u_{\infty}+w)\Biggr) u_{\infty} (x) dx + \int_{\mathbb{R}^n} w(x)\left[ 1+ \sqrt{-c^2\Delta +\frac{c^4}{4}}-\frac{c^2}{2}\right]u_{\infty}(x) dx
\\
& \leq C\|w\|_{H^1 (\mathbb{R}^n)}\left(\|w\|_{H^1 (\mathbb{R}^n)}+ \|u_{\infty}\|_{H^1 (\mathbb{R}^n)}\right)^{p-1}+ C\|w\|_{H^1 (\mathbb{R}^n)} \|u_{\infty}\|_{H^1 (\mathbb{R}^n)}.
\end{split}
\end{equation*}
Combining this estimate with \eqref{eq-4-3}, we finally get 
\begin{equation*}
 \|w\|_{H^1 (\mathbb{R}^n)} \geq \frac{B}{c^2}
 \end{equation*}
with a constant $D>0$ independent of large $c> 1$. The proof is finished.
\end{proof}

\section{High Sobolev norm convergence from a low Sobolev norm convergence}\label{sec-4}

We prove that the convergence of the pseudo-relativistic ground state in a low Sobolev norm implies that in a high Sobolev norm.

\begin{prop}[High Sobolev norm convergence from a low Sobolev norm convergence]\label{low to high convergence}
\begin{equation}\label{eq: low to high convergence}
\|u_c-u_\infty\|_{H^s(\mathbb{R}^n)}=O(\|u_c-u_\infty\|_{H^{1/2}(\mathbb{R}^n)})+O \left( \frac{1}{c^2}\right),\quad\forall s\geq\tfrac{1}{2}.
\end{equation}
\end{prop}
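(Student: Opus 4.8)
The proof runs parallel to that of Theorem \ref{elliptic regularity}: we bootstrap along the increasing sequence $\{s_k\}_{k\ge 0}$ supplied by Proposition \ref{iterated Hartree nonlinear estimates} (for NLH) or Proposition \ref{iterated integer-power nonlinear estimates} (for NLS), but we now propagate the difference $w:=u_c-u_\infty$ instead of $u_c$ itself. Subtracting $(-\Delta+1)u_\infty=\mathcal N(u_\infty)$ from $P_c(D)u_c=\mathcal N(u_c)$, we see that $w$ solves
\[
P_c(D)w=\big(\mathcal N(u_c)-\mathcal N(u_\infty)\big)+\big((-\Delta+1)-P_c(D)\big)u_\infty .
\]
Since $P_c(\xi)\ge K\langle\xi\rangle$ by Lemma \ref{symbol bound}, the Fourier multiplier $P_c(D)^{-1}$ maps $H^{\sigma}(\mathbb R^n)$ into $H^{\sigma+1}(\mathbb R^n)$ with operator norm at most $K^{-1}$ for every $\sigma\in\mathbb R$ (because $\langle\xi\rangle/P_c(\xi)\le K^{-1}$), so
\[
\|w\|_{H^{s_{k+1}}}\lesssim_K\big\|\mathcal N(u_c)-\mathcal N(u_\infty)\big\|_{H^{s_{k+1}-1}}+\big\|\big((-\Delta+1)-P_c(D)\big)u_\infty\big\|_{H^{s_{k+1}-1}}.
\]

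For the first term I would use that, in either case, $\mathcal N(u_c)-\mathcal N(u_\infty)$ is a finite sum of exactly the multilinear forms estimated in Section \ref{sec-2}, each carrying precisely one factor equal to $w$ and all remaining factors equal to $u_c$ or $u_\infty$. Concretely, $u_c^{p-1}-u_\infty^{p-1}=\sum_{j=0}^{p-2}u_c^{\,j}\,u_\infty^{\,p-2-j}\,w$, while for the Hartree nonlinearity
\[
\mathcal N(u_c)-\mathcal N(u_\infty)=\Big(\tfrac1{|x|}*\big((u_c+u_\infty)w\big)\Big)u_c+\Big(\tfrac1{|x|}*u_\infty^2\Big)w .
\]
Applying Proposition \ref{iterated integer-power nonlinear estimates} (resp.\ Proposition \ref{iterated Hartree nonlinear estimates}) to each summand, and using the uniform bound $\sup_{c\ge1}\|u_c\|_{H^{s_k}(\mathbb R^n)}<\infty$ from Proposition \ref{Uniform Sobolev norm bounds for the pseudo-relativistic equations} together with $u_\infty\in H^{s_k}(\mathbb R^n)$ — the latter itself following from Theorem \ref{elliptic regularity} applied to $-\Delta+1$, whose symbol satisfies \eqref{general symbol bound} with $K=1$ — every summand is bounded by a constant times $\|w\|_{H^{s_k}(\mathbb R^n)}$. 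Hence $\big\|\mathcal N(u_c)-\mathcal N(u_\infty)\big\|_{H^{s_{k+1}-1}}\lesssim\|w\|_{H^{s_k}}$.

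For the second term I would record the elementary pointwise identity
\[
(|\xi|^2+1)-P_c(\xi)=|\xi|^2\cdot\frac{\sqrt{c^2|\xi|^2+\tfrac{c^4}{4}}-\tfrac{c^2}{2}}{\sqrt{c^2|\xi|^2+\tfrac{c^4}{4}}+\tfrac{c^2}{2}},
\]
whose numerator is $\le|\xi|^2$ and whose denominator is $\ge c^2$, so that $0\le(|\xi|^2+1)-P_c(\xi)\le|\xi|^4/c^2$ for all $\xi\in\mathbb R^n$. Since $u_\infty$ lies in $H^{s}(\mathbb R^n)$ for every $s\ge 0$, the Plancherel theorem gives $\big\|\big((-\Delta+1)-P_c(D)\big)u_\infty\big\|_{H^{s_{k+1}-1}}\lesssim c^{-2}\|u_\infty\|_{H^{s_{k+1}+3}}=O(c^{-2})$. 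Combining the two bounds yields the recursion $\|w\|_{H^{s_{k+1}}}\lesssim\|w\|_{H^{s_k}}+c^{-2}$; since $s_0=\tfrac12$ and $s_k\to\infty$, iterating it a finite number of times gives \eqref{eq: low to high convergence} for $s=s_k$, and the general case $s\ge\tfrac12$ then follows by monotonicity of the $H^s$-norms (pick $k$ with $s_k\ge s$).

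The only genuine difficulty is bookkeeping: one must verify that the difference of nonlinearities really decomposes term-by-term into forms covered by the estimates of Section \ref{sec-2} (in particular that the Hartree convolution structure is retained in each summand), and that all the ``coefficient'' factors $u_c,u_\infty$ are indeed uniformly bounded in the relevant $H^{s_k}$. No analytic input is required beyond Lemma \ref{symbol bound}, Propositions \ref{iterated Hartree nonlinear estimates}--\ref{iterated integer-power nonlinear estimates}, Proposition \ref{Uniform Sobolev norm bounds for the pseudo-relativistic equations}, and the regularity $u_\infty\in H^s(\mathbb R^n)$ for all $s$.
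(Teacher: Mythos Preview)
Your proof is correct and follows essentially the same bootstrap argument as the paper: the same decomposition of $w$ via the two equations, the same use of Lemma \ref{symbol bound} and Propositions \ref{iterated Hartree nonlinear estimates}/\ref{iterated integer-power nonlinear estimates} on the nonlinear difference, and the same iteration along $\{s_k\}$. Your treatment of the second term is in fact slightly cleaner than the paper's --- the exact algebraic identity you write down gives the global bound $0\le(|\xi|^2+1)-P_c(\xi)\le|\xi|^4/c^2$ directly, whereas the paper splits into the regions $|\xi|\le c/10$ and $|\xi|>c/10$ and uses Taylor expansion plus Lemma \ref{symbol bound} separately.
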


\begin{proof}
\textbf{Step 1. (Setup of the proof)}
Using the equations for $u_c$ and $u_\infty$, we decompose the difference $u_c-u_\infty$ into the two parts:
\begin{align*}
u_c-u_\infty&= P_c(D)^{-1}\mathcal{N}(u_c)-(1-\Delta)^{-1}\mathcal{N}(u_\infty)\\
&=P_c(D)^{-1}\Big(\mathcal{N}(u_c)-\mathcal{N}(u_\infty)\Big)+\Big(P_c(D)^{-1}-(1-\Delta)^{-1}\Big)\mathcal{N}(u_\infty)\\
&=P_c(D)^{-1}\Big(\mathcal{N}(u_c)-\mathcal{N}(u_\infty)\Big)+\Big(P_c(D)^{-1}-(1-\Delta)^{-1}\Big)(1-\Delta)u_\infty\\
&=P_c(D)^{-1}\Big(\mathcal{N}(u_c)-\mathcal{N}(u_\infty)\Big)+\Big(\frac{1-\Delta}{P_c(D)}-1\Big)u_\infty.
\end{align*}
Note that the first term $P_c(D)^{-1}(\mathcal{N}(u_c)-\mathcal{N}(u_\infty))$, which is the main term, has lower order (in differentiability) than $u_c-u_\infty$. We will use this fact to upgrade the regularity. The remainder $(\frac{1-\Delta}{P_c(D)}-1)u_\infty$ is easier to deal with, since the ground state $u_\infty$ is a well-known nice function.

Let $\{s_k\}_{k=0}^\infty$ be the sequence, given in Proposition \ref{iterated Hartree nonlinear estimates} or in Proposition \ref{iterated integer-power nonlinear estimates}, depending on the nonlinearity such that $s_0=\frac{1}{2}$ and $s_k\to \infty$ as $k\to\infty$.  Then, by the triangle inequality and Lemma \ref{symbol bound},
\begin{align*}
\|u_c-u_\infty\|_{H^{s_{k+1}}}&\leq \Big\|P_c(D)^{-1}\Big(\mathcal{N}(u_c)-\mathcal{N}(u_\infty)\Big)\Big\|_{H^{s_{k+1}}}+\Big\|\Big(\frac{1-\Delta}{P_c(D)}-1\Big)u_\infty\Big\|_{H^{s_{k+1}}}\\
&\lesssim \big\|\mathcal{N}(u_c)-\mathcal{N}(u_\infty)\big\|_{H^{s_{k+1}-1}}+\Big\|\Big(\frac{1-\Delta}{P_c(D)}-1\Big)u_\infty\Big\|_{H^{s_{k+1}}}\\
&=:I+II.
\end{align*}
\textbf{Step 2. (Estimate on $I$)} For the Hartree nonlinearity, we write
$$\mathcal{N}(u_c)-\mathcal{N}(u_\infty)=\Big(\frac{1}{|x|}*(u_c+u_\infty)(u_c-u_\infty)\Big)u_c+\Big(\frac{1}{|x|}*u_\infty^2\Big)(u_c-u_\infty).$$
Then, by Proposition \ref{iterated Hartree nonlinear estimates} and \ref{Uniform Sobolev norm bounds for the pseudo-relativistic equations}, 
$$I\lesssim \Big(\|u_c\|_{H^{s_k}(\mathbb{R}^n)}+\|u_\infty\|_{H^{s_k}(\mathbb{R}^n)}\Big)^2\|u_c-u_\infty\|_{H^{s_k}(\mathbb{R}^n)}\lesssim \|u_c-u_\infty\|_{H^{s_k}(\mathbb{R}^n)}.$$
For the integer-power nonlinearity, we write
\begin{align*}
\mathcal{N}(u_c)-\mathcal{N}(u_\infty)&=u_c^{p-1}-u_\infty^{p-1}
\\
&=\sum_{\ell=0}^{p-2}\frac{(p-1)!}{(\ell+1)!(p-2-\ell)!} u_c^{p-2-\ell} (u_c-u_\infty)^{\ell+1}.
\end{align*}
Then, by Proposition \ref{iterated integer-power nonlinear estimates} and \ref{Uniform Sobolev norm bounds for the pseudo-relativistic equations}, 
\begin{align*}
I\lesssim \Big(\|u_c\|_{H^{s_k}(\mathbb{R}^n)}+\|u_\infty\|_{H^{s_k}(\mathbb{R}^n)}\Big)^{p-2}\|u_c-u_\infty\|_{H^{s_k}(\mathbb{R}^n)}\lesssim \|u_c-u_\infty\|_{H^{s_k}(\mathbb{R}^n)}.
\end{align*}
\textbf{Step 3. (Estimate on $II$)} We observe that if $|\xi|\leq\frac{c}{10}$, by the Taylor's theorem, 
$$\Biggl|\frac{1+|\xi|^2}{P_c(\xi)}-1\Biggr|=\Biggl|\frac{1+|\xi|^2-P_c(\xi)}{P_c(\xi)}\Biggr|\lesssim \frac{\frac{|\xi|^4}{c^2}}{\langle\xi\rangle^2}\leq\frac{|\xi|^2}{c^2},$$
but otherwise, by Lemma \ref{symbol bound},
$$\Biggl|\frac{1+|\xi|^2}{P_c(\xi)}-1\Biggr|\lesssim \langle\xi\rangle\lesssim \frac{|\xi|^3}{c^2}.$$
Thus,
$$II\lesssim\frac{1}{c^2}\|u_\infty\|_{H^{s_k+3}(\mathbb{R}^n)}.$$
\textbf{Step 4. (Conclusion)} In summary, we proved that
$$\|u_c-u_\infty\|_{H^{s_{k+1}}(\mathbb{R}^n)}=O\left(\|u_c-u_\infty\|_{H^{s_k}(\mathbb{R}^n)}\right)+O \left( \frac{1}{c^2}\right).$$
Hence, iterating from $s_0=\frac{1}{2}$, we obtain \eqref{eq: low to high convergence} for all $s=s_k$. Therefore, by interpolation, we conclude that \eqref{eq: low to high convergence} holds for all $s\geq \frac{1}{2}$.
\end{proof}

\appendix

\section{Lorentz spaces}

For $1\leq p,q\leq \infty$, we define the \textit{Lorentz norm}
\begin{equation}
\|f\|_{L^{p,q}(\mathbb{R}^n)}:=\left\{\begin{aligned}
&\Big\{p\int_0^\infty \lambda^{q-1}\big|\{x: |f(x)|\geq\lambda\}\big|^{\frac{q}{p}}d\lambda\Big\}^{\frac{1}{q}}&&\textup{when }1\leq p,q<\infty,\\
&\sup_{\lambda>0}\lambda\big|\{x: |f(x)|\geq\lambda\}\big|^{\frac{1}{p}}&&\textup{when }1\leq p<\infty\textup{ and }q=\infty,\\
&\|f\|_{L^\infty(\mathbb{R}^n)}&&\textup{when }p=q=\infty.
\end{aligned}
\right.
\end{equation}
We define the \textit{Lorentz space} $L^{p,q}(\mathbb{R}^n)$ as the space of all functions having finite $L^{p,q}(\mathbb{R}^n)$-norm. By definition, $L^{p,p}(\mathbb{R}^n)=L^p(\mathbb{R}^n)$ and $L^{p,\infty}(\mathbb{R}^n)$ is the weak $L^p(\mathbb{R}^n)$-space.

We summarize the basic properties of the Lorentz spaces as follows.
\begin{prop} $(i)$ (Inclusion) The embedding $L^{p,q_1}\hookrightarrow L^{p,q_2}$ is continuous if $q_1<q_2$.\\
$(ii)$ (H\"older inequality) If $1\leq p,p_1,p_2<\infty$, $1\leq q,q_1,q_2\leq\infty$, $\frac{1}{p}=\frac{1}{p_1}+\frac{1}{p_2}$ and $\frac{1}{q}=\frac{1}{q_1}+\frac{1}{q_2}$, then
\begin{equation}
\|fg\|_{L^{p,q}(\mathbb{R}^n)}\lesssim \|f\|_{L^{p_1,q_1}(\mathbb{R}^n)}\|g\|_{L^{p_2,q_2}(\mathbb{R}^n)}.
\end{equation}
$(iii)$ (Dual characterization) If $1\leq p, q\leq\infty$, then
\begin{equation}
\|f\|_{L^{p,q}(\mathbb{R}^n)}\sim\sup_{\|g\|_{L^{p',q'}(\mathbb{R}^n)}\leq 1}\int_{\mathbb{R}^n} f(x)\overline{g(x)}dx.
\end{equation}
\end{prop}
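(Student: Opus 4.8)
The three claims are classical facts about Lorentz spaces; the plan is to reduce all of them to one-dimensional inequalities via the decreasing rearrangement. First I would record, for a measurable $f$, the distribution function $d_f(\lambda)=|\{x:|f(x)|>\lambda\}|$ and the decreasing rearrangement $f^*(t)=\inf\{\lambda>0:d_f(\lambda)\le t\}$, and prove (by the layer-cake/Fubini identity, using $\{t:f^*(t)>\lambda\}=[0,d_f(\lambda))$) the equivalent formula
\[
\|f\|_{L^{p,q}(\mathbb{R}^n)}=\Big(\int_0^\infty\big(t^{1/p}f^*(t)\big)^q\,\tfrac{dt}{t}\Big)^{1/q}\quad(q<\infty),\qquad \|f\|_{L^{p,\infty}(\mathbb{R}^n)}=\sup_{t>0}t^{1/p}f^*(t).
\]
Everything below then works with this formula and with the elementary facts that $f^*$ is nonincreasing and is equimeasurable with $|f|$.

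For (i), I would first prove the endpoint inclusion $L^{p,q_1}\hookrightarrow L^{p,\infty}$: since $f^*$ is nonincreasing, $\int_0^t\big(s^{1/p}f^*(s)\big)^{q_1}\tfrac{ds}{s}\ge f^*(t)^{q_1}\int_0^t s^{q_1/p}\tfrac{ds}{s}=\tfrac{p}{q_1}\big(t^{1/p}f^*(t)\big)^{q_1}$, hence $t^{1/p}f^*(t)\lesssim\|f\|_{L^{p,q_1}}$ uniformly in $t$. For $q_1<q_2<\infty$ I would factor $\big(t^{1/p}f^*(t)\big)^{q_2}=\big(t^{1/p}f^*(t)\big)^{q_2-q_1}\big(t^{1/p}f^*(t)\big)^{q_1}$, bound the first factor by $\|f\|_{L^{p,\infty}}^{q_2-q_1}\lesssim\|f\|_{L^{p,q_1}}^{q_2-q_1}$, and integrate the rest. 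For (ii), the key lemma is the submultiplicativity $(fg)^*(t_1+t_2)\le f^*(t_1)g^*(t_2)$, immediate from $\{|fg|>f^*(t_1)g^*(t_2)\}\subset\{|f|>f^*(t_1)\}\cup\{|g|>g^*(t_2)\}$ together with subadditivity of Lebesgue measure; in particular $(fg)^*(2t)\le f^*(t)g^*(t)$. Substituting this into the rearrangement formula, changing variables $t\mapsto 2t$ (a harmless factor $2^{1/p}$ appears, and $\tfrac1p=\tfrac1{p_1}+\tfrac1{p_2}$ is exactly what splits the powers of $t$ as $t^{1/p_1}\cdot t^{1/p_2}$), and applying H\"older on $((0,\infty),\tfrac{dt}{t})$ with exponents $q_1/q$ and $q_2/q$ --- conjugate precisely because $\tfrac1{q_1}+\tfrac1{q_2}=\tfrac1q$ --- yields $\|fg\|_{L^{p,q}}\lesssim\|f\|_{L^{p_1,q_1}}\|g\|_{L^{p_2,q_2}}$; when some $q_i=\infty$ the corresponding H\"older factor is replaced by the pointwise bound $s^{1/p_i}f^*(s)\le\|f\|_{L^{p_i,\infty}}$.

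For (iii) I would argue in the range $1<p<\infty$, which is the only one used in this paper (the diagonal endpoints $p=q\in\{1,\infty\}$ are the classical $L^1$ and $L^\infty$ dualities). One inequality: by the Hardy--Littlewood rearrangement inequality $\int_{\mathbb{R}^n}|f\bar g|\,dx\le\int_0^\infty f^*(t)g^*(t)\,dt$, then writing $f^*g^*=\big(t^{1/p}f^*\big)\big(t^{1/p'}g^*\big)$ and using ordinary H\"older on $((0,\infty),\tfrac{dt}{t})$ with exponents $q,q'$ (here $\tfrac1p+\tfrac1{p'}=1$), one gets $\int_{\mathbb{R}^n}f\bar g\le\|f\|_{L^{p,q}}\|g\|_{L^{p',q'}}$, so $\sup_{\|g\|_{L^{p',q'}}\le1}\int f\bar g\le\|f\|_{L^{p,q}}$. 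For the reverse, after replacing $g$ by $g$ times the complex sign of $f$ it suffices to lower-bound $\sup\{\int_{\mathbb{R}^n}fh\,dx:h\ge0,\ \|h\|_{L^{p',q'}}\le1\}$, so assume $f,h\ge0$. I would take $g$ on $\mathbb{R}^n$ whose superlevel sets are nested inside those of $f$ (``co-monotone'' with $f$) and whose decreasing rearrangement is a multiple of $\big(t^{q/p-1}f^*(t)^{q-1}\big)^*$ (for $q=\infty$ one uses instead a normalized indicator $c\,\mathbf{1}_{[0,t_0)}$ at a near-maximizer $t_0$ of $t^{1/p}f^*(t)$). Co-monotonicity makes Hardy--Littlewood an equality, so $\int_{\mathbb{R}^n}fg=\int_0^\infty f^*g^*\ge\int_0^\infty t^{q/p-1}f^*(t)^q\,dt=\|f\|_{L^{p,q}}^q$, while a direct computation (using $\tfrac1p+\tfrac1{p'}=1$ and $\tfrac q{q'}=q-1$) identifies the unrearranged weighted $L^{q'}$-norm of that profile with $\|f\|_{L^{p,q}}^{q-1}$; granting $\|g\|_{L^{p',q'}}\lesssim\|f\|_{L^{p,q}}^{q-1}$ one concludes $\int fg\gtrsim\|f\|_{L^{p,q}}\|g\|_{L^{p',q'}}$.

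The main obstacle I anticipate is exactly that last bound $\|g\|_{L^{p',q'}}\lesssim\|f\|_{L^{p,q}}^{q-1}$, i.e.\ controlling the Lorentz norm of the rearranged extremal profile by the unrearranged weighted $L^{q'}$-norm of $t^{q/p-1}f^*(t)^{q-1}$. When $q\le p$ this profile is already nonincreasing and there is nothing to prove; when $q>p$ it need not be monotone --- it is the product of the increasing power $t^{q/p-1}$ and the nonincreasing factor $f^*(t)^{q-1}$, both supported near the origin --- so one must verify that rearranging it does not inflate the weighted norm more than a $p,q$-dependent constant. This is where the hypothesis $1<p<\infty$ genuinely enters (the rearrangement map is not bounded on $L^{q'}(t^{q'/p'-1}\,dt)$ for arbitrary inputs, so the special structure of the profile must be used; equivalently one may quote the standard identification, for $1<p<\infty$, of $L^{p',q'}$ as the associate space of $L^{p,q}$). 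The remaining ingredient --- the measure-theoretic construction of a genuinely co-monotone $g$ on $\mathbb{R}^n$ with the prescribed decreasing rearrangement --- is elementary for simple $f$ and follows in general by monotone approximation.
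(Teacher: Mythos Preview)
The paper does not prove this proposition at all: it is stated in Appendix~A as a summary of well-known properties of Lorentz spaces, included ``for the reader's convenience,'' with no argument given. So there is nothing in the paper to compare your attempt against.

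Your sketch is the standard route and is essentially correct for (i) and (ii). For (iii), the forward inequality via Hardy--Littlewood is fine. For the reverse inequality you have correctly located the only genuine difficulty: when $q>p$ the profile $t\mapsto t^{q/p-1}f^*(t)^{q-1}$ need not be nonincreasing, and controlling the Lorentz norm of its rearrangement is not automatic. Your proposed fix --- exploiting the special structure of the profile, or equivalently quoting the associate-space identification --- is where the work actually hides, and as written this step is only gestured at rather than carried out. The cleaner textbook argument bypasses this issue entirely: for $1<p<\infty$ one replaces $\|\cdot\|_{L^{p,q}}$ by the equivalent norm built from the maximal rearrangement $f^{**}(t)=\tfrac{1}{t}\int_0^t f^*(s)\,ds$ (equivalence being Hardy's inequality, which is exactly where $p>1$ enters), and then the dual extremizer is monotone by construction, so the problematic rearrangement step disappears. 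If you intend to give a self-contained proof rather than a citation, I would recommend rewriting the reverse direction along those lines.
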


The Lorentz spaces are particularly useful to interpolate boundedness of an operator, since its original definition stems from interpolation of the Lebesgue spaces.
\begin{prop}[Marcinkeiwicz interpolation]
Let $1\leq p_0,p_1,q_0,q_1\leq\infty$. Suppose that $T$ is sublinear, i.e.,
\begin{equation}
|T(cf)|\leq |c||Tf|\quad\textup{and }|T(f+g)|\leq |Tf|+|Tg|,
\end{equation} 
and that
\begin{equation}
\|Tf\|_{L^{q_i,\infty}(\mathbb{R}^n)}\lesssim \|f\|_{L^{p_i,1}(\mathbb{R}^n)}\quad\textup{for }i=0,1.
\end{equation}
Then, for any $0<\theta<1$ and $1\leq r<\infty$, we have
\begin{equation}
\|Tf\|_{L^{q_\theta,r}(\mathbb{R}^n)}\lesssim \|f\|_{L^{p_\theta,r}(\mathbb{R}^n)},
\end{equation}
where
$\frac{1}{p_\theta}=\frac{1-\theta}{p_0}+\frac{\theta}{p_1}$ and $\frac{1}{q_\theta}=\frac{1-\theta}{q_0}+\frac{\theta}{q_1}$.
\end{prop}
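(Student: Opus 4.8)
The plan is to run the classical real-interpolation (layer-cake) argument in the Lorentz scale, so that the only inputs beyond the definition of the Lorentz norms (recorded in the appendix) are real-variable manipulations of the decreasing rearrangement $f^{*}$ of $f$. After relabelling we may assume $p_0<p_1$ (the equal-exponent case is handled by the same decomposition), and since $|T(cf)|=c|Tf|$ a.e.\ for $c>0$ (apply sublinearity to $cf$ and to $c^{-1}(cf)$), we may normalize $\|f\|_{L^{p_\theta,r}(\mathbb{R}^n)}=1$.

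First I would introduce, for each level $\lambda>0$, a splitting $f=f^{\lambda}+f_{\lambda}$, where $f^{\lambda}:=f\,\mathbf{1}_{\{|f|>\sigma(\lambda)\}}$ collects the large values and $f_{\lambda}:=f-f^{\lambda}$ the small ones, with a truncation height $\sigma(\lambda)$ depending on $\lambda$ through a power law to be pinned down at the end. Since $p_0<p_1$, one has $f^{\lambda}\in L^{p_0,1}$ and $f_{\lambda}\in L^{p_1,1}$, with explicit control of their Lorentz norms by truncated integrals of $f^{*}$. Sublinearity of $T$ then gives the level-set inclusion
\[
\{\,|Tf|>\lambda\,\}\subseteq\{\,|Tf^{\lambda}|>\tfrac{\lambda}{2}\,\}\cup\{\,|Tf_{\lambda}|>\tfrac{\lambda}{2}\,\},
\]
and the two weak-type hypotheses yield
\[
\big|\{\,|Tf^{\lambda}|>\tfrac{\lambda}{2}\,\}\big|\lesssim\Big(\frac{\|f^{\lambda}\|_{L^{p_0,1}}}{\lambda}\Big)^{q_0},\qquad
\big|\{\,|Tf_{\lambda}|>\tfrac{\lambda}{2}\,\}\big|\lesssim\Big(\frac{\|f_{\lambda}\|_{L^{p_1,1}}}{\lambda}\Big)^{q_1}.
\]

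Next I would compute $\|Tf\|_{L^{q_\theta,r}}$ directly from its definition, essentially $\big(\int_0^\infty\lambda^{r-1}\,|\{|Tf|>\lambda\}|^{r/q_\theta}\,d\lambda\big)^{1/r}$, insert the two bounds above, and re-express $\|f^{\lambda}\|_{L^{p_0,1}}$ and $\|f_{\lambda}\|_{L^{p_1,1}}$ through $f^{*}$. Choosing the exponent in $\sigma(\lambda)$ so that the scaling relation $\frac{1}{q_\theta}=\frac{1-\theta}{q_0}+\frac{\theta}{q_1}$ is matched makes the two contributions combine, after a change of variables, into a one-dimensional weighted Hardy-type inequality for $f^{*}$; carrying it out bounds the whole quantity by a constant multiple of $\big(\int_0^\infty (t^{1/p_\theta}f^{*}(t))^{r}\,\tfrac{dt}{t}\big)^{1/r}\sim\|f\|_{L^{p_\theta,r}(\mathbb{R}^n)}=1$, which is the assertion.

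The main obstacle, and essentially the only technical content, is the exponent bookkeeping: the $\lambda$-dependent truncation level must be chosen so that \emph{both} contributions to the $L^{q_\theta,r}$-integral collapse onto the single Lorentz norm $\|f\|_{L^{p_\theta,r}}$, which forces one to track the relations among $p_0,p_1,q_0,q_1,\theta,r$ carefully through the change of variables, and then the resulting weighted Hardy estimate must be verified -- this is the only place where the hypothesis $r<\infty$ enters. Sublinearity of $T$ is used exactly once, in the displayed level-set inclusion; everything else is real-variable estimation of rearrangements.
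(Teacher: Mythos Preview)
The paper does not prove this proposition; it is recorded in the appendix as a standard result, with no argument given. Your outline is the classical layer-cake/real-interpolation proof of Marcinkiewicz interpolation in the Lorentz scale, and as a plan it is correct: split $f$ at a $\lambda$-dependent height, feed the two pieces into the respective weak-type bounds, and then collapse the resulting $\lambda$-integral onto $\|f\|_{L^{p_\theta,r}}$ via a Hardy-type inequality for $f^{*}$. There is thus nothing in the paper to compare your argument against.

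One small caveat worth flagging: for the choice of the truncation exponent $\sigma(\lambda)$ to be non-degenerate and for the subsequent change of variables to make sense, one typically needs $q_0\neq q_1$ (and $p_0\neq p_1$), a hypothesis the paper's statement does not explicitly impose; your parenthetical about the equal-exponent case glosses over this. In the paper's actual application (deriving the fractional integration inequality in Lorentz spaces), the endpoint exponents are distinct, so this is harmless there, but if you intend a fully self-contained proof of the proposition as stated you should either add that hypothesis or indicate how the degenerate case is handled.
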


As a consequence, we obtain the fractional integration inequality, equivalently Sobolev inequality, in the Lorentz spaces.
\begin{prop}[Fractional integration inequality] $(i)$ If $1<p,q<\infty$, $1\leq r<\infty$ and $\frac{1}{q}=\frac{1}{p}-\frac{s}{n}$, then
\begin{equation}
\Big\|\int_{\mathbb{R}^n}\frac{f(y)}{|x-y|^{n-s}}dy\Big\|_{L^{q,r}(\mathbb{R}^n)}\lesssim \|f\|_{L^{p,r}(\mathbb{R}^n)}.
\end{equation}
$(ii)$ (Endpoint cases)
\begin{equation}
\begin{aligned}
\Big\|\int_{\mathbb{R}^n}\frac{f(y)}{|x-y|^{n-s}}dy\Big\|_{L^{\frac{n}{n-s},\infty}(\mathbb{R}^n)}&\lesssim \|f\|_{L^1(\mathbb{R}^n)},\\
\Big\|\int_{\mathbb{R}^n}\frac{f(y)}{|x-y|^{n-s}}dy\Big\|_{L^{\infty}(\mathbb{R}^n)}&\lesssim \|f\|_{L^{\frac{n}{s},1}(\mathbb{R}^n)}.
\end{aligned}
\end{equation}
\end{prop}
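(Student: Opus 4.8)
The plan is to view $I_s f(x):=\int_{\mathbb{R}^n}|x-y|^{-(n-s)}f(y)\,dy$ as convolution against the kernel $K(x):=|x|^{-(n-s)}$ and to exploit the single structural fact that $K$ lies exactly in the weak space $L^{n/(n-s),\infty}(\mathbb{R}^n)$: since $|\{x:|x|^{-(n-s)}>\lambda\}|=|\{|x|<\lambda^{-1/(n-s)}\}|\sim\lambda^{-n/(n-s)}$, the norm $\|K\|_{L^{n/(n-s),\infty}}$ is a finite dimensional constant $c_n$. With this in hand, the second endpoint of $(ii)$ is immediate: by the H\"older inequality in the Lorentz spaces with exponents $\big(\tfrac{n}{s},1\big)$ and $\big(\tfrac{n}{n-s},\infty\big)$ (whose reciprocals sum to $1$ in both slots, with target $L^{1,1}=L^1$), for each fixed $x$ one has $\int_{\mathbb{R}^n}|K(x-y)||f(y)|\,dy=\|K(x-\cdot)f\|_{L^1}\lesssim\|K(x-\cdot)\|_{L^{n/(n-s),\infty}}\|f\|_{L^{n/s,1}}=c_n\|f\|_{L^{n/s,1}}$, and taking the supremum over $x$ gives $\|I_s f\|_{L^\infty}\lesssim\|f\|_{L^{n/s,1}}$. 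Throughout the rest of the argument we may assume $f\ge 0$, since $K\ge 0$ gives $|I_s f|\le I_s|f|$.

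Next I would establish weak-type bounds by the classical kernel-truncation argument. Fix $1\le p<\tfrac ns$ and set $\tfrac1q=\tfrac1p-\tfrac sn\in(0,1]$. Split $K=K\mathbf 1_{\{|x|\le R\}}+K\mathbf 1_{\{|x|>R\}}=:K_R^{\mathrm{in}}+K_R^{\mathrm{out}}$. By Young's inequality, $\|K_R^{\mathrm{in}}*f\|_{L^p}\le\|K_R^{\mathrm{in}}\|_{L^1}\|f\|_{L^p}\lesssim R^{s}\|f\|_{L^p}$, while $\|K_R^{\mathrm{out}}*f\|_{L^\infty}\le\|K_R^{\mathrm{out}}\|_{L^{p'}}\|f\|_{L^p}\lesssim R^{\,s-n/p}\|f\|_{L^p}$, the $L^{p'}$-norm of $K_R^{\mathrm{out}}$ being finite precisely because $\tfrac1q>0$ (and for $p=1$ the second factor is $\|K_R^{\mathrm{out}}\|_{L^\infty}=R^{-(n-s)}$, matching $s-n/p=s-n$). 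Normalizing $\|f\|_{L^p}=1$ and choosing $R=R(\lambda)\sim\lambda^{-1/(n/p-s)}$ so that the $L^\infty$-term is $\le\lambda/2$ annihilates the contribution of $K_R^{\mathrm{out}}$ to the level set $\{|I_s f|>\lambda\}$, and Chebyshev's inequality applied to the $K_R^{\mathrm{in}}$-term gives, after inserting this $R$, $|\{|I_s f|>\lambda\}|\lesssim\lambda^{-p}R^{sp}\sim\lambda^{-q}$. Hence $I_s:L^p\to L^{q,\infty}$; specializing to $p=1$ yields the first endpoint of $(ii)$, and, since $L^{p,1}\hookrightarrow L^p$, we also get $I_s:L^{p_i,1}\to L^{q_i,\infty}$ for every pair on the line $\tfrac1q=\tfrac1p-\tfrac sn$.

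Finally, for part $(i)$, given $1<p,q<\infty$ with $\tfrac1q=\tfrac1p-\tfrac sn$ (so automatically $p\in(1,n/s)$), I would pick $p_0<p<p_1$ in $(1,n/s)$ with the matching $q_0,q_1$ and write $p=p_\theta,\ q=q_\theta$ for a suitable $\theta\in(0,1)$; then the Marcinkiewicz interpolation theorem from the appendix, applied to the sublinear operator $f\mapsto I_s|f|$ with the weak-type endpoint bounds just obtained, yields $\|I_s f\|_{L^{q,r}}\lesssim\|f\|_{L^{p,r}}$ for every $1\le r<\infty$, which is exactly $(i)$.

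The step I expect to be the main obstacle is the exponent bookkeeping in the truncation argument: keeping the exponents $s$ and $s-n/p$ straight, selecting the optimal radius $R\sim\lambda^{-1/(n/p-s)}$, and checking that the level-set estimate closes to precisely $\lambda^{-q}$ uniformly down to the endpoint $p=1$. By contrast, the degenerate slot $q=\infty$ appearing in $(ii)$ must be, and is, handled separately by Lorentz--H\"older rather than through the interpolation scheme.
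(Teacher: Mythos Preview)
Your proof is correct. Both you and the paper establish the second endpoint of $(ii)$ by the Lorentz--H\"older inequality, using that the kernel $|x|^{-(n-s)}$ lies in $L^{n/(n-s),\infty}$, and both obtain $(i)$ from the Marcinkiewicz interpolation theorem stated in the appendix. The genuine difference is how the first endpoint $I_s:L^1\to L^{n/(n-s),\infty}$ is reached. You run the classical kernel-truncation argument (split $K=K_R^{\mathrm{in}}+K_R^{\mathrm{out}}$, apply Young and Chebyshev, optimize over $R$), which in fact delivers the whole weak-type scale $I_s:L^p\to L^{q,\infty}$ for $1\le p<n/s$ and then lets you interpolate between two interior points $p_0<p<p_1$. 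The paper instead derives the first endpoint from the second in one line by duality: since $I_s$ is self-adjoint, the dual characterization of $L^{n/(n-s),\infty}$ gives
\[
\|I_s f\|_{L^{n/(n-s),\infty}}\sim\sup_{\|g\|_{L^{n/s,1}}\le 1}\int (I_s f)\,g=\sup\int f\,(I_s g)\le\|f\|_{L^1}\sup\|I_s g\|_{L^\infty}\lesssim\|f\|_{L^1},
\]
and then interpolates directly between the two extreme endpoints $p_0=1$ and $p_1=n/s$. The paper's route is shorter but leans on the dual characterization of Lorentz norms; your truncation argument is more self-contained and yields the intermediate weak bounds as a byproduct, at the cost of the exponent bookkeeping you correctly flagged as the main thing to keep straight.
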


\begin{proof}
By the H\"older inequality, we prove the second inequality in the endpoint cases,
\begin{equation}
\begin{aligned}
\Big\|\int_{\mathbb{R}^n}\frac{f(y)}{|x-y|^{n-s}}dy\Big\|_{L^{\infty}(\mathbb{R}^n)}&\lesssim \Big\|\frac{1}{|x-y|^{n-s}}\Big\|_{L^{\frac{n}{n-s},\infty}(\mathbb{R}^n)}\|f\|_{L^{\frac{n}{s},1}(\mathbb{R}^n)}\lesssim\|f\|_{L^{\frac{n}{s},1}(\mathbb{R}^n)}.
\end{aligned}
\end{equation}
Then, the first endpoint case inequality follows from the dual characterization and $(i)$ follows from the interpolation theorem.
\end{proof}

\section{Fractional Leibniz rule}

The fractional Leibniz rule is a useful tool in estimating nonlinear terms.

\begin{thm}[Fractional Leibniz rule \cite{ChWei}]\label{fractional leibniz}
Suppose that $s>0$, $1<r, q_1, p_2<\infty$, $1<p_1, q_2\leq\infty$ and $\frac{1}{r}=\frac{1}{p_1}+\frac{1}{q_1}=\frac{1}{p_2}+\frac{1}{q_2}$. Then,
$$\||\nabla|^s(fg)\|_{L^r(\mathbb{R}^n)}\lesssim \|f\|_{L^{p_1}(\mathbb{R}^n)}\||\nabla|^sg\|_{L^{q_1}(\mathbb{R}^n)}+\||\nabla|^sf\|_{L^{p_2}(\mathbb{R}^n)}\|g\|_{L^{q_2}(\mathbb{R}^n)}.$$
\end{thm}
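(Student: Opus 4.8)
The plan is to run the standard Littlewood--Paley paraproduct argument. Fix a homogeneous Littlewood--Paley partition of unity, writing $P_j$ for the Fourier projection onto the annulus $|\xi|\sim 2^j$ and $S_j=\sum_{k\le j}P_k$ for the projection onto $|\xi|\lesssim 2^j$. Bony's decomposition splits the product into three pieces,
\[
fg=\sum_j (P_j f)(S_{j-3} g)+\sum_j (S_{j-3} f)(P_j g)+\sum_{|j-k|\le 2}(P_j f)(P_k g)=:\Pi_1+\Pi_2+\Pi_3,
\]
and I would bound $\||\nabla|^s\Pi_i\|_{L^r}$ for each $i$ in turn. The main tools are, for $1<r<\infty$, the square-function characterization $\||\nabla|^s h\|_{L^r}\sim\big\|\big(\sum_j 2^{2js}|P_j h|^2\big)^{1/2}\big\|_{L^r}$, the Fefferman--Stein vector-valued maximal inequality, Bernstein's inequality, and H\"older's inequality; these are precisely the reasons the hypotheses require $1<r,q_1,p_2<\infty$ while allowing the ``outer'' exponents $p_1,q_2$ to be $\infty$.

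For the low--high term $\Pi_2=\sum_j (S_{j-3} f)(P_j g)$ (and symmetrically $\Pi_1$, with the roles of $(p_1,q_1)$ played by $(p_2,q_2)$), each summand is frequency-localized in $|\xi|\sim 2^j$, so the square-function bound applies with the weight $2^{js}$ sitting on the $j$-th block; estimating $|S_{j-3}f(x)|\lesssim Mf(x)$ pointwise and using Cauchy--Schwarz in $j$ gives
\[
\||\nabla|^s\Pi_2\|_{L^r}\lesssim\Big\|(Mf)\Big(\sum_j 2^{2js}|P_j g|^2\Big)^{1/2}\Big\|_{L^r}\lesssim\|Mf\|_{L^{p_1}}\Big\|\Big(\sum_j 2^{2js}|P_j g|^2\Big)^{1/2}\Big\|_{L^{q_1}},
\]
and then the maximal theorem together with the square-function characterization of $\||\nabla|^sg\|_{L^{q_1}}$ finishes it; when $p_1=\infty$ one simply replaces $Mf$ by the crude bound $\|S_{j-3}f\|_{L^\infty}\lesssim\|f\|_{L^\infty}$.

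The main obstacle is the high--high (resonant) term $\Pi_3=\sum_{|j-k|\le 2}(P_j f)(P_k g)$: the product of two comparable-frequency blocks at scale $2^j$ can be supported at any frequency $\lesssim 2^j$, so the heuristic that ``the derivative falls on the larger factor'' is not directly available and the summands fail to be almost-orthogonal. I would resolve this by applying a further Littlewood--Paley projection $P_k$ to the whole sum, writing $P_k\Pi_3=\sum_{j\ge k-3}P_k\big((P_j f)(\tilde P_j g)\big)$, and exploiting the frequency gain: since $P_k$ localizes to $|\xi|\sim 2^k$ with $2^k\ll 2^j$, Bernstein's inequality yields a factor $2^{(k-j)\delta}$ with some $\delta>0$, which makes the $j$-sum summable and lets one control $\big(\sum_k 2^{2ks}|P_k\Pi_3|^2\big)^{1/2}$ in $L^r$ by $\|Mf\|_{L^{p_1}}\,\|(\sum_j 2^{2js}|P_j g|^2)^{1/2}\|_{L^{q_1}}$ (or the $(p_2,q_2)$ version), again via the Fefferman--Stein inequality and H\"older; an equivalent route is to argue by duality against $h\in L^{r'}$, transferring $|\nabla|^s$ onto $h$. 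Collecting the three contributions and recording the bounds yields $\||\nabla|^s(fg)\|_{L^r}\lesssim\|f\|_{L^{p_1}}\||\nabla|^sg\|_{L^{q_1}}+\||\nabla|^sf\|_{L^{p_2}}\|g\|_{L^{q_2}}$, which is the Kato--Ponce inequality in the stated form; in the body of the paper it is simply quoted from \cite{ChWei}.
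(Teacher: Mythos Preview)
The paper does not actually prove this theorem: it cites \cite{ChWei} and only remarks that the original restrictions ($n=1$, $0<s<1$, all exponents finite) can be dropped, the endpoint $p_1=q_2=\infty$ being handled by the trivial bound $\|Mf\|_{L^\infty}\lesssim\|f\|_{L^\infty}$. Your paraproduct sketch is precisely the Christ--Weinstein argument the paper is invoking, and your treatment of the $L^\infty$ endpoint matches the paper's remark verbatim, so you are in complete agreement with the paper. One small imprecision: in the high--high piece the summability factor $2^{(k-j)s}$ for $j\ge k$ comes simply from rewriting $2^{ks}=2^{(k-j)s}2^{js}$ and using $s>0$, not from Bernstein's inequality; otherwise the outline is correct.
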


\begin{rem}
Theorem \ref{fractional leibniz} is a simple extension of the original version in Chirst-Weinstein \cite[Proposition 3.3]{ChWei}, where it is assumed that $n=1$, $0<s<1$ and $1<p_1, q_1, p_2, q_2<\infty$. Indeed, their proof works for all $n\geq 1$ and $s>0$ with no change. Furthermore, one can include the case $p_1=q_2=\infty$ using the boundedness of the maximal function, $\|Mf\|_{L^\infty}\lesssim\|f\|_{L^\infty}$, in the very last step of the proof. 
\end{rem}


\begin{thebibliography}{20}

\bibitem{BMS} P. Bechouche, N. J. Mauser, S. Selberg, Nonrelativistic limit of Klein-Gordon-Maxwell to Schr\"odinger-Poisson. Amer. J. Math. \textbf{126} (2004), no. 1, 31--64.

\bibitem{BMS2} P. Bechouche, N. J. Mauser, S. Selberg, On the asymptotic analysis of the Dirac-Maxwell system in the nonrelativistic limit. J. Hyperbolic Differ. Equ. \textbf{2} (2005), no. 1, 129--182. 

\bibitem{BL} H. Berestycki, P.-L. Lions, Nonlinear scalar field equations. I. Existence of a ground state. Arch. Rational Mech. Anal. \textbf{82}  (1983), no. 4, 313--345.

\bibitem{CLM} R. Carles, W. Lucha, E. Moulay, Higher-order Schr\"odinger and Hartree-Fock equations. J. Math. Phys. \textbf{56} (2015), no. 12, 122301, 17 pp.

\bibitem{CS} W. Choi, J. Seok, Nonrelativistic limit of standing waves for pseudo-relativistic nonlinear Schr\"odinger equations. J. Math. Phys. \textbf{57} (2016), no. 2, 021510, 15 pp.

\bibitem{ChWei} F. M. Christ, M. I. Weinstein, Dispersion of small amplitude solutions of the generalized Korteweg-de Vries equation. J. Funct. Anal. \textbf{100} (1991), no. 1, 87-109. 



\bibitem{CN2} V. Coti-Zelati, M. Nolasco, Existence of ground states for nonlinear, pseudo-relativistic Schr\"odinger equations. Atti Accad. Naz. Lincei Cl. Sci. Fis. Mat. Natur. Rend. Lincei (9) Mat. Appl. \textbf{22} (2011), 51--72.


\bibitem{ES} A. Elgart, B. Schlein, Mean field dynamics of boson stars, Comm. Pure Appl. Math. \textbf{60} (2007), 500--545.

\bibitem{EsSe} M. J. Esteban, E. S\'er\'e, E. Nonrelativistic limit of the Dirac-Fock equations. Ann. Henri Poincar\'e 2 (2001), no. 5, 941--961.

\bibitem{FJL} J. Fr\"ohlich, B. L. G. Jonsson, E. Lenzmann, Boson stars as solitary waves. Comm. Math. Phys. \textbf{274} (2007), no. 1, 1--30.

\bibitem{FL1} J. Fr\"ohlich, E. Lenzmann, Dynamical collapse of white dwarfs in Hartree- and Hartree-Fock theory. Comm. Math. Phys. \textbf{274} (2007), no. 3, 737--750.

\bibitem{FL2} J. Fr\"ohlich, E. Lenzmann, Blowup for nonlinear wave equations describing boson stars. Comm. Pure Appl. Math. \textbf{60} (2007), no. 11, 1691--1705.



\bibitem{HeLe} S. Herr, E. Lenzmann, The Boson star equation with initial data of low regularity. Nonlinear Anal. \textbf{97} (2014), 125--137.

\bibitem{KnPi} A. Knowles, P. Pickl, Mean-field dynamics: singular potentials and rate of convergence. Comm. Math. Phys. \textbf{298} (2010), no. 1, 101--138. 

\bibitem{Kw} M. K. Kwong, Uniqueness of positive solutions of $\Delta u - u + u^p =0$ in $\mathbb{R}^n$.  Arch. Rational Mech. Anal. \textbf{105} (1989), 243--266.

\bibitem{JLee} J. O. Lee, Rate of convergence towards semi-relativistic Hartree dynamics. Ann. Henri Poincar\'e \textbf{14} (2013), no. 2, 313--346. 

\bibitem{L1} E. Lenzmann, Well-posedness for semi-relativistic Hartree equations of critical type. Math. Phys. Anal. Geom. \textbf{10} (2007), no. 1, 43--64.

\bibitem{L2} E. Lenzmann, Uniqueness of ground states for pseudo-relativistic Hartree equations. Anal. PDE \textbf{2} (2009),  1--27.

\bibitem{LeMa} E. Lenzmann, M. Lewin, On singularity formation for the $L^2$-critical Boson star equation. Nonlinearity \textbf{24} (2011), no. 12, 3515--3540.

\bibitem{Lieb} E. H. Lieb, Existence and uniqueness of the minimizing solution of Choquard's nonlinear equation. 
Studies in Appl. Math.  \textbf{57} (1976/77), no. 2, 93–-105. 


\bibitem{LT} E. H. Lieb, W. E. Thirring, Gravitational collapse in quantum mechanics with relativistic kinetic energy. Ann. Physics \textbf{155} (1984), 494-–512.

 \bibitem{LY} E. H. Lieb, H. T. Yau, The Chandrasekhar theory of stellar collapse as the limit of quantum mechanics. Comm. Math. Phys. \textbf{112} (1987), 147--174.

\bibitem{L} P. L. Lions, The concentration-compactness principle in the calculus of variations. The locally compact case. I. Ann. Inst. H. Poincar\'e Anal. Non Linéaire 1 (1984), 109--145.

\bibitem{MNO} S. Machihara, K. Nakanishi, T. Ozawa, Nonrelativistic limit in the energy space for nonlinear Klein-Gordon equations. Math. Ann. \textbf{322} (2002),  603--621.

\bibitem{MNO2} S. Machihara, K. Nakanishi, T. Ozawa, Small global solutions and the nonrelativistic limit for the nonlinear Dirac equation. Rev. Mat. Iberoamericana \textbf{19} (2003), no. 1, 179--194. 

\bibitem{MN} N. Masmoudi, K. Nakanishi, From nonlinear Klein-Gordon equation to a system of coupled nonlinear Schr\"odinger equations. Math. Ann. \textbf{324} (2002),  359--389.

\bibitem{MN2} N. Masmoudi, K. Nakanishi, Nonrelativistic limit from Maxwell-Klein-Gordon and Maxwell-Dirac to Poisson-Schr�dinger. Int. Math. Res. Not. 2003, no. 13, 697--734.

\bibitem{MN3} N. Masmoudi, K. Nakanishi, Energy convergence for singular limits of Zakharov type systems. Invent. Math. \textbf{172} (2008), no. 3, 535--583.

\bibitem{MN4} N. Masmoudi, K. Nakanishi, From the Klein-Gordon-Zakharov system to a singular nonlinear Schr�dinger system. Ann. Inst. H. Poincar\'e Anal. Non Lin\'eaire \textbf{27} (2010), no. 4, 1073--1096.

\bibitem{M} D. Mugnai, Pseudorelativistic Hartree equation with general nonlinearity: existence, non-existence and variational identities, Adv. Nonlinear Stud.  \textbf{13}  (2013),  799--823.

\bibitem{Nakanishi} K. Nakanishi, Nonrelativistic limit of scattering theory for nonlinear Klein-Gordon equations. J. Differential Equations 180 (2002), no. 2, 453--470.

\bibitem{Pusateri} F. Pusateri, Modified scattering for the boson star equation. Comm. Math. Phys. \textbf{332} (2014), no. 3, 1203--1234.

\bibitem{Bethe-Salpeter}E. E. Salpeter, H. A. Bethe, A Relativistic Equation for Bound-State Problems, Phys. Rev. \textbf{84} (1951), no. 6, 1232


\bibitem{TWY} J. Tan, Y. Wang, J. Yang, Nonlinear fractional field equations. Nonlinear Anal. \textbf{75} (2012), 2098--2110.

\bibitem{Ts} M. Tsutsumi, Nonrelativistic approximation of nonlinear Klein-Gordon equations in two space dimensions. Nonlinear Anal. \textbf{8} (1984), 637--643.

wave propagation, Comm. Partial Differential Equations, \textbf{12} (1987), 1133--1173.
\end{thebibliography}
\end{document}